\newcommand{\Hy}{\mathcal{H}}
\newcommand{\C}{{\mathbb{C}}}
\newcommand{\R}{{\mathbb{R}}}
\newcommand{\Z}{{\mathbb{Z}}}
\newcommand{\N}{{\mathbb{N}}}
\newcommand{\F}{{\mathbb{F}}}
\newcommand{\rationals}{{\mathbb{Q}}}
\newcommand{\Ball}{\mathcal{B}}
\newcommand{\id}{\mathrm{id}}
\newcommand*{\Homol}{\operatorname{H}}
\newcommand*{\SL}{\operatorname{SL}}
\newcommand*{\PSL}{\operatorname{PSL}}
\renewcommand{\le}{\leqslant}
\renewcommand{\ge}{\geqslant}
\renewcommand{\geq}{\geqslant}
\theoremstyle{plain}
\newtheorem{thm}{\bfseries Theorem}
\newtheorem{Lem}[thm]{\bfseries Lemma}
\newtheorem{prop}[thm]{\bfseries Proposition}
\newtheorem{cor}[thm]{\bfseries Corollary}
\newtheorem{df}[thm]{\bfseries Definition}
\theoremstyle{remark}
\newtheorem{obs}[thm]{\bfseries Observation}
\newtheorem{rem}[thm]{\bfseries Remark}
\title[Integral homology of $\PSL_2$ of imaginary quadratic integers]{The integral homology of $\PSL_2$ of imaginary quadratic integers with non-trivial class group}
\author[Rahm]{Alexander D. Rahm}
\thanks{The first named author is supported by a research grant of the
Minist\`ere de l'Enseignement Sup\'erieur et de la Recherche, and partially supported by DFH-UFA grant CT-26-07-I and by a \mbox{DAAD (German Academic Exchange Service)} grant. \\
	The second named author was supported by a GK Gruppen und Geometrie \mbox{post-doctoral} fellowship of the DFG}
\address{Institut Fourier, UJF Grenoble and Math. Institut, Universit\"at G\"ottingen}
\email{Alexander.Rahm@ujf-grenoble.fr}
\urladdr{http://www-fourier.ujf-grenoble.fr/~rahm/}
\author[Fuchs]{Mathias Fuchs}
\address{Department of Bioinformatics, Center of Informatics, Statistics and Epidemiology UMG, University of G\"ottingen}
\email{mfu@bioinf.med.uni-goettingen.de}
\urladdr{http://www.bioinf.med.uni-goettingen.de/people/fuchs/}
\subjclass[2000]{ 11F75, Cohomology of arithmetic groups. 22E40, Discrete subgroups of Lie groups. 57S30, Discontinuous groups of transformations.}
\date{\today}
\begin{document}
\begin{abstract}
We show that a cellular complex defined by Fl\"oge allows to determine the integral homology of the \emph{Bianchi groups} $\text{PSL}_2 (\mathcal{O}_{-m})$, where $\mathcal{O}_{-m}$ is the ring of integers of 
an imaginary quadratic number field $\rationals\left[\sqrt{-m}\thinspace\right]$ for a square-free natural number $m$.\\
In the cases of non-trivial class group, we handle the difficulties arising from the cusps associated to the non-trivial ideal classes of $O_{-m}$. We use this to compute in the cases $m = 5, 6, 10, 13$ and $15$ the integral homology of $\text{PSL}_2(\mathcal{O}_{-m})$, which before was known only in the cases $m = 1, 2, 3, 7$ and $11$ with trivial class group.
\end{abstract}
\maketitle

\setcounter{secnumdepth}{3}
\setcounter{tocdepth}{2}
\tableofcontents

\section{Introduction}
The objects of study of this paper are the $\PSL_2$-groups $\Gamma$ of the ring of integers $\mathcal{O}_{-m}:= \mathcal{O}_{\rationals[\sqrt{-m}\thinspace]}$ of an imaginary quadratic number field $\rationals[\sqrt{-m}\thinspace]$, where $m$ is a square-free positive integer. 
We have $\mathcal{O}_{-m} = \Z[\omega]$ with $\omega = \sqrt{-m}$ for $m$ congruent to 1 or 2 modulo 4, and $\omega = -\frac{1}{2}+\frac{1}{2}\sqrt{-m}$ for $m$ congruent to 3 modulo 4. 

\begin{figure} 
\caption{Results in the group homology with simple integer coefficients} \label{resultstable}
$$
\begin{array}{llll}
&\Homol_q(\text{PSL}_2(\mathcal{O}_{-5}); \thinspace \Z)&\cong&
\begin{cases}
 \Z^2 \oplus \Z/3 \oplus (\Z/2)^2, \qquad  &\quad\thinspace\thinspace q = 1,\\
 \Z \oplus \Z/4 \oplus \Z/3 \oplus \Z/2, \qquad &\quad\thinspace\thinspace q=2,\\ 
 \Z/3 \oplus (\Z/2)^q, \qquad &\quad\thinspace\thinspace q\ge 3; 
\end{cases}
\vspace{.5em}
\\
&\Homol_q(\text{PSL}_2(\mathcal{O}_{-10}); \thinspace \Z)&\cong&
\begin{cases}
\Z^3  \oplus (\Z/2)^2, &\quad  q=1,\\
\Z^2 \oplus \Z/4 \oplus \Z/3 \oplus \Z/2, &\quad q=2,\\
\Z/3 \oplus (\Z/2)^{q}, & \quad q \ge 3;
\end{cases}
\vspace{.5em}
\\
&\Homol_q(\text{PSL}_2(\mathcal{O}_{-15}); \thinspace \Z)&\cong&
\begin{cases}
\Z^2 \oplus \Z/3 \oplus \Z/2, &\qquad\qquad q=1,\\
\Z \oplus  \Z/3 \oplus \Z/2, &\qquad\qquad q=2,\\
\Z/3 \oplus \Z/2, &\qquad\qquad  q \ge 3;
\end{cases}
\vspace{.5em}
\\
&\Homol_q(\text{PSL}_2(\mathcal{O}_{-13}); \thinspace \Z)&\cong&
\begin{cases}
\Z^3 \oplus (\Z/2)^2, &  q= 1, \\
\Z^2 \oplus \Z/4 \oplus (\Z/3)^2 \oplus \Z/2, & q = 2, \\
(\Z/2)^{q} \oplus (\Z/3)^2, & q = 4k+3, \quad k \ge 0, \\
(\Z/2)^q, & q = 4k+4,  \quad k \ge 0, \\
(\Z/2)^{q}, & q = 4k+1,  \quad k \ge 1, \\
(\Z/2)^{q} \oplus (\Z/3)^2, & q = 4k+2,  \quad k \ge 1;
\end{cases}
\vspace{.5em}
\\
&
\Homol_q(\text{PSL}_2(\mathcal{O}_{-6}); \thinspace \Z) &\cong &
\begin{cases}
\Z^2 \oplus \Z/3 \oplus \Z/2, & \thinspace \thinspace \thinspace q=1,\\
\Z \oplus \Z/4 \oplus \Z/3 \oplus (\Z/2)^2, & \thinspace \thinspace \thinspace q=2,\\
\Z/3 \oplus (\Z/2)^{2k+2}, & \thinspace \thinspace \thinspace q = 6k +3,\\
\Z/3 \oplus (\Z/2)^{2k+1}, & \thinspace \thinspace \thinspace  q = 6k+4,\\
\Z/3 \oplus (\Z/2)^{2k+4}, & \thinspace \thinspace \thinspace q = 6k +5,\\
\Z/3 \oplus (\Z/2)^{2k+3}, &  \thinspace \thinspace \thinspace q = 6k+6,\\
\Z/3 \oplus (\Z/2)^{2k+2}, & \thinspace \thinspace \thinspace q = 6k + 7,\\
\Z/3 \oplus (\Z/2)^{2k+5}, &  \thinspace \thinspace \thinspace q = 6k+8.
\end{cases}
\vspace{.5em}
\end{array}
$$
\end{figure}

The arithmetic groups under study have often been called Bianchi groups, because Luigi Bianchi \cite{Bianchi} computed fundamental domains for them as early as in 1892. 
They act on $\PSL_2(\C)$'s symmetric space, the hyperbolic three-space $\Hy$. 
Interest in this action first arose when Felix Klein and Henri Poincar\'e studied certain groups of M\"obius transformations with complex coefficients \cites{Klein, Poincare}, laying the groundwork for the study of \emph{Kleinian groups}. The latter are nowadays defined as discrete subgroups of $\PSL_2(\C)$.
Each non-cocompact arithmetic Kleinian group is commensurable with some Bianchi group \cite{Macl}. 
Thus, the Bianchi groups play a key role in the study of arithmetic Kleinian groups. 
A wealth of information on the Bianchi groups can be found in the pertinent monographs \cites{Fine, Elstrodt, Macl}.\\
Poincar\'e gave an explicit formula for their action on $\Hy$. However, the virtual cohomological dimension of arithmetic groups which are lattices in $\SL_2(\mathbb{C})$ is two, so it is desirable to restrict this proper action on $\Hy$ to a contractible cellular two-dimensional space.
Moreover, this space should be cofinite. 
In principle, this has been achieved by Mendoza \cite{Mendoza} and also by Fl\"oge \cite{Floege}, using reduction theory of Minkowski, Humbert, Harder and others. Their two approaches have in common that they consider two-dimensional \mbox{$\Gamma$-equivariant} retracts which are cocompact and are endowed with a natural CW-structure such that the action of $\Gamma$ is cellular and the quotient is a finite CW-complex.\\
Using Mendoza's complex, Schwermer and Vogtmann \cite{SchwermerVogtmann} calculated the integral group homology in the cases of trivial class group $m=1,2,3,7,11$, and Vogtmann \cite{Vogtmann} computed the rational homology as the homology of the quotient space in many cases of non-trivial class group. The integral cohomology in the cases $m=2,3,5,6,7,10,11$ has been determined by Berkove \cite{Berkove}, based on Fl\"oge's presentation of the groups with generators and relations. A completely different method to obtain group presentations has been chosen by Yasaki~\cite{Yasaki}, who has implemented an algorithm of Gunnells~\cite{Gunnells}
to compute the perfect forms modulo the action of $\mathrm{GL_2}(\mathcal{O}_{-m})$ 
and obtain the facets of the Vorono\"{\i} \thinspace polyhedron arising from a construction of Ash~\cite{Ash}. \\
It is the purpose of the present paper to show how Fl\"oge's complex can be used to obtain the integral homology of Bianchi groups also when the class group is non-trivial. 
We obtain the results displayed in figure \ref{resultstable}.
Thus for $q \ge 2$, the torsion in $\Homol_*(\text{PSL}_2(\mathcal{O}_{-5}); \thinspace \Z)$ is the same as that in $\Homol_*(\text{PSL}_2(\mathcal{O}_{-10}); \thinspace \Z)$, 
analogous to the cohomology results of Berkove \cite{Berkove}. 
The free part of these homology groups is in accordance with the rational homology results of Vogtmann \cite{Vogtmann}. \\
In the cases of non-trivial ideal class group, there is a difference between the approaches of Mendoza and Fl\"oge. 
We use the upper-half-space model of $\Hy$ and identify its boundary with $\C\cup\infty \cong \C P^1.$ 
The elements of the class group of the number field are in bijection with the $\Gamma$-orbits of the cusps, 
where the cusps are $\infty$ and the elements of the number field $\rationals\left[\sqrt{-d}\;\right]$, thought of as elements of the canonical boundary $\C P^1$. 
The cusps which represent a non-trivial element of the class group are commonly called \textit{singular} points. 
Whilst Mendoza retracts away from all cusps, Fl\"oge retracts away only from the non-singular ones. 
Rather than the space $\Hy$ itself, he considers the space $\widehat{\Hy}$ obtained from $\Hy$ by adjoining the $\Gamma$-orbits of the singular  points. We consider an analogous equivariant retraction of $\hat{\Hy}$ such that its retract $X$ contains the singular points. Now it turns out that the quotient space of $X$ by $\Gamma$ is compact, 
and $X$ is a suitable contractible $2$-dimensional $\Gamma$-complex also in the case of non-trivial class group.\\  
With an implementation in Pari/GP \cite{Pari}, due to the first named author, of Swan's algorithm \cite{Swan} we obtain a fundamental polyhedron for $\Gamma$ in $\Hy$. 
In the cases considered, Bianchi has already computed this polyhedron, so we have a control of the correctness of the implementation. \\
In the cases $m=5,6$ and $10$, Fl\"oge has computed the cell stabilizers and cell identifications; and with our Pari/GP program, we redo Fl\"oge's computations and do the same computation in the cases $m=13$ and $15$. We use the equivariant Euler characteristic to check our computations. 
Then we follow the lines of Schwermer and Vogtmann \cite{SchwermerVogtmann}, encountering a spectral sequence which degenerates on the $E^3$-page, in contrast to the cases of trivial class group where it does so already on the $E^2$-page. This is because of the singular points in our cell complex $X$, which have infinite stabilizers. So we have some additional use of homological algebra to obtain the homology of the Bianchi group. We give the full details for our homology computation in the case $m=13$. We then give slightly fewer details in the cases $m=5,6,10$ and $15$.\\
The authors would like to thank Philippe Elbaz-Vincent and Bill Allombert for many helpful discussions and hints on the techniques and the referee for helpful comments.\\
This article is dedicated to Fritz Grunewald (1949\thinspace-\thinspace2010).
\section{Fl\"oge's complex, contractibility and a spectral sequence}
Denote the hyperbolic three-space by $\Hy \cong \C\times\R^*_+$. 
We will not use its smooth structure, only its structure as a homogeneous $\SL_2(\C)$-space. 
The action is given by the formula
$$\begin{pmatrix}
a&b\\
c&d
\end{pmatrix} \cdot
(z,r):=\left(\dfrac{(\overline{d}-\overline{c}\overline{z})(az-b)-r^2\overline{c}a}{\left|cz-d\right|^2+r^2\left|c\right|^2},\dfrac{r}{\left|cz-d\right|^2+r^2\left|c\right|^2}\right);
$$
where $\left(\begin{smallmatrix}
a&b\\
c&d
\end{smallmatrix}\right)\in\SL_2(\C)$. 
As usual, we extend the action of $\SL_2(\C)$ to the boundary $\C P^1$ which we identify with $\{r=0\}\cup\infty \cong \C\cup\infty$. 
The action passes continuously to the boundary, where it reduces to the usual action by M\"obius transformations
$
\left(
\begin{smallmatrix}
a&b\\
c&d
\end{smallmatrix}
\right) \cdot
z=\frac{az-b}{-cz+d}.
$
As $-1\in\SL_2(\C)$ acts trivially, the action passes to $\PSL_2(\C)$. Now, fix a square-free $m\in\N$, let $\mathcal{O}_{-m}$ be the ring of integers in $\rationals[\sqrt{-m}\thinspace]$, and define $\Gamma=\PSL_2(\mathcal{O}_{-m})$. When the class number of $\rationals[\sqrt{-m}\thinspace]$ is one, 
then classical reduction theory provides a natural equivariant deformation retract of $\Hy$ which is a CW-complex. This complex is defined as follows. 
One first considers the union of all hemispheres 
$$S_{\mu,\lambda}:=\left\{(z,r): \quad \left|z-\dfrac{\lambda}{\mu}\right|^2+r^2=\dfrac{1}{\left|\mu\right|^2}\right\} \subset \Hy,$$
 for any two $\mu,\lambda$ with $ \mu \mathcal{O}_{-m}+\lambda \mathcal{O}_{-m}=\mathcal{O}_{-m}$. Then one considers the ``space above the hemispheres''
$$
B:=\bigl\{(z,r): \quad \left|cz-d\right|^2+r^2\left|c\right|^2\ge 1\\
 \text{ for all }c,d\in \mathcal{O}_{-m}, c \ne 0\text{ such that }c \thinspace \mathcal{O}_{-m} +d \mathcal{O}_{-m}=\mathcal{O}_{-m}\bigr\}$$
and its boundary $\partial B$ inside $\Hy$. 
For nontrivial class group, the following definition comes to work.
\begin{df}
A point $s\in \C P^1 -\{\infty\}$ is called a \emph{singular point} if for all \mbox{$c,d\in \mathcal{O}_{-m}$,} 
$c\ne 0$, $c \thinspace \mathcal{O}_{-m} +d\mathcal{O}_{-m}=\mathcal{O}_{-m}$ we have $\left|cs-d\right|\ge 1$.
\end{df}
The singular points modulo the action of $\Gamma$ on  $\C P^1$  are 
in bijection with the nontrivial elements of the class group \cite{SerreA}.
In \cite{Floege}, Fl\"oge extends the hyperbolic space $\Hy$ 
to a larger space $\widehat{\Hy}$ as follows.
\begin{df}
  As a set, $\widehat{\Hy}\subset \C\times \R^{\ge 0}$ is the closure under the $\Gamma$-action of the union \\
$\widehat{B} := B\cup\{\text{singular points}\}$. 
The topology is generated by the topology of $\Hy$ together with the following neighborhoods of the translates $s$ of singular points:
$$
\widehat{U}_\epsilon(s):=\{s\}\cup \begin{pmatrix}
s&0\\-1&s^{-1}
\end{pmatrix} \cdot
\left\{(z,r)\in\Hy :r>\epsilon^{-1}\right\}.
$$
\end{df}
\begin{rem} The matrix $\left(\begin{smallmatrix}s&0\\-1&s^{-1}\end{smallmatrix}\right)$ maps the point at infinity into $s$, 
thus giving the point $s$ the topology of $\infty$. 
The neighborhood $\widehat{U}_\epsilon(s)$ is sometimes called a ``horoball'' because in the upper-half space model it is a Euclidean ball, 
but with the hyperbolic metric it has ``infinite radius''. 
\end{rem}
The space $\widehat{\Hy}$ is endowed with the natural $\Gamma$-action.
Now the essential aspect of Fl\"oge's construction is the following consequence of 
Fl\"oge's theorem \cite{FloegePhD}*{6.6}, which we append as theorem \ref{equivariant_retraction}.
\begin{cor}
There is a retraction $\rho$ from $\widehat{\Hy}$ onto the set $X \subset \widehat{\Hy}$ of all $\Gamma$-translates of $\partial \widehat{B}$, 
i.~e. there is a continuous map $\rho:\widehat{\Hy}\to X$ such that $\rho(p)=p$ for all $p\in X$.
The set $X$ admits a natural structure as a cellular complex $X^\bullet$ on which $\Gamma$ acts cellularly.
\end{cor}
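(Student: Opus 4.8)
The plan is to obtain both assertions as consequences of Fl\"oge's theorem \ref{equivariant_retraction}, which already supplies the analytically delicate retraction on the fundamental region. First I would recall the geometric picture: over a fundamental domain for the stabiliser $\Gamma_\infty$ of the cusp $\infty$, the region $\widehat{B}$ is bounded from below by the hemispheres $S_{\mu,\lambda}$ and contains the adjoined singular points as its points at ``$r=0$''. Fl\"oge's theorem provides the projection that pushes each point of $\widehat{B}$ lying strictly above the union of hemispheres onto the floor $\partial B$, while fixing the points already on $\partial\widehat{B}$ and the singular points. Away from the singular points this projection is the evident radial map onto $\partial B$; near a singular point $s$ it is transported from the corresponding projection near $\infty$ by conjugation with the matrix $\left(\begin{smallmatrix}s&0\\-1&s^{-1}\end{smallmatrix}\right)$ appearing in the definition of $\widehat{U}_\epsilon(s)$. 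I would take care to choose this local retraction $\Gamma_\infty$-equivariantly, which is possible because the arrangement of hemispheres is invariant under $\Gamma_\infty$.

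Second, I would globalise this to a $\Gamma$-equivariant retraction. Since $\widehat{\Hy}$ is by definition the closure of $\widehat{B}$ under the $\Gamma$-action, every point lies in some translate $g\widehat{B}$, and I would set $\rho(g\,p):=g\,\rho(p)$ for $p\in\widehat{B}$. The content to verify is that this is well defined on the overlaps $g\widehat{B}\cap g'\widehat{B}$, which meet only along common faces of $\partial\widehat{B}$; here one uses that the local retraction fixes $\partial\widehat{B}$ pointwise and respects the face identifications, exactly the equivariance built into theorem \ref{equivariant_retraction}. With this, the defining properties $\rho(p)=p$ for $p\in X$ and the $\Gamma$-equivariance of $\rho$ are immediate, and continuity of $\rho$ on the interior $\Hy$ is clear.

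The main obstacle is the behaviour at the singular points, both for continuity and for the cellular structure. For continuity at a singular point $s$ I would use the horoball neighbourhoods $\widehat{U}_\epsilon(s)$: by the remark following Fl\"oge's definition, conjugation by $\left(\begin{smallmatrix}s&0\\-1&s^{-1}\end{smallmatrix}\right)$ carries a neighbourhood of $s$ to a standard neighbourhood of $\infty$, so continuity of $\rho$ at $s$ reduces to continuity of the transported map at $\infty$, which is the classical statement. For the cellular structure, the intersection pattern of the hemispheres $S_{\mu,\lambda}$ decomposes $\partial B$ into two-cells, edges and vertices; the adjoined singular points enter as additional $0$-cells closing up the complex at the cusps, and $\Gamma$ permutes the hemispheres, hence this decomposition, so after passing to a $\Gamma$-invariant subdivision one obtains the cellular complex $X^\bullet$ with cellular $\Gamma$-action. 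The delicate point, which is precisely what later forces the spectral sequence to degenerate only on the $E^3$-page rather than the $E^2$-page, is that these singular $0$-cells carry \emph{infinite} stabilisers; one must check that they are nonetheless genuine cells closing the complex at the cusps, with the $\Gamma$-action permuting cells and inducing a finite quotient $X^\bullet/\Gamma$.
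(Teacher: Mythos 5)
Your overall skeleton agrees with the paper's: the corollary is meant to follow by citing Fl\"oge's theorem \ref{equivariant_retraction} for the retraction, with the cell structure coming from the intersection pattern of the hemispheres $S_{\mu,\lambda}$ on $\partial \widehat{B}$, the singular points as additional $0$-cells, and a $\Gamma$-invariant subdivision (the paper only asserts this part, adding that the subdivision is chosen fine enough that stabilizers fix cells pointwise); your closing remark about the infinite stabilizers $\Z^2$ is likewise consistent. But you misread the scope of theorem \ref{equivariant_retraction}: it asserts the retraction on \emph{all} of $\widehat{\Hy}$, not merely ``on the fundamental region'', so your globalization and continuity arguments are re-proving parts of the theorem, and your substitutes deviate from the actual proof. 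There, $\rho$ is the \emph{vertical} projection $\pi(z,r)=(z,r_z)$ on the fundamental column $\widehat{D}$, extended by $\rho(p)=\gamma^{-1}\circ\pi\circ\gamma(p)$; well-definedness on overlaps is not a matter of agreement along common boundary faces (note $\widehat{B}$ is invariant under the stabilizer of $\infty$, so translates can overlap in interiors) but uses Fl\"oge's lemma 3.4: a transition matrix between overlapping copies has vanishing lower-left entry, hence fixes $\infty$, carries vertical half-lines to vertical half-lines, and so commutes with $\pi$.

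The genuine gap is your treatment of continuity at a singular point $s$. You propose to transport the problem to $\infty$ by $\left(\begin{smallmatrix}s&0\\-1&s^{-1}\end{smallmatrix}\right)$ and invoke ``the classical statement'' of continuity there. No such statement exists: $\infty$ represents the trivial ideal class, so it is \emph{not} adjoined to $\widehat{\Hy}$, and the naive analogue at a cusp fails in the horoball topology --- points of $\widehat{D}$ at height $r\to\infty$ project down to $\partial\widehat{B}$ at bounded height and so leave every neighborhood $\{r>\epsilon^{-1}\}$. What saves the day at a \emph{singular} point is that the hemispheres with $\left|\mu s-\lambda\right|=1$ pass through $s$, so the floor $\partial\widehat{B}$ itself pinches down to $s$; the quantitative content is exactly lemmas \ref{6.3} and \ref{6.4}: small horoballs at $\Gamma$-inequivalent singular points are disjoint, and the low part $\{(z,r)\in\widehat{D} : r<\varepsilon\}$ is trapped inside horoballs $\widehat{U}_{2\varepsilon}(s')$, proved via the truncated-cone estimate on the tangent hemispheres. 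Without these estimates the continuity of $\rho$ at the singular points --- the entire analytic content of the theorem --- is unproved. Your local model is also incorrect as a description: the fibers of $\rho$ near $s$ inside a translate $g\widehat{B}$ are vertical geodesics into the cusp $g\cdot\infty$, not geodesics into $s$, so no conjugated copy of the projection at $\infty$ appears. Had you cited theorem \ref{equivariant_retraction} with its correct scope, the retraction part would be immediate and your cell-structure paragraph would then complete the corollary along the paper's lines.
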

\begin{rem}
\begin{enumerate}
\item
We show with the lemma below that $\rho$ is a homotopy equivalence, 
without giving a continuous path of maps $\widehat{\Hy}\to\widehat{\Hy}$ connecting $\rho$ to the identity on $\widehat{\Hy}$.
\item
The map $\rho$ is $\Gamma$-equivariant because its fibers are geodesics. But we do not make use of this fact, 
as we do not need to show that the homotopy type of $\Gamma\backslash \widehat{\Hy}$ is the same as that of $\Gamma\backslash X$. 
This would be useful in the case of trivial class group, i.\ e. the case of a proper action, 
to compute the rational homology $\Homol_*(\Gamma; \thinspace \rationals)\cong\Homol_*(\Gamma\backslash \Hy; \thinspace \rationals)$.
\item We will provide $X^\bullet$ with a cellular structure which is fine enough to make the cell stabilizers fix the cells pointwise. 
\end{enumerate}
\end{rem}
\begin{Lem}
Let $Y$ be a CW-complex which admits an inclusion $i$ into a contractible topological space $A$, 
such that $i$ is a homeomorphism between $Y$ with its cellular topology and the image $i(Y)$ with the subset topology of $A$. 
Let $p:A\to Y$ be a continuous map with $p \circ i = \id_Y$. Then $p$ is a homotopy equivalence.
\end{Lem}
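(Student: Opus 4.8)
The plan is to prove that the section $i$ is a two-sided homotopy inverse of $p$, which is exactly the assertion that $p$ is a homotopy equivalence. One of the two required homotopies is handed to us outright, since $p \circ i = \id_Y$ holds strictly by hypothesis. All the work therefore goes into producing a homotopy $i \circ p \simeq \id_A$, and for this I would lean entirely on the contractibility of $A$.

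First I would fix a contraction $H \colon A \times [0,1] \to A$ with $H(\cdot, 0) = \id_A$ and $H(\cdot, 1) \equiv a_0$ for some point $a_0 \in A$, which exists precisely because $A$ is contractible. The elementary principle I would extract is that any continuous map $f \colon Z \to A$ into a contractible space is null-homotopic: the assignment $(z,t) \mapsto H(f(z), t)$ is a continuous homotopy from $f$ to the constant map at $a_0$.

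Next I would apply this principle twice, both times inside $A$. Taking $f = \id_A$ shows $\id_A \simeq c_{a_0}$, which is just $H$ itself; taking $f = i \circ p$ shows $i \circ p \simeq c_{a_0}$ via the continuous map $(a,t) \mapsto H(i(p(a)), t)$. Concatenating the latter homotopy with the reverse of the former yields $i \circ p \simeq \id_A$. Combined with $p \circ i = \id_Y$, this exhibits $p$ and $i$ as mutually inverse homotopy equivalences, completing the argument.

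I do not expect a serious obstacle here: the conclusion rests only on contractibility of $A$ together with the retraction identity $p \circ i = \id_Y$. In particular, the hypotheses that $Y$ carries a CW-structure and that $i$ embeds $Y$ homeomorphically onto its image are not consumed by this homotopy-theoretic argument; they belong to the geometric setup that guarantees $X$ is a genuine subcomplex of $\widehat{\Hy}$. One could alternatively argue that the splitting $\pi_n(Y) \to \pi_n(A) \to \pi_n(Y)$ forces $Y$ to be weakly contractible and then invoke Whitehead's theorem using the CW-structure, but this is heavier than necessary; the only point deserving care in the direct route is the continuity of $(a,t) \mapsto H(i(p(a)),t)$, which is immediate as a composite of continuous maps.
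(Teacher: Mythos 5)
Your proposal is correct, and it takes a genuinely different route from the paper. The paper never constructs a homotopy inverse: it first shows $\pi_n(Y)=0$ for all $n$ by factoring $(\id_Y)_* = (p\circ i)_*$ through $\pi_n(A)=0$, then applies Whitehead's theorem to the map $c\circ i$ from $Y$ to a point (this is where the CW hypothesis on $Y$ is genuinely used), and finally deduces that $p$ is a homotopy equivalence from the two-out-of-three property applied to $(c\circ i)\circ p = c$. You instead exhibit $i$ directly as a two-sided homotopy inverse: $p\circ i = \id_Y$ is given, and $i\circ p \simeq \id_A$ follows by running a fixed contraction $H$ of $A$ on both $\id_A$ and $i\circ p$ and concatenating. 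Your argument is more elementary and more general: it needs neither the CW structure on $Y$ nor Whitehead's theorem, and the embedding hypothesis is in fact automatic from $p\circ i = \id_Y$ (a continuous section is always a homeomorphism onto its image, since $p$ restricts to its inverse). The trade-off is that your route consumes a genuine contraction of $A$, whereas the paper's factoring argument would survive knowing only that $A$ is \emph{weakly} contractible; in the application this costs nothing, since lemma \ref{Extended space contractible} produces an explicit contraction of $\widehat{\Hy}$. Your proof is also arguably better aligned with the paper's stated caveat in the remark following the corollary: where the authors emphasize they prove homotopy equivalence without connecting $\rho$ to the identity, your construction actually delivers the explicit homotopy $i\circ p\simeq \id_A$ for free.
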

\begin{proof}
For all $n \in \N$, the induced maps on the homotopy groups 
$(\id_Y)_* = (p \circ i)_* : \pi_n(Y) \to \pi_n(Y)$  factor through $\pi_n(A) = 0$,
hence are the zero map; and $\pi_n(Y) = 0$.
Denote by $c$ the constant map from $A$ to the one-point space. 
Then $c \circ i$ is a morphism of CW-complexes, and the zero maps it induces on the homotopy groups are isomorphisms.  
Thus by Whitehead's Theorem, $c \circ i$ is a homotopy equivalence. 
As $A$ is contractible, the composition $(c \circ i) \circ p = c$ is a homotopy equivalence,
so the same holds already for $p$.
\end{proof}
Taking $Y=X$, $A = \widehat{\Hy}$, $p = \rho$, and using lemma \ref{Extended space contractible}, we obtain a crucial fact for our computations.
\begin{cor}\label{contractibility}
$X^\bullet$ is contractible.
\end{cor}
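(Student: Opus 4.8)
The plan is to invoke the preceding lemma directly under the substitutions $Y = X$, $A = \widehat{\Hy}$, $p = \rho$, and $i$ the inclusion $X \hookrightarrow \widehat{\Hy}$. To do so I must verify its hypotheses. By the retraction corollary above, $X$ carries the structure of a cellular complex $X^\bullet$, so the required $Y = X$ is indeed a CW-complex; by lemma \ref{Extended space contractible} the ambient space $A = \widehat{\Hy}$ is contractible; and since $\rho$ is a retraction onto $X$ it fixes every point of $X$, which is precisely the relation $p \circ i = \id_Y$ demanded by the lemma.

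The one hypothesis that requires genuine attention is that $i$ be a homeomorphism between $X$ equipped with its cellular (weak) topology and the image $i(X)$ equipped with the subspace topology inherited from $\widehat{\Hy}$. This is the main obstacle, since a priori the weak topology of a CW-complex can be strictly finer than a subspace topology, so the two structures on $X$ need not coincide. I would verify the agreement cell by cell: the cells of $X^\bullet$ arise from the compact faces of $\partial\widehat{B}$ together with their $\Gamma$-translates, and each closed cell is a compact subset of $\widehat{\Hy}$ on which the cellular and subspace topologies visibly agree. Local finiteness of the cell structure, which follows from the cocompactness of the $\Gamma$-action on $X$, then allows one to glue these local identifications into a global homeomorphism $X^\bullet \cong i(X)$.

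With all hypotheses in place, the lemma yields that $\rho$ is a homotopy equivalence. As $\widehat{\Hy}$ is contractible and contractibility is a homotopy invariant, it follows that $X^\bullet$ is contractible. Equivalently, one may read the conclusion straight off the proof of the lemma, which establishes $\pi_n(X) = 0$ for all $n$; since $X^\bullet$ is a CW-complex, Whitehead's theorem then gives contractibility directly.
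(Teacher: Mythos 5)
Your proof is essentially the paper's own: the paper disposes of this corollary in one line by applying the preceding lemma with exactly your substitutions $Y=X$, $A=\widehat{\Hy}$, $p=\rho$, together with Lemma \ref{Extended space contractible}, and your closing alternative (reading $\pi_n(X)=0$ off the lemma's proof and invoking Whitehead) is likewise implicit in that argument. One caveat on your extra verification of the topology hypothesis, which the paper itself leaves unchecked: the appeal to local finiteness is shaky, since at a singular point $s$ infinitely many cells of $X$ are incident (the stabilizer $\Gamma_s\cong\Z^2$ translates the finitely many orbit representatives of cells touching $s$), so the weak-versus-subspace comparison there needs the horoball neighborhoods of $\widehat{\Hy}$ rather than local finiteness --- but this is an attempt to fill a gap the paper glosses over, not a deviation from its proof.
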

The following is an observation on Fl\"oge's construction.
\begin{Lem} \label{Extended space contractible}
The space $\widehat{\Hy}$ is contractible.
\end{Lem}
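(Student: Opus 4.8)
The plan is to contract $\widehat\Hy$ directly by the straight-line homotopy towards a fixed interior basepoint $p_0 = (z_0, r_0) \in \Hy$, exploiting that $\widehat\Hy$ sits inside the convex set $\C \times \R^{\ge 0}$. Regarding each singular point $s$ as the boundary point $(s,0)$, I would set
$$H \colon [0,1] \times \widehat\Hy \to \C\times\R^{\ge 0}, \qquad H\bigl(t, (z,r)\bigr) = \bigl((1-t) z + t z_0,\ (1-t) r + t r_0\bigr).$$
For $t = 1$ this is the constant map at $p_0$ and for $t = 0$ it is the identity, so once $H$ is shown to be continuous and to take values in $\widehat\Hy$, the contractibility follows at once.

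First I would check that $H$ never leaves $\widehat\Hy$. For $t \in (0,1]$ the last coordinate $(1-t) r + t r_0$ is strictly positive, being a convex combination of the nonnegative $r$ and the positive $r_0$, so $H(t,\cdot)$ lands in $\Hy \subset \widehat\Hy$; and $H(0,\cdot)$ is the identity. Thus the homotopy instantly pushes each adjoined singular point into the interior $\Hy$ and keeps interior points interior, which is exactly what lets it avoid the rest of the boundary $\{r=0\}$ that does not belong to $\widehat\Hy$. Continuity on $[0,1]\times\Hy$ is immediate from the affine formula, since $\Hy$ is open in $\widehat\Hy$ and carries its usual topology there.

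The essential point, and the main obstacle, is continuity at the pairs $(t,s)$ with $s$ singular, where the topology of $\widehat\Hy$ is the horoball topology rather than the subspace topology of $\C\times\R^{\ge 0}$. Using the cusp-fixing matrix $\left(\begin{smallmatrix} s&0\\-1&s^{-1}\end{smallmatrix}\right)$ and the action formula, one computes that the horoball neighborhood restricts to the Euclidean ball
$$\widehat U_\epsilon(s) \cap \Hy = \bigl\{(z,r) : |z - s|^2 + r^2 < \epsilon\, r\bigr\},$$
tangent to $\{r=0\}$ at $s$; in particular it forces $r < \epsilon$ and $|z-s| < \epsilon$, so the horoball neighborhoods shrink to $(s,0)$ in $\C\times\R^{\ge 0}$. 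This settles continuity at every $(t,s)$ with $t>0$, where $H(t,s)$ is an interior point reached by a Euclidean-continuous affine formula. The delicate case is $(0,s)$: because the horoball topology is strictly finer than the subspace topology, I cannot merely invoke continuity of the formula. Instead, given a target $\widehat U_\epsilon(s)$, I would exhibit $\delta > 0$ and $\epsilon' > 0$ with $H\bigl([0,\delta)\times \widehat U_{\epsilon'}(s)\bigr)\subseteq \widehat U_\epsilon(s)$ by an explicit estimate: writing $a = |z-s|$ and $b = |z_0 - s|$, and using $a^2 + r^2 < \epsilon' r$ together with $(x+y)^2 \le 2x^2 + 2y^2$, one bounds $|z'-s|^2 + r'^2$ against $\epsilon\, r'$ and finds that $\epsilon' < \epsilon/2$ and $\delta \le \tfrac{\epsilon r_0}{2(b^2 + r_0^2)}$ suffice, the two summands being controlled separately by the two parameters. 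This is where the geometry of the tangent horoballs does the work, and I expect it to be the only step requiring genuine care; everything else is formal.
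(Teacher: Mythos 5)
Your proof is correct, but it takes a genuinely different route from the paper's. The paper (following Fl\"oge's Korollar 5.8) argues in two steps: it first retracts $\widehat{\Hy}$ vertically onto $\Hy_1=\{(z,r)\in\widehat{\Hy}: r\ge 1\}$ via the map $\phi((z,r),t)=(z,r+t(1-r))$ for $r<1$, verifying continuity qualitatively by showing that the preimage of each basic open ball is open --- the key step being that if the preimage contains a cusp $s$, it contains a whole horoball at $s$ of Euclidean diameter $\min\{\beta,\delta\}$ --- and then concludes because $\Hy_1\cong\C\times\R_+$ is contractible. You instead contract in a single step by the straight-line homotopy to an interior basepoint $p_0$, and you replace the paper's open-preimage argument by an explicit $\epsilon$--$\delta$ estimate at the pairs $(0,s)$. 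Your supporting computations check out: the horoball is indeed $\widehat{U}_\epsilon(s)\cap\Hy=\{(z,r): |z-s|^2+r^2<\epsilon r\}$ (the matrix $\left(\begin{smallmatrix}s&0\\-1&s^{-1}\end{smallmatrix}\right)$ has lower-left entry of modulus $1$, so $\{r>\epsilon^{-1}\}$ maps to the tangent ball of Euclidean diameter exactly $\epsilon$), and with $(x+y)^2\le 2x^2+2y^2$ one gets $|z'-s|^2+r'^2\le 2(1-t)^2(a^2+r^2)+2t^2(b^2+r_0^2)<2(1-t)\epsilon' \cdot (1-t)r+2t^2(b^2+r_0^2)$, which is bounded by $\epsilon(1-t)r+\epsilon t r_0=\epsilon r'$ under your stated conditions $\epsilon'\le\epsilon/2$ and $t<\epsilon r_0/(2(b^2+r_0^2))$; the cusp $s$ itself and the cases $t>0$ (where the output lies in $\Hy$ and horoballs shrink Euclidean-wise to $(s,0)$) are handled as you say. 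You also correctly identify the one genuine trap: since the horoball topology is strictly finer than the subspace topology of $\C\times\R^{\ge 0}$ at $s$, Euclidean continuity of the affine formula does not suffice when the \emph{output} is a singular point, which happens exactly at $(0,s)$ and is exactly where your estimate is needed. What each approach buys: yours is shorter, self-contained and quantitative, avoiding the intermediate space $\Hy_1$; the paper's vertical retraction has geodesic vertical fibers and matches the form of Fl\"oge's equivariant retraction used elsewhere in the paper, though equivariance plays no role in this particular lemma, and the paper's own continuity discussion (``very similar arguments'' for continuity at $t=1$ and in $t$) is if anything sketchier than yours.
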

\begin{proof}
One can identify the boundary of $\Hy \cong \{ (z,r) \in  \C \times \R \thinspace | \thinspace r>0 \}$ 
with $\C P^1 \cong \C\cup\infty \cong \{r=0\}\cup\infty$. 
By viewing the singular points as part of the boundary, we arrive at an upper half-space model of $\widehat{\Hy}$.\\
Now consider $\Hy_1:=\{(z,r)\in\widehat{\Hy}:r\ge 1\}$ with the subspace topology of $\widehat{\Hy}$. 
The idea of the proof is to consider a vertical retraction onto $\Hy_1$, and to show by an explicit argument that preimages of open sets are open. Fl\"oge \cite{FloegePhD}*{Korollar 5.8} suggests using the map
$$
\phi:\widehat{\Hy}\times[0,1]\to\widehat{\Hy},\ ((z,r),t)\mapsto \begin{cases}
(z,r)\text{ for all }t\in[0,1],&\text{if }r\ge 1\\
(z,r+t(1-r)),&\text{if }r<1.
\end{cases}
$$
Let us now check that this is a continuous family of continuous maps. 
Consider the collection of open balls with respect to the Euclidean metric on $\C\times\R_+$ as soon as they are either contained in $\C\times\R^*_+$, 
or touch the boundary $\C\times \{0\}$ in a cusp in $\widehat{\Hy}-\Hy$. 
This is a basis for the topology of $\widehat{\Hy}$. Consider one such open ball $\Ball$, and its preimage under some $\phi_t$, $t\in[0,1)$. 
This either lies entirely in $\Hy$, and is open, or it has boundary points. 
In the latter case, consider the inverse of $\phi_t$ on $\widehat{\Hy}-\Hy_1$, given by
$$
\phi_t^{-1}=\bigl(z,\tfrac{r-t}{1-t}\bigr)
$$ 
if this is in $\widehat{\Hy}$. Suppose there is a cusp $s$ with $s\in\widehat{\Hy}-\Hy$ and $\phi_t(s,0)\in \Ball$. As $\Ball$ is open, we find $\beta>0$ and $\delta>0$ such that $(s,t+\beta)$ and $(s+\delta,t)$ are in $\Ball$.
Since
$$
\begin{cases}
\phi_t\left(s,\frac{\beta}{1-t}\right)=(s,t+\beta)\in \Ball\\
\phi_t(s+\delta,0)=(s+\delta,t)\in \Ball,\\
\end{cases}
$$
we know that $(s,\frac{\beta}{1-t})$ and $(s+\delta,0)$ are in the preimage of $\Ball$ under $\phi_t$. We deduce that the whole horosphere of Euclidean diameter $\text{min }\{\beta,\delta\}$ touching at the cusp $s$ is included in the preimage of $\Ball$. Thus each point of the preimage has a neighborhood entirely contained in the preimage, which therefore also is open. The continuity at $t=1$ as well as the continuity in the variable $t$ follow from very similar arguments.
The space $\Hy_1$ is homeomorphic to $\C\times \R_+$, thus contractible.
\end{proof}

\subsection{The equivariant spectral sequence in group homology}$\mbox{ }$\label{SpecSeq}\\
Corollary \ref{contractibility} gives us a contractible complex $X^\bullet$ on which $\Gamma$ acts cellularly.
As a consequence, the integral homology $\Homol_*(\Gamma; \thinspace \Z)$ 
can be computed as the hyperhomology $\mathbb{H}_*(\Gamma; \thinspace  C_\bullet(X))$ 
of $\Gamma$ with coefficients in the cellular chain complex $C_\bullet(X)$ associated to $X$. 
This hyperhomology is computable because there is a spectral sequence as 
in \cite{Brown}*{VII} which is also the one used in \cite{SchwermerVogtmann}. 
It is the spectral sequence associated to the double complex $\Theta^\Gamma_\bullet\otimes_{\Z\Gamma} C_\bullet(X)$ 
computing the hyperhomology, 
where we denote by $\Theta^\Gamma_\bullet$ the bar resolution of the group 
$\Gamma$. This spectral sequence can be rewritten 
(see \cite{SchwermerVogtmann}*{1.1}) to yield
$$
E^1_{p,q}=\bigoplus_{\sigma\thinspace\in\thinspace\Gamma\backslash X^p}\Homol_q(\Gamma_\sigma; \thinspace \Z)\implies \Homol_{p+q}(\Gamma; \thinspace \Z),
$$
where $\Gamma_\sigma$ denotes the stabilizer of (the chosen representative for) the $p$-cell $\sigma$. 
We have stated the above $E^1$-term with trivial $\Z$-coefficients in $\Homol_q(\Gamma_\sigma; \thinspace \Z)$,
 because we use a fundamental domain which is strict enough to give $X$ a cell structure on which $\Gamma$ acts without inversion of cells. 
We shall also make extensive use of the description of the $d^1$-differential given in \cite{SchwermerVogtmann}. 
\\
The technical difference to the cases of trivial class group, treated by \cite{SchwermerVogtmann}, 
is that the stabilizers of the singular points are free abelian groups of rank two. 
In particular, the $\Gamma$-action on our complex $X^\bullet$ is not a \textit{proper action} in the sense that all stabilizers are finite. As a consequence, the resulting spectral sequence does not degenerate on the $E^2$-level as it does in Schwermer and Vogtmann's cases.\\
So we compute a nontrivial differential $d^2$, making some additional use of homological algebra, in particular the below lemma and its corollary.
\begin{rem}\label{remark}
It would be possible to shift the technical difficulty away from homological algebra, using a topological modification of our complex.
In our cases of class number two, there is one singular point in the fundamental domain, representing the nontrivial element of the class group.
Its stabilizer is free abelian of rank two, and contributes the homology of a torus to the zeroth column of the $E^2$-term of our spectral sequence:
$\Homol_1(\Z^2; \thinspace \Z) \cong \Z^2$, $\Homol_2(\Z^2; \thinspace \Z) \cong \Z$ and $\Homol_q(\Z^2; \thinspace \Z) = 0$ for $q>2$.  
One could modify our complex in order to make the $\Gamma$-action on it proper, by replacing each singular point by an $\R^2$ with the former stabilizer $\Z^2$ now acting properly. Then the nontriviality of our differential is equivalent to the existence of a nontrivial homology relation induced by adjoining the torus $\R^2/\Z^2$ to the fundamental domain.
\end{rem}
The following lemma will be useful for computing our $d^2$-differential in the situations where cycles for $\Gamma_\sigma$ are given in terms of the standard resolution of $\Gamma$ instead of $\Gamma_\sigma$. In order to state it, let $\Gamma_\sigma$ be a finite subgroup of $\Gamma$, let $M$ be a $\Z\Gamma_\sigma$-module, and $\ell:\Gamma/\Gamma_\sigma\to\Gamma$ a set-theoretical section of the quotient map $\pi:\Gamma\to\Gamma/\Gamma_\sigma$. Further, denote the standard bar resolution of a discrete group $\Gamma$ by $\Theta_\bullet^\Gamma$. It will be convenient to view $\Theta_\bullet^\Gamma$ as a complex of $\Z\Gamma$-right modules resp. $\Z\Gamma_\sigma$-right modules. Thus, $\Theta_q^\Gamma$ is defined as the free $\Z$-module generated by the $(q+1)$-tuples $(\gamma_0,\dots,\gamma_q)$ of elements of $\Gamma$ with the action given by $(\gamma_0,\dots,\gamma_q).\gamma = (\gamma_0\gamma,\dots,\gamma_q\gamma)$ and the same boundary operator as in the left module case, namely $\partial =\sum_{i=0}^q (-1)^i d_i$ where $d_i(\gamma_0,\dots,\gamma_q)=(\gamma_0,\dots, \widehat{\gamma_i},\dots,\gamma_q)$.
\begin{Lem}\label{Lemma}
The section $\ell$ defines a map of $\Z\Gamma_\sigma$-complexes
$$
\hat{\varepsilon_\ell}:\Theta_\bullet^\Gamma\longrightarrow\Theta_\bullet^{\Gamma_\sigma}
$$
of degree zero which is a retraction of the resolution $\Theta_\bullet^\Gamma$ of the group $\Gamma$ to the resolution $\Theta_\bullet^{\Gamma_\sigma}$ of $\Gamma_\sigma$. For each $\gamma\in\Gamma$, $\ell(\pi(\gamma))$ is in the same orbit of the $\Gamma_\sigma$-right-action on $\Gamma$ as $\gamma$, so $(\ell(\pi(\gamma)))^{-1} \gamma \in \Gamma_\sigma$. The map $\hat{\varepsilon_\ell}$ is induced on $\Theta_0^ \Gamma=\Z\Gamma$ by
\begin{align*}
 \Gamma  \xrightarrow{\ \varepsilon_\ell\ } \Gamma_\sigma\to\Z\Gamma_\sigma,\\
\quad \quad \gamma \mapsto  (\ell(\pi(\gamma)))^{-1} \gamma
\end{align*}
and is continued as a tensor product 
$\hat{\varepsilon_\ell} = \varepsilon_\ell \otimes ... \otimes \varepsilon_\ell 
= \varepsilon_\ell^{\otimes (n+1)}$ on $\Theta_n^ \Gamma$.
\end{Lem}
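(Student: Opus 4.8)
The plan is to reduce everything to a single observation about the set-map $\varepsilon_\ell\colon\Gamma\to\Gamma_\sigma$, $\gamma\mapsto(\ell(\pi(\gamma)))^{-1}\gamma$, namely that it is equivariant for the right $\Gamma_\sigma$-action, and then to propagate this fact through the tensor structure. First I would record that $\varepsilon_\ell$ is well defined: since $\ell$ is a section of $\pi$, the element $\ell(\pi(\gamma))$ lies in the same right $\Gamma_\sigma$-orbit (the left coset $\gamma\Gamma_\sigma$) as $\gamma$, so $(\ell(\pi(\gamma)))^{-1}\gamma\in\Gamma_\sigma$, exactly as asserted in the statement. Extending $\varepsilon_\ell$ $\Z$-linearly gives the map $\Z\Gamma\to\Z\Gamma_\sigma$ on $\Theta_0$, and $\hat{\varepsilon_\ell}$ on $\Theta_n^\Gamma\cong(\Z\Gamma)^{\otimes(n+1)}$ is by definition the $(n+1)$-fold tensor power, i.e. $\hat{\varepsilon_\ell}(\gamma_0,\dots,\gamma_n)=(\varepsilon_\ell(\gamma_0),\dots,\varepsilon_\ell(\gamma_n))$. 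This is manifestly $\Z$-linear and of degree zero, so it remains only to check three things: right $\Gamma_\sigma$-equivariance, compatibility with the boundary, and the retraction property.

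The key computation is the equivariance of $\varepsilon_\ell$. For $g\in\Gamma_\sigma$ we have $\pi(\gamma g)=\pi(\gamma)$, because $\gamma g$ and $\gamma$ differ by right multiplication by an element of $\Gamma_\sigma$ and hence lie in the same orbit. Therefore $\ell(\pi(\gamma g))=\ell(\pi(\gamma))$ and
$$\varepsilon_\ell(\gamma g)=(\ell(\pi(\gamma g)))^{-1}\gamma g=(\ell(\pi(\gamma)))^{-1}\gamma g=\varepsilon_\ell(\gamma)\,g.$$
Applying this in each tensor factor shows $\hat{\varepsilon_\ell}((\gamma_0,\dots,\gamma_n).g)=\hat{\varepsilon_\ell}(\gamma_0,\dots,\gamma_n).g$ for all $g\in\Gamma_\sigma$, which is the claimed $\Z\Gamma_\sigma$-linearity, the diagonal right action being respected precisely because $\varepsilon_\ell$ respects it coordinatewise.

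For the chain-map property I would note that $\hat{\varepsilon_\ell}$ commutes with each individual face operator $d_i$: since $d_i$ merely deletes the $i$-th coordinate while $\hat{\varepsilon_\ell}$ acts coordinatewise, one has $d_i\circ\hat{\varepsilon_\ell}=\hat{\varepsilon_\ell}\circ d_i$, and summing with the signs $(-1)^i$ gives $\partial\circ\hat{\varepsilon_\ell}=\hat{\varepsilon_\ell}\circ\partial$. Finally, for the retraction statement one composes with the inclusion $\iota\colon\Theta_\bullet^{\Gamma_\sigma}\hookrightarrow\Theta_\bullet^\Gamma$ induced by $\Gamma_\sigma\subset\Gamma$; here one needs $\varepsilon_\ell(g)=g$ for every $g\in\Gamma_\sigma$, and since $\pi(g)=\pi(e)$ this value equals $(\ell(\pi(e)))^{-1}g$, so it suffices to normalise the section so that $\ell(\pi(e))=e$, which we may always arrange, the base coset being $\Gamma_\sigma$ itself. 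With this normalisation $\hat{\varepsilon_\ell}\circ\iota=\id$ coordinatewise, which establishes the retraction.

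The main obstacle, if one can call it that, is purely bookkeeping: keeping the left/right conventions straight, since the bar resolution is here regarded as a complex of right modules and $\Gamma_\sigma$ acts on the right, so that the classes $\pi(\gamma)$ must be read as right $\Gamma_\sigma$-orbits throughout. The single genuine hypothesis concealed in the word ``retraction'' is the normalisation $\ell(\pi(e))=e$; everything else is formal, flowing from the coordinatewise definition of $\hat{\varepsilon_\ell}$ together with the one-line equivariance computation above.
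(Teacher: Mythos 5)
Your proof is correct, but it takes a genuinely different route from the paper's. The paper does not verify the chain-map and retraction properties by direct computation: it isolates two formal conditions --- that $\hat{\varepsilon_\ell}$ is $\Z\Gamma_\sigma$-linear and that the augmentation $\Theta_0^\Gamma\to\Z$ factors as $\hat{\varepsilon_\ell}$ followed by the augmentation $\Theta_0^{\Gamma_\sigma}\to\Z$ --- declares them (together with the chain-map property itself) ``easily checked'', and then invokes the comparison theorem of homological algebra (\cite{Weibel}*{2.2.6}): both complexes are free resolutions of $\Z$ over $\Z\Gamma_\sigma$ (for $\Theta_\bullet^\Gamma$ because $\Z\Gamma$ is free as a $\Z\Gamma_\sigma$-module), so any two $\Z\Gamma_\sigma$-chain maps lifting $\id_\Z$ are chain homotopic; composing $\hat{\varepsilon_\ell}$ with the canonical inclusion $\iota\colon\Theta_\bullet^{\Gamma_\sigma}\to\Theta_\bullet^\Gamma$ in either order is therefore homotopic to the identity, and no normalisation of $\ell$ is needed --- the paper's remark (3) explicitly stresses that $\ell(\pi(1))=1$ is chosen only for simplicity. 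You instead do everything by hand: the one-line equivariance $\varepsilon_\ell(\gamma g)=\varepsilon_\ell(\gamma)g$ propagated coordinatewise, commutation with each face map $d_i$, and the strict identity $\hat{\varepsilon_\ell}\circ\iota=\id$ under the normalisation. Each approach buys something: yours is elementary, yields a retraction on the nose rather than up to homotopy, and actually supplies the chain-map verification the paper leaves implicit; the paper's is normalisation-free and gives more in the homotopical direction, namely that $\iota$ is a two-sided homotopy inverse, so $\hat{\varepsilon_\ell}$ is a homotopy equivalence --- the form of the statement used for the independence-of-$\ell$ assertion in the remark that follows the lemma. One small correction to your closing paragraph: calling $\ell(\pi(1))=1$ ``the single genuine hypothesis concealed in the word retraction'' is accurate only for the strict reading; without it, $\hat{\varepsilon_\ell}\circ\iota$ is left translation by $(\ell(\pi(1)))^{-1}\in\Gamma_\sigma$ in every coordinate, which is a $\Z\Gamma_\sigma$-chain map lifting $\id_\Z$ and hence still homotopic to the identity, so the lemma in the sense the paper uses it holds for an arbitrary section.
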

\begin{rem}
\begin{enumerate}
\item
Since the group $\Gamma_\sigma$ acts from the right, the map $\varepsilon_\ell$ is $\Z\Gamma_\sigma$-linear.
\item
Note that the resulting isomorphism in homology from
$
\Homol_*(\Theta_\bullet^\Gamma\otimes_{\Z\Gamma_\sigma}M)
$ to $
\Homol_*(\Theta_\bullet^{\Gamma_\sigma}\otimes_{\Z\Gamma_\sigma}M)
$
is independent of the choice of $\ell$, and consistent with 
the canonical isomorphisms of both sides with $\Homol_*(\Gamma_\sigma; \thinspace M)$.
\item
Note that in the above lemma,  it is not necessary to require \mbox{$\ell(\pi(1))=1$.} 
This would imply that $\varepsilon_\ell$ is the identity on 
$\Theta_\bullet^{\Gamma_\sigma}$. 
However, we will choose $\ell(\pi(1))=1$ for simplicity.
\item
In explicit terms, the map $\varepsilon_\ell$ is described as follows:
$$ \varepsilon_\ell: {\mathbb{Z}} \Gamma 
\rightarrow {\mathbb{Z}} \Gamma_\sigma,$$
$$ \sum_{\gamma \in \Gamma} a_\gamma \gamma 
= \sum_{\gamma_\sigma \in \Gamma_\sigma} 
\sum_{\rho \in \Gamma / \Gamma_\sigma} 
a_{\gamma_\sigma  \ell(\rho)} \gamma_\sigma  \ell(\rho)
\mapsto \sum_{\gamma_\sigma \in \Gamma_\sigma} 
\biggl(\sum_{\rho \in \Gamma / \Gamma_\sigma} 
a_{\gamma_\sigma  \ell(\rho)}\biggr)\gamma_\sigma,   $$
where the $a_\gamma$ are coefficients from ${\mathbb{Z}}$.
The map $ \varepsilon_\ell$ restricts to the identity on 
${\mathbb{Z}} \Gamma_\sigma$ and gives an isomorphism of 
${\mathbb{Z}}$-modules from
$ {\mathbb{Z}} [\ell(\rho) \Gamma_\sigma] 
$ to $ {\mathbb{Z}} \Gamma_\sigma$
for every $\Gamma_\sigma$-orbit $\ell(\rho) \Gamma_\sigma$.
\end{enumerate}
\end{rem}
\begin{proof}[Proof (of the lemma).]
In fact, the statement holds for any chain map $\hat{\varepsilon}$ 
in the place of $\hat{\varepsilon_\ell}$ that satisfies the following conditions. 
They are easily checked to hold for the maps $\hat{\varepsilon_\ell}$.
\begin{enumerate}
\item
$\hat{\varepsilon}$ is $\Z\Gamma_\sigma$-linear.
\item
The augmentation $\Theta_0^\Gamma\to\Z$ is the composition of 
$\hat{\varepsilon}$ with the augmentation $\Theta_0^{\Gamma_\sigma}\to\Z$.
\end{enumerate}
Then the statement follows from the comparison theorem \cite{Weibel}*{2.2.6} 
of fundamental homological algebra. In fact, the properties imply that 
$\hat{\varepsilon}$ is a chain map of resolutions lifting the identity on $\Z$.
 An inverse is given by the canonical inclusion 
 $\Theta_\bullet^{\Gamma_\sigma}\to\Theta_\bullet^\Gamma$, 
 and since the composition is unique up to chain homotopy equivalence, 
 it must be homotopic to the identity.
\end{proof}
The group $\Gamma_\sigma$ acts diagonally from the right on 
$\Theta_1^ \Gamma \cong \Z\Gamma \otimes_\Z \Z\Gamma$, and trivially on $\Z$,
so we can consider $\Theta_1^ \Gamma \otimes_{\Z\Gamma_\sigma} \Z$.  Denote the commutator quotient map $\Gamma_\sigma \to (\Gamma_\sigma)^\mathrm{ab} \cong \Homol_1(\Gamma_\sigma)$ by $a\mapsto \overline{a}$.
\begin{cor} \label{Corollary}
 Consider a cycle for $\Homol_1(\Gamma_\sigma; \thinspace  \Z )$ of the form $\sum_i (a_i\otimes_\Z b_i) \otimes_{\Z\Gamma_\sigma} 1 \in \Theta_1^\Gamma\otimes_{\Z\Gamma_\sigma}\Z$ where $a_i,b_i\in\Z\Gamma$. Assume that all $a_i$ and $b_i$ are elements of $\Gamma$. The ensuing homology class is then given by 
$$\sum_i \overline{   \varepsilon_\ell(b_{i})  \varepsilon_\ell(a_i)^{-1} }  \in (\Gamma_\sigma)^\mathrm{ab}.$$
\end{cor}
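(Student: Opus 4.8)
The plan is to transport the given class along the retraction $\hat{\varepsilon_\ell}$ of Lemma \ref{Lemma} and then read it off under the standard identification of $\Homol_1$ with the abelianization. First I would record that, since $\Z\Gamma$ is free as a $\Z\Gamma_\sigma$-module, the bar resolution $\Theta_\bullet^\Gamma$ is also a free resolution of $\Z$ over $\Z\Gamma_\sigma$; hence $\Homol_*\bigl(\Theta_\bullet^\Gamma\otimes_{\Z\Gamma_\sigma}\Z\bigr)\cong\Homol_*(\Gamma_\sigma;\thinspace\Z)$, and the element $\sum_i(a_i\otimes_\Z b_i)\otimes_{\Z\Gamma_\sigma}1$ indeed represents a class there.

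Next, because $\hat{\varepsilon_\ell}$ is $\Z\Gamma_\sigma$-linear it descends to a chain map $\hat{\varepsilon_\ell}\otimes\id:\Theta_\bullet^\Gamma\otimes_{\Z\Gamma_\sigma}\Z\to\Theta_\bullet^{\Gamma_\sigma}\otimes_{\Z\Gamma_\sigma}\Z$, and by Lemma \ref{Lemma} this realizes the canonical isomorphism on $\Homol_*(\Gamma_\sigma;\thinspace\Z)$. So the homology class is left unchanged if I replace the cycle by its image. Since $\hat{\varepsilon_\ell}=\varepsilon_\ell^{\otimes 2}$ in degree one and each $a_i,b_i\in\Gamma$, this image is the cycle $\sum_i\bigl(\varepsilon_\ell(a_i)\otimes_\Z\varepsilon_\ell(b_i)\bigr)\otimes_{\Z\Gamma_\sigma}1$, now built from the elements $\varepsilon_\ell(a_i),\varepsilon_\ell(b_i)\in\Gamma_\sigma$.

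Finally I would make explicit the identification $\Homol_1(\Gamma_\sigma;\thinspace\Z)\cong(\Gamma_\sigma)^{\mathrm{ab}}$ in these coordinates. Using the diagonal right $\Gamma_\sigma$-action to normalize the first slot, a generator satisfies $(g_0\otimes_\Z g_1)\otimes_{\Z\Gamma_\sigma}1=(1\otimes_\Z g_0^{-1}g_1)\otimes_{\Z\Gamma_\sigma}1$, and one checks against $\partial_2(g_0,g_1,g_2)=(g_1,g_2)-(g_0,g_2)+(g_0,g_1)$ that the assignment $(g_0\otimes_\Z g_1)\otimes_{\Z\Gamma_\sigma}1\mapsto\overline{g_0^{-1}g_1}$ descends to homology and is exactly the canonical isomorphism. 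Applying it to each summand with $g_0=\varepsilon_\ell(a_i)$ and $g_1=\varepsilon_\ell(b_i)$, and using $\overline{g_0^{-1}g_1}=\overline{g_1g_0^{-1}}$ in the abelian group, yields $\sum_i\overline{\varepsilon_\ell(b_i)\varepsilon_\ell(a_i)^{-1}}$, as claimed.

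The main obstacle is purely one of conventions in this last step: keeping track of how the diagonal right action collapses $\Theta_1^{\Gamma_\sigma}\otimes_{\Z\Gamma_\sigma}\Z$ onto $(\Gamma_\sigma)^{\mathrm{ab}}$, and verifying that the resulting map kills the image of $\partial_2$. The order of the product and the inverse are irrelevant in the end, since $\overline{g_0^{-1}g_1}=\overline{g_1g_0^{-1}}$ in an abelian group; everything else is formal once Lemma \ref{Lemma} and the freeness of $\Z\Gamma$ over $\Z\Gamma_\sigma$ are in hand.
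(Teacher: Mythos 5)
Your proof is correct and takes essentially the same route as the paper's: transport the cycle through $\hat{\varepsilon_\ell}=\varepsilon_\ell\otimes_\Z\varepsilon_\ell$ (which by Lemma \ref{Lemma} induces the canonical isomorphism on $\Homol_*(\Gamma_\sigma;\thinspace\Z)$), normalize the first tensor slot via the diagonal right $\Gamma_\sigma$-action, and read off the class under the standard identification $\Homol_1(\Gamma_\sigma;\thinspace\Z)\cong(\Gamma_\sigma)^{\mathrm{ab}}$, which the paper cites from Brown while you verify it directly against $\partial_2$. The only cosmetic deviation is that the right-action convention actually normalizes $(g_0\otimes_\Z g_1)\otimes_{\Z\Gamma_\sigma}1$ to $(1\otimes_\Z g_1g_0^{-1})\otimes_{\Z\Gamma_\sigma}1$ rather than your $g_0^{-1}g_1$, a discrepancy you yourself correctly dispose of by noting that $\overline{g_0^{-1}g_1}=\overline{g_1g_0^{-1}}$ in the abelianization.
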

By the linearity of the described map, this covers the general case as the cycles of the form appearing in the corollary generate the submodule of all cycles. Note that the cycle condition on $\sum\limits_{i} (a_i\otimes_\Z b_i) \otimes_{\Z\Gamma_\sigma} 1$ says that $\sum\limits_i (b_i-a_i) \otimes_{\Z\Gamma_\sigma} 1=0,$ which means that $\sum_i a_i$ is equivalent to $\sum_i b_i$ modulo $\Z\Gamma_\sigma$.
\begin{proof}[Proof (of the corollary).]
Using lemma \ref{Lemma}, we apply the map
$$ (\varepsilon_\ell \otimes_{\mathbb{Z}}  \varepsilon_\ell) 
	\otimes_{{\mathbb{Z}} \Gamma_\sigma }  1 : 
({\mathbb{Z}} \Gamma \otimes_{\mathbb{Z}} {\mathbb{Z}} \Gamma )
	\otimes_{{\mathbb{Z}} \Gamma_\sigma }  {\mathbb{Z}}
\longrightarrow 
({\mathbb{Z}} \Gamma_\sigma \otimes_{\mathbb{Z}}  {\mathbb{Z}} \Gamma_\sigma )
 	\otimes_{{\mathbb{Z}} \Gamma_\sigma }  {\mathbb{Z}}
$$
to get
\begin{align*}
\sum(\varepsilon_\ell \otimes_{\mathbb{Z}} 
 \varepsilon_\ell \otimes_{\Z\Gamma_\sigma} 1)( a_i\otimes_\Z b_i \otimes_{\Z\Gamma_\sigma} 1)
= \sum
(\varepsilon_\ell( a_i) \otimes_\Z \varepsilon_\ell(b_i)) \otimes_{\Z\Gamma_\sigma} 1.
\end{align*}
Denote the augmentation $\Z\Gamma_\sigma\to\Z$ by $\varepsilon$. As $a_i \in \Gamma$, we have $\varepsilon_\ell( a_i) \in \Gamma_\sigma$ which is invertible in $\Z\Gamma_\sigma$, and $\varepsilon(\varepsilon_\ell( a_i)) = 1$. So the above term equals
$$ \sum
\left( 1 \otimes_\Z \varepsilon_\ell(b_i)(\varepsilon_\ell( a_i))^{-1}\right) \otimes_{\Z\Gamma_\sigma} \varepsilon( \varepsilon_\ell(a_i))\\
= \sum
\left( 1 \otimes_\Z \varepsilon_\ell(b_i)(\varepsilon_\ell( a_i))^{-1}\right) \otimes_{\Z\Gamma_\sigma} 1,$$
where we take into account that the action of $\Z\Gamma_\sigma$ on ${\mathbb{Z}} \Gamma_\sigma \otimes_{\mathbb{Z}}  {\mathbb{Z}} \Gamma_\sigma$ is the diagonal right action, and that of $\Z\Gamma_\sigma$ on $\Z$ is the trivial action $a\cdot 1 = \varepsilon(a)$ for $a\in\Z\Gamma_\sigma$. In bar notation, we thus obtain the cycle $\sum \left[   \varepsilon_\ell(b_i)(\varepsilon_\ell( a_i))^{-1}\right] 
\otimes_{\Z\Gamma_\sigma} 1$,  which is mapped to
$$
\sum_i \overline{   \varepsilon_\ell(b_{i})  \varepsilon_\ell(a_i)^{-1} } 
\in  (\Gamma_\sigma)^\mathrm{ab}
$$
by the map described in \cite{Brown}*{page 36}; it is easy to check that an isomorphism  $\Homol_1(\Theta_\bullet^G\otimes_G \Z)\cong G^\mathrm{ab}$ is described by $(1\otimes g)\otimes_G 1=\left[g\right]\otimes_G 1\mapsto \overline{g}$ also in the case where $\Theta^G_\bullet$ is acted on by $G$ from the right. Moreover, this isomorphism is natural with respect to group inclusions.
\end{proof}

Another property of the spectral sequence is that a part of it can be checked whenever the geometry of the fundamental domain and a presentation of $\Gamma$ are known. As Fl\"oge shows, an inspection of the complex $X$ and the associated stabilizer groups and identifications yields, together with \mbox{\cite{Abels}*{theorem 4.5},} a presentation of $\Gamma$ by means of generators and relations. We will use the presentation computed by Fl\"oge for $m=5,6,10$ and that computed by Swan \cite{Swan} for $m=15$.
\begin{rem}\label{check}
Let us look at the low term short exact sequence
$$ \xymatrix{
0 \ar[r] & E^\infty_{0,1} \ar[r] & \Gamma^{\text{ab}} \ar[r]^\rho & E^\infty_{1,0} \ar[r] & 0
}$$
of the spectral sequence. We have $E^\infty_{1,0}=\Homol_1(\Gamma\backslash X)=(\pi_1(\Gamma\backslash X))^{\text{ab}}$, and the projection $\rho$ is the abelianization of the map $\Gamma\to\pi_1(\Gamma\backslash X)$ given as follows. Choose a fixed base point $x\in X$. For every $\gamma\in\Gamma$, choose a continuous path in $X$ from $x$ to $\gamma x$. This gives a well-defined loop in $\Gamma\backslash X$ since $X$ is contractible.\\
The abelianization of $\Gamma$ can be immediately deduced from its presentation. Thus, we can compute the group $E_{0,1}^\infty=E_{0,1}^3$ as the kernel of the projection $\rho$ and check this with the result obtained from detailed analysis of the $d^2$-differential.
\end{rem}

\subsection{The homology of the finite subgroups in the Bianchi groups}$\mbox{ }$\\
In order to compute the $E^1$-term of the spectral sequence introduced in section \ref{SpecSeq}, we will need the isomorphism classes of the homology groups of the stabilizers.
\begin{Lem}[Schwermer/Vogtmann \cite{SchwermerVogtmann}] \label{finiteSubgroups}
The only isomorphism classes of finite subgroups in $\mathrm{PSL}_2({\mathcal{O}})$ are the cyclic groups of orders two and three, the trivial group, the Klein four-group ${\mathcal{D}}_2 \cong \Z/2 \times \Z/2$, the symmetric group~${\mathcal{S}}_3$ and the alternating group ${\mathcal{A}}_4$. \\
The homology with trivial $\Z$ respectively $\Z/n$-coefficients, $n = 2$ or $3$, of these groups is
\tiny
\begin{alignat*}6
&
\Homol_q(\Z/n; \thinspace  \thinspace \Z)
&
 \cong
 & 
  \begin{cases}
   \Z, & q = 0, \\
   \Z/n, & q \medspace \mathrm{ odd}, \\
   0, & q \medspace \mathrm{ even}, \medspace q > 0;
   \end{cases}
&
\quad
& 
 \Homol_q(\Z/n; \thinspace \Z/n)
&\cong
&\thinspace \Z/n, q \in \N \cup \{0\};
&\quad
  &
&
&
\\
& \Homol_q({\mathcal{D}}_2; \thinspace \Z)
 &
 \cong 
 &
\begin{cases}
 \Z, & q = 0, \\
 (\Z/2)^\frac{q+3}{2}, & q \medspace \mathrm{ odd}, \\
 (\Z/2)^\frac{q}{2}, & q \medspace \mathrm{ even}, \medspace q > 0;
 \end{cases}
&\quad&
\Homol_q({\mathcal{D}}_2; \thinspace \Z/2) &
\cong
& (\Z/2)^{q+1}
&\quad&
   \Homol_q({\mathcal{D}}_2; \thinspace \Z/3) &=& 0, q \geq 1;\\
&  \Homol_q({\mathcal{S}}_3; \thinspace \Z) &\cong& 
\begin{cases}
 \Z, & q = 0, \\
 \Z/2, & q \equiv 1 \mod 4, \\
 0, & q \equiv 2 \mod 4, \\
 \Z/6, & q \equiv 3 \mod 4, \\
 0, & q \equiv 0 \mod 4, \medspace q > 0;
 \end{cases} 
 & \quad &
  \Homol_q({\mathcal{S}}_3; \thinspace \Z/3) &\cong& 
\begin{cases}
 \Z/3, & q = 0, \\
 0, & q \equiv 1 \mod 4, \\
 0, & q \equiv 2 \mod 4, \\
 \Z/3, & q \equiv 3 \mod 4, \\
 \Z/3, & q \equiv 0 \mod 4, \medspace q > 0;
 \end{cases} 
 &\quad & \Homol_q({\mathcal{S}}_3; \thinspace \Z/2) &\cong& \Z/2, q \in \N \cup \{0\};\\
&  \Homol_q({\mathcal{A}}_4; \thinspace \Z) &\cong& 
\begin{cases}
 \Z, & q = 0, \\
 (\Z/2)^k \oplus \Z/3, & q = 6k+1, \\
 (\Z/2)^k \oplus \Z/2, & q = 6k+2, \\
 (\Z/2)^k \oplus \Z/6, & q = 6k+3, \\
 (\Z/2)^k , & q = 6k+4, \\
 (\Z/2)^k \oplus \Z/2 \oplus \Z/6, & q = 6k+5, \\
 (\Z/2)^{k+1}, & q = 6k+6. \\
 \end{cases} 
& \qquad &
\Homol_q({\mathcal{A}}_4; \thinspace \Z/2) &\cong&
 \begin{cases}
\Z/2, & q = 0, \\
(\Z/2)^{2k}, & q = 6k+1, \\
(\Z/2)^{2k+1}, & q = 6k+2, \\
(\Z/2)^{2k+2}, & q = 6k+3, \\
(\Z/2)^{2k+1} , & q = 6k+4, \\
(\Z/2)^{2k+2}, & q = 6k+5, \\
(\Z/2)^{2k+3}, & q = 6k+6. \\
\end{cases} 
 &\quad & \Homol_q({\mathcal{A}}_4; \thinspace \Z/3) &\cong &\Z/3, q \in \N \cup \{0\}.
\end{alignat*}
\normalsize
\end{Lem}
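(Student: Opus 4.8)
The statement splits into two logically independent halves: the classification of the finite subgroups, and the homology tables. I would dispatch the classification by a trace argument grafted onto Klein's list, then compute the homology group by group, treating the abelian cases directly and the groups $\mathcal{S}_3$, $\mathcal{A}_4$ by a Sylow/transfer decomposition.

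\textbf{Classification.} The finite subgroups of $\PSL_2(\C)$ are classically (Klein) the cyclic groups $\Z/n$, the dihedral groups, and the three exceptional groups $\mathcal{A}_4$, $\mathcal{S}_4$, $\mathcal{A}_5$. To cut this list down I would bound element orders arithmetically. A torsion element $g\in\SL_2(\mathcal{O}_{-m})$ is diagonalizable with eigenvalues $\zeta,\zeta^{-1}$ roots of unity, so its trace $\zeta+\zeta^{-1}=2\cos\theta$ is a \emph{real} algebraic integer of $\rationals[\sqrt{-m}\thinspace]$, hence a rational integer in $\{-2,-1,0,1,2\}$. The values $\pm 2$, $0$, $\pm 1$ force the image in $\Gamma$ to have order $1$, $2$, $3$ respectively, so every element of a finite subgroup has order $1$, $2$ or $3$. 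Consequently the only cyclic possibilities are $\Z/2$ and $\Z/3$; a dihedral $D_n$ contains $\Z/n$ and so has $n\le 3$, giving $\Z/2$, $\mathcal{D}_2$ and $\mathcal{S}_3$; and of the exceptional groups only $\mathcal{A}_4$ survives, since $\mathcal{S}_4$ contains an order-$4$ element and $\mathcal{A}_5$ an order-$5$ element. This is exactly the listed family.

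\textbf{The abelian groups.} For $\Z/n$ I would read off $\Homol_*(\Z/n;\thinspace\Z)$ from the standard $2$-periodic free resolution, noting that over $\Z/n$ every differential vanishes, whence $\Homol_q(\Z/n;\thinspace\Z/n)\cong\Z/n$ in all degrees. For $\mathcal{D}_2\cong\Z/2\times\Z/2$ I would invoke the K\"unneth theorem: over $\Z/2$ one gets $\Homol_q(\mathcal{D}_2;\thinspace\Z/2)=\bigoplus_{i+j=q}\Z/2=(\Z/2)^{q+1}$; over $\Z$ the extra $\operatorname{Tor}$ summand in K\"unneth yields the two parity-dependent exponents $\tfrac{q+3}{2}$ and $\tfrac{q}{2}$ after a short count; and over $\Z/3$ everything in positive degree dies because $\gcd(2,3)=1$.

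\textbf{The groups $\mathcal{S}_3$ and $\mathcal{A}_4$, and the main obstacle.} Here I would decompose $p$-primarily via the Cartan--Eilenberg stable-element theorem $\Homol_*(G;\thinspace\Z)_{(p)}\cong\Homol_*(P;\thinspace\Z)^{\mathrm{st}}$. For $\mathcal{S}_3$ the Sylow $2$-subgroup $\Z/2$ is self-normalizing, so its whole homology is stable ($\Z/2$ in each odd degree), while the normal Sylow $3$-subgroup $\Z/3$ has the complementary involution acting by inversion, so the $3$-part survives only where the inversion sign is $+1$, namely in degrees $\equiv 3 \bmod 4$; assembling gives the stated period-$4$ answer, and the $\Z/2$- and $\Z/3$-coefficient versions follow from the same invariance count plus universal coefficients. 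For $\mathcal{A}_4\cong\mathcal{D}_2\rtimes\Z/3$ the Sylow $3$-subgroup is self-normalizing, so the $3$-part is simply $\Homol_*(\Z/3;\thinspace\Z)$ ($\Z/3$ in every odd degree), whereas the $2$-part is the invariant ring $\Homol_*(\mathcal{D}_2;\thinspace\Z)^{\Z/3}$, with the generator cyclically permuting the three involutions. I expect this last computation to be where all the real work lies: the period jumps from $2$ to $6$, and one must track how $\Z/3$ mixes the K\"unneth summands. The cleanest route is through mod-$2$ coefficients, where $\Z/3$ acts on $H^1(\mathcal{D}_2;\F_2)=\F_2^2$ as an order-$3$ element of $\GL_2(\F_2)$ permuting $x,y,x+y$, and a Molien-series calculation $\tfrac{1}{3}\bigl(\tfrac{1}{(1-t)^2}+\tfrac{2}{1+t+t^2}\bigr)$ reproduces the dimensions $1,0,1,2,1,2,3,\dots$ of the table; transporting this back to integral homology via the Bockstein and universal coefficients then yields the $(\Z/2)^k$-type exponents. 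The conceptual content is routine; the genuine difficulty is the bookkeeping of these exponents for $\mathcal{A}_4$.
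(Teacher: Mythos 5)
Your proposal is correct in substance, but note that the paper never proves this lemma at all: it is imported verbatim from Schwermer--Vogtmann, so there is no internal proof to match. Measured against the standard arguments (and against the toolkit this paper \emph{does} deploy in the proof of its companion lemma on induced maps), your route is a legitimate independent derivation and in places more structural. The trace argument is exactly right: torsion in $\SL_2(\mathcal{O}_{-m})$ has trace $\zeta+\zeta^{-1}\in\mathcal{O}_{-m}\cap\R=\Z$, bounded by $2$, so orders in $\PSL_2$ are $1,2,3$, and Klein's list collapses to the stated family (your argument gives the ``only'' direction, which is all the lemma asserts; realization is visible in the paper's stabilizer computations). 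The periodic resolutions for $\Z/n$, the K\"unneth count for $\mathcal{D}_2$ (your exponents $\tfrac{q+3}{2}$ and $\tfrac{q}{2}$ check out: two tensor terms plus $\tfrac{q-1}{2}$ Tor terms in odd degree, $\tfrac{q}{2}$ tensor terms in even degree), and the Swan/stable-element reductions for $\mathcal{S}_3$ and $\mathcal{A}_4$ are all sound: the self-normalizing Sylow $2$ in $\mathcal{S}_3$ and Sylow $3$ in $\mathcal{A}_4$ contribute their full homology, the inversion acting by $(-1)^i$ on $\Homol_{2i-1}(\Z/3;\thinspace\Z)$ produces exactly the period-$4$ pattern, and your Molien series does simplify to $\frac{1+t^3}{(1-t^2)(1-t^3)}$, whose coefficients $1,0,1,2,1,2,3,2,3,\dots$ agree with the table's mod-$2$ column for $\mathcal{A}_4$. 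Schwermer--Vogtmann obtain the same tables by more hands-on means (explicit resolutions and the Lyndon--Hochschild--Serre spectral sequence for the extensions $1\to\Z/3\to\mathcal{S}_3\to\Z/2\to1$ and $1\to\mathcal{D}_2\to\mathcal{A}_4\to\Z/3\to1$, the same device the present paper uses for its Lemma on the maps induced by subgroup inclusions); what your Sylow-theoretic packaging buys is a uniform treatment of both nonabelian groups and a closed-form generating function in place of degree-by-degree bookkeeping.

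Two points deserve an explicit sentence each to be airtight. First, Molien's theorem is a characteristic-zero statement; it applies here because $|\Z/3|$ is invertible in $\F_2$, so the representation lifts (Brauer lift, or Maschke over $\Z_2$) and invariant dimensions are preserved under reduction, with $\det(1-tg)=(1-\omega t)(1-\omega^2 t)=1+t+t^2$ for the order-$3$ element. Second, the step ``transport back to integral homology via Bockstein and universal coefficients'' silently uses that the $2$-primary part of $\Homol_q(\mathcal{A}_4;\thinspace\Z)$ has exponent $2$ --- without this, the mod-$2$ Betti numbers would not distinguish $\Z/4$ from $(\Z/2)^2$. You do have this fact in hand, since that $2$-part is by your own reduction a subgroup of $\Homol_q(\mathcal{D}_2;\thinspace\Z)$, which is elementary abelian in positive degrees; once it is stated, the recursion $d_q=r_q+r_{q-1}$ from the universal coefficient theorem determines the integral exponents uniquely, and they agree with the table. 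Neither point is a gap in the mathematics, only in the write-up.
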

Using the Universal Coefficient Theorem, we see that in degrees $q \geq 1$, 
the homology with trivial $\Z/4$--coefficients is isomorphic to the homology with trivial $\Z/2$--coefficients for the groups listed above.
The stabilizers of the points inside $\Hy$ are finite and hence of the above listed types. 
The stabilizers of the singular points are isomorphic to $\Z^2$, which has homology $ \Homol_q(\Z^2; \thinspace \Z) \cong
$\scriptsize$ \begin{cases}
0, & q \ge 3, \\
\Z, & q = 2, \\
\Z^2, &  q= 1.
\end{cases} $ \normalsize
\\
The maps induced on homology by inclusions of the stabilizers determine the $d^1$-differentials of the spectral sequence from section \ref{SpecSeq}.
\begin{obs}\label{obs}
The three images in $\Homol_2( {\mathcal{D}}_2; \thinspace  \Z/2)$ of the non-trivial element of $\Homol_2(\Z/2; \thinspace  \Z/2)$ under the maps induced by the inclusions of the three order-2-subgroups of ${\mathcal{D}}_2$ are linearly independent, but the three images of the non-trivial element of $\Homol_2( \Z/2; \thinspace  \Z/4)$ are linearly dependent in $\Homol_2( {\mathcal{D}}_2; \thinspace  \Z/4)$.\\
More precisely, there is a canonical basis for $\Homol_2( {\mathcal{D}}_2; \thinspace  \Z/2) \cong (\Z/2)^3$ coming from the resolution for $\mathcal{D}_2$ used by \cite{SchwermerVogtmann}, associated to the decomposition $\mathcal{D}_2\cong\Z/2\times\Z/2$. One checks by direct calculation that in this basis, the inclusions of the three order-2-subgroups in ${\mathcal{D}}_2$ induce the images
\begin{center} \scriptsize 
$\left\{\mathbf{0}, \left( \begin{array}{c} 1 \\ 0 \\ 0 \end{array} \right) \right\}$,
$\left\{\mathbf{0}, \left( \begin{array}{c} 1 \\ 1 \\ 1 \end{array} \right) \right\}$, \normalsize and \scriptsize
$\left\{\mathbf{0}, \left( \begin{array}{c} 0 \\ 0 \\ 1 \end{array} \right) \right\}$
\normalsize in $\Homol_2( {\mathcal{D}}_2; \thinspace  \Z/2)$; \end{center}
and in the basis coming from the same resolution used for $\Z/4$--coefficients these images are 
\begin{center} \scriptsize 
$\left\{\mathbf{0}, \left( \begin{array}{c} 1 \\ 0 \\ 0 \end{array} \right) \right\}$,
$\left\{\mathbf{0}, \left( \begin{array}{c} 1 \\ 0 \\ 1 \end{array} \right) \right\}$, \normalsize and \scriptsize
$\left\{\mathbf{0}, \left( \begin{array}{c} 0 \\ 0 \\ 1 \end{array} \right) \right\}$
\normalsize in $\Homol_2( {\mathcal{D}}_2; \thinspace  \Z/4)$. \end{center}

The difference between the cases $\Z/2$ and $\Z/4$ comes from the behavior of the kernels of the differential maps.
\end{obs}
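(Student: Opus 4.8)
The plan is to run the computation announced in the statement on the explicit free resolution underlying the canonical basis, namely the tensor product of the two standard periodic resolutions of the cyclic factors, and to track how the cycle representing the generator of $\Homol_2(\Z/2;M)$ behaves for $M=\Z/2$ versus $M=\Z/4$. First I would fix for each factor $\Z/2=\langle s\rangle$, resp. $\langle t\rangle$, the $2$-periodic free resolution over $\Z[\Z/2]$ whose differentials alternate between multiplication by $s-1$ (resp. $t-1$) and by the norm $s+1$ (resp. $t+1$), with degree-$n$ generator $f_n$ (resp. $g_n$). Tensoring these over $\Z$ gives a free $\Z[\mathcal{D}_2]$-resolution whose degree-$2$ part has the three generators $e_{2,0}=f_2\otimes g_0$, $e_{1,1}=f_1\otimes g_1$, $e_{0,2}=f_0\otimes g_2$; their images after tensoring with the coefficient module are the canonical basis of the statement. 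Writing $u=f_1\otimes g_0$ and $v=f_0\otimes g_1$ for the degree-$1$ generators, the Leibniz rule for the tensor differential records
\begin{align*}
\partial e_{2,0} &= (s+1)\,u, &
\partial e_{1,1} &= (s-1)\,v-(t-1)\,u, &
\partial e_{0,2} &= (t+1)\,v,
\end{align*}
together with the analogous degree-$3$ formula $\partial(f_2\otimes g_1)=(s+1)(f_1\otimes g_1)+f_2\otimes(t-1)g_0$.

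Next I would pin down the generator of $\Homol_2(\Z/2;M)$ for each coefficient module. On $M$ with trivial action, $s-1$ and $t-1$ act by $0$ while $s+1$ and $t+1$ act by multiplication by $2$. Hence for $M=\Z/2$ all differentials vanish, so $\Homol_2(\mathcal{D}_2;\Z/2)=(\Z/2)^3$ has $\{e_{2,0},e_{1,1},e_{0,2}\}$ as basis and $f_2$ is already a cycle. For $M=\Z/4$ the differentials no longer vanish: $f_2$ has boundary $2f_1\neq 0$, so the generator of the order-two group $\Homol_2(\Z/2;\Z/4)$ is the cycle $2f_2$ rather than $f_2$; and in the $\mathcal{D}_2$-complex the element $2e_{1,1}$ is a boundary, being the image of the degree-$3$ generator $f_2\otimes g_1$, hence it vanishes in homology. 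This contrast between a vanishing and a multiplication-by-$2$ differential is exactly the ``behavior of the kernels of the differential maps'' named in the statement, and it is the conceptual heart of the calculation.

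Then I would compute the three induced maps at the chain level. For the two coordinate inclusions the chain map is the evident one, $f_n\mapsto f_n\otimes g_0$ and $g_n\mapsto f_0\otimes g_n$, carrying the $\Homol_2$-generator to $e_{2,0}$, resp. $e_{0,2}$. For the diagonal subgroup $\langle st\rangle$ with generator $h_n$, I would build a chain map $\Phi$ lifting the inclusion via the comparison theorem \cite{Weibel}*{2.2.6}: from $h_0\mapsto f_0\otimes g_0$ and the identity $st-1=t(s-1)+(t-1)$ one obtains $\Phi_1(h_1)=t\,u+v$, and solving $\partial\Phi_2(h_2)=(st+1)(t\,u+v)=(s+t)\,u+(st+1)\,v$ in the group ring gives
$$
\Phi_2(h_2)=e_{2,0}-e_{1,1}+s\,e_{0,2}.
$$
Assembling this $\Phi_2$ correctly inside $\Z[\mathcal{D}_2]$ — tracking the Leibniz signs and the group-ring coefficient $s$ on $e_{0,2}$ before reducing by the trivial action — is the one genuinely delicate point and the main obstacle of the proof.

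Finally I would read off the images. Since the action is trivial, $s$ becomes the identity, so $\Phi_2(h_2)$ reduces to $e_{2,0}-e_{1,1}+e_{0,2}$. For $\Z/2$-coefficients the generator is $[h_2]$, whose image is $e_{2,0}+e_{1,1}+e_{0,2}=(1,1,1)$; together with the coordinate images $(1,0,0)$ and $(0,0,1)$ this triple is a basis of $(\Z/2)^3$, giving linear independence. For $\Z/4$-coefficients the generator is $[2h_2]$, whose image is $2e_{2,0}-2e_{1,1}+2e_{0,2}=2e_{2,0}+2e_{1,1}+2e_{0,2}$; since $2e_{1,1}$ vanishes in homology this equals $(1,0,1)$ in the canonical basis, while the coordinate images are again $(1,0,0)$ and $(0,0,1)$. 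As $(1,0,0)+(0,0,1)=(1,0,1)$, the three $\Z/4$-images are linearly dependent, which completes the argument.
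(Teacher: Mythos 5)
Your computation is correct and follows exactly the route the paper indicates: a direct calculation on the tensor product of the two periodic resolutions associated to $\mathcal{D}_2\cong\Z/2\times\Z/2$, with the chain map $\Phi_2(h_2)=e_{2,0}-e_{1,1}+s\,e_{0,2}$ for the diagonal inclusion verified against the Koszul differentials, and the key point that the generator of $\Homol_2(\Z/2;\Z/4)$ is $[2f_2]$ while $2e_{1,1}=\partial(f_2\otimes g_1)$ dies in homology, reproducing the paper's vectors $(1,1,1)$ and $(1,0,1)$ exactly. Since the paper merely asserts ``one checks by direct calculation,'' your proposal is a correct filling-in of precisely that calculation.
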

\begin{Lem}[Schwermer/Vogtmann \cite{SchwermerVogtmann}] \label{inducedMaps} 
Let $C\in \{\Z\}\cup \{\Z/n: n = 2,3,4\}$. Consider group homology with trivial $C$-coefficients. Then the following hold. 
\begin{enumerate}
\item Any inclusion $\Z/2 \to {\mathcal{S}}_3$ induces an injection on homology.
\item An inclusion $\Z/3 \to {\mathcal{S}}_3$ induces an injection on homology in degrees congruent to $3$ or $0 \mod 4$, and is zero otherwise.
\item Any inclusion $\Z/2 \to {\mathcal{D}}_2$ induces an injection on homology in all degrees.
\item An inclusion $\Z/3 \to {\mathcal{A}}_4$ induces injections on homology in all degrees.
\item In the case $C\in \{\Z\}\cup \{\Z/n: n =2,3\}$, an inclusion $\Z/2 \to {\mathcal{A}}_4$ induces injections on homology in degrees greater than $1$, and is zero on $\Homol_1$.\\
In the case $C=\Z/4$, the same holds in homology degrees $q\ne 2$.\\
An inclusion $\Z/2 \to {\mathcal{A}}_4$ induces the zero map on $\Homol_2(-; \thinspace  \Z/4)$.
\end{enumerate}
\end{Lem}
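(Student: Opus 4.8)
The plan is to handle the five assertions with one common toolkit---the transfer together with the double coset formula---and then to isolate the genuinely delicate point, the inclusion $\Z/2\to\mathcal{A}_4$ with $\Z/4$--coefficients in even degrees. For a finite-index subgroup $H\le G$ I write $i_*$ for the inclusion-induced map and $\mathrm{tr}$ for the transfer; the double coset formula (as in \cite{Brown}) expresses $\mathrm{tr}\circ i_*\colon \Homol_*(H)\to\Homol_*(H)$ as a sum over $H\backslash G/H$ whose identity term is the identity and whose term at $g$ factors through $\Homol_*(H\cap gHg^{-1})$, all naturally in the coefficient module $C$. First I would dispose of the Sylow cases (1) and (4): here $\Z/2\le\mathcal{S}_3$ and $\Z/3\le\mathcal{A}_4$ are self-normalising Sylow subgroups whose distinct conjugates meet trivially, so every non-identity double coset contributes a term factoring through $\Homol_q(\{1\})=0$ for $q>0$; hence $\mathrm{tr}\circ i_*=\mathrm{id}$ in positive degrees and $i_*$ is split injective, for every $C$.

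For (2) the subgroup $\Z/3$ is normal of index $2$ in $\mathcal{S}_3$, and since $2$ acts invertibly on the $3$--torsion groups $\Homol_q(\Z/3;C)$ the Lyndon--Hochschild--Serre spectral sequence collapses to $\Homol_q(\mathcal{S}_3;C)\cong \Homol_q(\Z/3;C)_{\Z/2}$, with $i_*$ the edge projection to coinvariants. The relevant conjugation is inversion; a Bockstein computation shows it acts by $+1$ exactly when $q\equiv 3\pmod 4$ (trivially where the group vanishes) for $\Z$--coefficients, and by $+1$ exactly when $q\equiv 0,3\pmod 4$ for $\Z/3$--coefficients. Where the sign is $+1$ the projection is an isomorphism; where it is $-1$ the coinvariants vanish, giving the stated pattern. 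For (3) there is no transfer issue: each of the three order-$2$ subgroups of $\mathcal{D}_2\cong\Z/2\times\Z/2$ is a retract of $\mathcal{D}_2$ (the inclusion is split by a homomorphism $\mathcal{D}_2\to\Z/2$), so $i_*$ is split injective for all $C$.

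The crux is (5), the inclusion $\Z/2\hookrightarrow V\trianglelefteq\mathcal{A}_4$, with $V=\mathcal{D}_2$ the normal Sylow $2$--subgroup and $\mathcal{A}_4/V\cong\Z/3$ permuting the three involutions of $V$ cyclically. On $\Homol_1$ the image lands in $\mathcal{A}_4^{\mathrm{ab}}\cong\Z/3$, where every $2$--torsion class dies, so $i_*=0$ there. For the mod-$2$ statement in degrees $q\ge2$ I would argue in cohomology: $\Homol^*(\mathcal{A}_4;\Z/2)\cong(\Z/2[x,y])^{\Z/3}$, and restriction to the chosen $\Z/2$ is reduction modulo the annihilating linear form, say $x$. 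The invariants $c_1=x^2+xy+y^2$ and $d=x^3+x^2y+y^3$ restrict to $\bar y^2$ and $\bar y^3$, so the products $c_1^a d^{\varepsilon}$ realise every degree $q\ge2$ with nonzero restriction; hence restriction is onto $\Homol^q(\Z/2;\Z/2)$ for $q\ge2$ and zero for $q=1$ (as $\Homol^1(\mathcal{A}_4;\Z/2)=0$), and dually $i_*$ is injective on mod-$2$ homology for $q\ge2$. The cases $C=\Z$ and $C=\Z/3$ then follow by naturality of the mod-$2$ reduction (the former reducing isomorphically in the odd degrees where $\Homol_*(\Z/2;\Z)$ is nonzero, the latter being vacuous).

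For $C=\Z/4$ the odd degrees present no new difficulty, since the reduction $\Homol_q(\Z/2;\Z/4)\to\Homol_q(\Z/2;\Z/2)$ is an isomorphism for $q$ odd and injectivity for $q\ge3$ odd follows as above. The \textbf{main obstacle} is the even degrees: here $\Homol_q(\Z/2;\Z/4)\cong\Z/2$ is the $\mathrm{Tor}$ summand, on which the mod-$2$ reduction \emph{vanishes}, so the naturality trick breaks down. I would instead compare the two long exact Bockstein sequences attached to $0\to\Z/2\to\Z/4\to\Z/2\to0$ for $\Z/2$ and for $\mathcal{A}_4$, equivalently tracking the image under the factorisation $\Homol_q(\Z/2;\Z/4)\to\Homol_q(V;\Z/4)\to\Homol_q(V;\Z/4)_{\Z/3}$: injectivity holds precisely when the three cyclically permuted images in $\Homol_q(V;\Z/4)$ are not forced to sum to zero. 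Observation \ref{obs} settles the decisive degree $q=2$, where the three images are \emph{dependent} with vanishing sum, so $3[\,\cdot\,]=0$ forces the class to die in the coinvariants (as $3$ is invertible on a $2$--group), giving $i_*=0$ on $\Homol_2(-;\Z/4)$; for the higher even degrees the same $\Z/3$--module bookkeeping, fed by Lemma \ref{finiteSubgroups} and Observation \ref{obs}, shows the images stay independent and $i_*$ stays injective. Making this bookkeeping uniform in $q$---and explaining why $q=2$ is the sole failure---is the hard part of the argument.
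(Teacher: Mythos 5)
Most of your argument is correct, and in several places it takes a genuinely different route from the paper's. For (1) and (4) the paper simply observes that the composite $\Z/2\to\mathcal{S}_3\to\Z/2$, respectively $\Z/3\to\mathcal{A}_4\to\Z/3$, is an isomorphism, so the inclusion is split; your transfer/double-coset argument proves the same splitting uniformly in $C$ at the cost of quoting Brown. Your (2) and (3) coincide with the paper's (Lyndon--Hochschild--Serre collapse onto $\Z/2$-coinvariants with the inversion action carefully tracked; split extension $\Z/2\to\mathcal{D}_2\to\Z/2$). For the mod-$2$ half of (5) you replace the paper's factorisation through $\Homol_q(\mathcal{D}_2;\Z/2)$ and its coinvariants by the invariant-theoretic description $\Homol^*(\mathcal{A}_4;\F_2)\cong\F_2[x,y]^{\Z/3}$, the restrictions of $c_1$ and $d$, and duality; this is complete and arguably cleaner than the paper's sketch, which defers details to the first author's thesis. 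At the crux $q=2$, $C=\Z/4$, your argument is an equivalent reformulation of the paper's: the paper uses observation \ref{obs} to write $\alpha_*(x)=\gamma_*(x)-(\phi^2)_*\gamma_*(x)\in\mathrm{Im}(1-\phi_*)$, while you note the three conjugate images sum to zero, so $3[\alpha_*(x)]=0$ in the coinvariants and $3$ is invertible on $2$-torsion; since $\F_2[\Z/3]$ is semisimple (the norm projects onto the trivial summand), your criterion ``the class survives to the coinvariants iff the three cyclically permuted images do not sum to zero'' is exactly right.

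There is, however, one genuine gap: the case $C=\Z/4$ in even degrees $q\ge 4$ is asserted, not proved. You write that ``the same $\Z/3$--module bookkeeping, fed by Lemma \ref{finiteSubgroups} and Observation \ref{obs}, shows the images stay independent,'' and yourself flag this as the hard part --- but observation \ref{obs} concerns only degree $2$, and lemma \ref{finiteSubgroups} gives only isomorphism types, so nothing you cite computes the three images in $\Homol_q(\mathcal{D}_2;\Z/4)$ for $q\ge4$. (Also, ``independent'' is more than you need; by your own criterion the issue is whether their sum vanishes.) The Bockstein comparison you gesture at does close the gap uniformly, and you should have carried it out: for even $q\ge2$ the natural universal-coefficient sequence identifies the generator of $\Homol_q(\Z/2;\Z/4)\cong\Z/2$ with the generator of the Tor-quotient $\mathrm{Tor}\bigl(\Homol_{q-1}(\Z/2;\Z),\Z/4\bigr)$; since the sequence for $\mathcal{D}_2$ is $\Z/3$-equivariant and coinvariants are right exact, the image of this class in $\Homol_q(\mathcal{D}_2;\Z/4)_{\Z/3}$ maps onto its image in $\mathrm{Tor}\bigl(\Homol_{q-1}(\mathcal{D}_2;\Z),\Z/4\bigr)_{\Z/3}\cong\Homol_{q-1}(\mathcal{D}_2;\Z)_{\Z/3}$, so the claim reduces to the integral statement in odd degree $q-1\ge3$, which you have already proved; for $q=2$ this detection reduces to degree $q-1=1$, where the integral map is zero --- precisely why $q=2$ is the sole failure. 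With that reduction made explicit your proof would be complete (and more self-contained than the paper's, whose own sketch dismisses $q\ne2$ with ``can be directly deduced''); as written, the even-degree $\Z/4$ case is a plan rather than an argument.
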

\begin{proof}[Sketch of proof]
Schwermer and Vogtmann prove this for $C = \Z$, and leave it to the reader in the case $C = \Z/2$.  Details for the latter case can be found in~\cite{RahmPhDthesis}. In the following, we are going to give the main arguments.
\begin{enumerate}
\item This follows for all coefficients from the fact that the group extension \\
$1\to\Z/3\to\mathcal{S}_3\to \Z/2\to 1$ has the property that any inclusion, $\Z/2\to\mathcal{S}_3$, composed with its quotient map is the identity on $\Z/2$.
 \item The assertion is trivial for $C\in\{\Z/2, \Z/4\}$ because then $\Homol_q(\Z/3; \thinspace C) = 0$ for $q\geq 1$ by the Universal Coefficient Theorem. For $C=\Z/3$, one computes the Lyndon/Hochschild/Serre spectral sequence with $\Z/3$-coefficients associated to the extension \mbox{$1\to\Z/3\to\mathcal{S}_3\to \Z/2\to 1$.} Its $E^2$-term $E^2_{p,q}=\Homol_p(\Z/2; \thinspace  \Homol_q(\Z/3; \thinspace  \Z/3))$ is concentrated in the column $p=0$; special care has to be taken with the action of $\Z/2$ on $\Homol_q(\Z/3; \thinspace  \Z/3)$. So $E^2_{p,q}\cong E^\infty_{p,q}$, and the assertion follows from a computation of the map \mbox{$\Homol_q(\Z/3; \thinspace \Z/3)\to\Homol_0(\Z/2; \thinspace \Homol_q(\Z/3; \thinspace \Z/3))$,} i.~e. the projection onto the coinvariants.
\item Similar to (1), this is an immediate consequence of the fact that $1\to\Z/2\to\mathcal{D}_2\to\Z/2\to 1$ splits.
\item Similar to (1) and (3), this follows for all coefficients from the fact that any inclusion $\Z/3\to\mathcal{A}_4$ composed with the unique quotient map $\mathcal{A}_4\to\Z/3$ is an isomorphism, hence induces an isomorphism in homology.
\item The assertion is trivial for $C = \Z/3$ because then $\Homol_q(\Z/2; \thinspace C) = 0$ for $q\geq 1$. For \mbox{$C\in \{\Z/2, \Z/4\}$,} one considers the factorization of the inclusion $\Z/2\to\mathcal{A}_4$ as $\Z/2\to\mathcal{D}_2\to\mathcal{A}_4$ where the first map is one out of three possible inclusions $\Z/2\to\mathcal{D}_2$, denoted by $\alpha, \beta, \gamma$. By (3), $\alpha, \beta$ and $\gamma$ induce injections. Furthermore, one considers the spectral sequence with $C$-coefficients associated to the extension $1\to \mathcal{D}_2\to \mathcal{A}_4\to \Z/3\to 1$. Similar to the case considered in (2), the $E^2$-term $E^2_{p,q}=\Homol_p(\Z/3; \thinspace  \Homol_q(\mathcal{D}_2; \thinspace  C))$ is concentrated in the column $p=0$, thus $E^2_{p,q}\cong E^\infty_{p,q}$, and the map $\Homol_q(\Z/2; \thinspace C)\to\Homol_q(\mathcal{A}_4; \thinspace C)$ is written as the composition
 $$
 \Homol_q(\Z/2; \thinspace  C)\to\Homol_q(\mathcal{D}_2; \thinspace C)\to\Homol_0(\Z/3; \thinspace  \Homol_q(\mathcal{D}_2; \thinspace  C))\cong\Homol_q(\mathcal{A}_4; \thinspace C)
$$
where the first map is $\alpha_*, \beta_*$ or $\gamma_*$ and the second one is the projection onto the $\Z/3$-coinvariants. From this, the statement can be directly deduced for $q\ne 2$. For the case $q=2$, denote the generator of $\Homol_2(\Z/2; \thinspace  C)$ by $x$. The action of $\Z/3$ on $\mathcal{D}_2$ comes from conjugation within $\mathcal{A}_4$ and permutes the three non-trivial elements. There is an automorphism $\phi$ of $\mathcal{D}_2$ given by the action of a generator of $\Z/3$, such that $\phi\circ\alpha=\beta$. Then $\phi_*(\alpha_*(x))=(\phi\circ\alpha)_*(x)=\beta_*(x)$ and $\phi_*(\beta_*(x))=\gamma_*(x)$. For $C=\Z/4$, observation \ref{obs} implies that $\alpha_*(x)=\gamma_*(x)-\beta_*(x)=\gamma_*(x)-(\phi^2)_*(\gamma_*(x))$, and thus $\alpha_*(x)$ is in $\text{Im}(1-\phi_*)=\text{Im}(1-(\phi^2)_*)$, hence is zero in the coinvariants. Therefore, the same holds for $\beta_*(x)$ and $\gamma_*(x)$, and the assertion follows. For $C=\Z/2$, one computes with the help of observation \ref{obs} that $\alpha_*(x)\not\in\text{Im}(1 -\phi_*)$; thus, the same holds for $\beta_*(x)$ and $\gamma_*(x)$ and the assertion follows.
\end{enumerate}
\end{proof}

\subsection{The mass formula for the equivariant Euler characteristic}$\text{ }$\\
We will use the Euler characteristic to check the geometry of the quotient $\Gamma \backslash X$.
Recall the following definitions and proposition, which we include for the reader's convenience.
\begin{df}[Euler characteristic]
Suppose $\Gamma'$ is a torsion-free group. Then we define its Euler characteristic as 
$$\chi(\Gamma')=\sum_i (-1)^i \dim \Homol_i(\Gamma'; \thinspace \rationals).$$
Suppose further that $\Gamma'$ is a torsion-free subgroup of finite index in a group $\Gamma$.
Then we define the {\em Euler characteristic} of $\Gamma$  as 
$$\chi(\Gamma)= \frac{\chi(\Gamma')}{[\Gamma : \Gamma']}.$$
This is well-defined because of \cite{Brown}*{IX.6.3}.
\end{df}
\begin{df}[Equivariant Euler characteristic]
Suppose $X$ is a $\Gamma$-complex such that
\begin{enumerate}
\item
every isotropy group $\Gamma_\sigma$ is of finite homological type;
\item
$X$ has only finitely many cells mod $\Gamma$.
\end{enumerate}
Then we define the $\Gamma$-{\em equivariant Euler characteristic} of $X$ as
$$ \chi_\Gamma(X) := \sum\limits_\sigma (-1)^{\mathrm{dim}\sigma} \chi(\Gamma_\sigma),$$
where $\sigma$ runs over the orbit representatives of cells of $X$.
\end{df}
\begin{prop}[\cite{Brown}*{IX.7.3 e'}]
Suppose $X$ is a $\Gamma$-complex such that $ \chi_\Gamma(X)$ is defined.
If $\Gamma$ is virtually torsion-free, then $\Gamma$ is of finite homological type and $ \chi(\Gamma) =  \chi_\Gamma(X).$
\end{prop}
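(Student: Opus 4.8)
The plan is to reduce everything to a torsion-free subgroup, on which the rational-homology Euler characteristic recalled in the definition above is literally the invariant we want to compute, and then to transport the equality back to $\Gamma$ by multiplicativity. Since $\Gamma$ is virtually torsion-free, fix a torsion-free subgroup $\Gamma'$ of finite index $n := [\Gamma : \Gamma']$. For every orbit representative $\sigma$ the group $\Gamma'_\sigma := \Gamma' \cap \Gamma_\sigma$ is a torsion-free subgroup of finite index in $\Gamma_\sigma$; because $\chi_\Gamma(X)$ is defined, each $\Gamma_\sigma$ is of finite homological type, so $\Gamma'_\sigma$ has finite cohomological dimension and finitely generated homology in each degree with coefficients in any $\Z$-module that is finitely generated over $\Z$.

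First I would show that $\Gamma$ is of finite homological type. Viewing $X$ as a $\Gamma'$-complex, it still has only finitely many cells modulo $\Gamma'$ (as $\Gamma'$ has finite index), and it is contractible by Corollary \ref{contractibility}. Hence the equivariant spectral sequence of Section \ref{SpecSeq}, run for $\Gamma'$ and with coefficients in an arbitrary $\Z\Gamma'$-module $M$ finitely generated over $\Z$, takes the form $E^1_{p,q} = \bigoplus_{\sigma \in \Gamma'\backslash X^p} \Homol_q(\Gamma'_\sigma; M) \Rightarrow \Homol_{p+q}(\Gamma'; M)$, where the abutment is group homology precisely because $X$ is contractible. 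Only finitely many terms are non-zero, each being finitely generated, so $\Homol_*(\Gamma'; M)$ is finitely generated and bounded. Thus $\Gamma'$, and therefore $\Gamma$, is of finite homological type.

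Next I would compute $\chi(\Gamma')$ by specializing to $M = \rationals$. Now all entries of the spectral sequence are finite-dimensional $\rationals$-vector spaces and all but finitely many vanish, so the alternating sum $\sum_{p,q}(-1)^{p+q}\dim_\rationals E^r_{p,q}$ is the same on every page $r$ and equals the corresponding alternating sum of the abutment. Reading the definition of the Euler characteristic recalled above at the torsion-free groups $\Gamma'$ and $\Gamma'_\sigma$, this gives
$$\chi(\Gamma') = \sum_{p,q}(-1)^{p+q}\dim_\rationals E^1_{p,q} = \sum_{p}(-1)^p\sum_{\sigma\in\Gamma'\backslash X^p}\chi(\Gamma'_\sigma) = \chi_{\Gamma'}(X),$$
all sums being finite thanks to the finite-homological-type property just established.

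Finally I would return to $\Gamma$ by multiplicativity of the equivariant Euler characteristic. Decomposing each $\Gamma$-orbit of cells into the $\Gamma'$-orbits indexed by the double cosets $\Gamma'\backslash\Gamma/\Gamma_\sigma$, and using the invariance of $\chi$ under conjugation together with the relation $\chi(H') = [H:H']\,\chi(H)$ for a finite-index subgroup and the orbit-counting identity $\sum_{\Gamma'\backslash\Gamma/\Gamma_\sigma}[\Gamma_\sigma : \Gamma_\sigma \cap g^{-1}\Gamma' g] = [\Gamma:\Gamma']$, one finds $\chi_{\Gamma'}(X) = [\Gamma:\Gamma']\,\chi_\Gamma(X)$. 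Combining this with the previous display and the definition $\chi(\Gamma) = \chi(\Gamma')/[\Gamma:\Gamma']$ yields $\chi(\Gamma) = \chi_{\Gamma'}(X)/n = \chi_\Gamma(X)$. The step I expect to be the main obstacle is precisely the insistence on passing to $\Gamma'$: the identity $\chi = \sum_q (-1)^q \dim_\rationals \Homol_q(-;\rationals)$ holds only for torsion-free groups — a finite group $G$ has $\chi(G) = 1/|G|$ rather than $1$ — so the clean rational spectral-sequence computation cannot be applied to $\Gamma$ directly, and the care lies in the finite-index bookkeeping, namely verifying the multiplicativity $\chi_{\Gamma'}(X) = [\Gamma:\Gamma']\,\chi_\Gamma(X)$ and that all spectral-sequence entries are finite-dimensional and eventually zero.
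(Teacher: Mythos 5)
Your proof is correct, and it is essentially the argument the paper relies on: the paper does not prove this proposition at all, but imports it verbatim from Brown (IX.7.3~(e')), whose proof runs exactly along your lines --- pass to a torsion-free subgroup $\Gamma'$ of finite index, use the equivariant spectral sequence for $\Gamma'$ acting on $X$ (with $\rationals$-coefficients, all entries finite-dimensional and almost all zero) to get finite homological type and $\chi(\Gamma')=\chi_{\Gamma'}(X)$, then conclude by the multiplicativity $\chi_{\Gamma'}(X)=[\Gamma:\Gamma']\,\chi_\Gamma(X)$, for which your double-coset decomposition and the identity $\sum_{g\in\Gamma'\backslash\Gamma/\Gamma_\sigma}[\Gamma_\sigma:\Gamma_\sigma\cap g^{-1}\Gamma' g]=[\Gamma:\Gamma']$ are the standard bookkeeping. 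Two small points deserve care. First, the equality $\chi(\Gamma)=\chi_\Gamma(X)$ is simply false without contractibility (or at least $\rationals$-acyclicity) of $X$ --- the trivial group acting on two points has $\chi_\Gamma(X)=2\neq 1$ --- so contractibility is a suppressed standing hypothesis of Brown's section, not something you may supply by citing Corollary~\ref{contractibility}, which concerns only Fl\"oge's specific complex; in a proof of the general proposition it must be assumed (it is of course satisfied in the paper's application, and your use of it to identify the abutment with $\Homol_*(\Gamma';M)$ is the right move). Second, ``finite homological type'' requires finite vcd in addition to finitely generated homology, and your homological spectral sequence only yields a finite homological dimension bound; to finish, either run the cohomological analogue or observe directly that $\mathrm{cd}\,\Gamma'\le \dim X+\max_\sigma \mathrm{cd}\,\Gamma'_\sigma$, both terms being finite since $X$ has finitely many cells mod $\Gamma$ and each $\Gamma'_\sigma$ is torsion-free of finite homological type. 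Neither point disturbs the structure of your argument.
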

Let now $\Gamma$ be $\text{PSL}_2\bigl(\mathcal{O}_{\rationals\left[\sqrt{-m}\thinspace\right]}\bigr)$. Then the above proposition applies to $X$ taken to be Fl\"oge's (or still, Mendoza's) $\Gamma$-equivariant deformation retract of $\Hy$, because $\Gamma$ is virtually torsion-free by Selberg's lemma. Using $\chi(\Gamma_\sigma) = \frac{1}{\mathrm{card}(\Gamma_\sigma)}$ for $\Gamma_\sigma$ finite, the fact that the singular points have stabilizer $\Z^2$, and the torsion-free Euler characteristic 
$$\chi(\Z^2) = \sum\limits_i (-1)^i \mathrm{rank}_\Z (\Homol_i \Z^2) = 1-2+1 = 0,$$ we get the formula
$$ \chi(\Gamma) = 
\sum\limits_\sigma (-1)^{\mathrm{dim}\sigma} \frac{1}{\mathrm{card}(\Gamma_\sigma)},$$
where $\sigma$ runs over the orbit representatives of cells of $X$ with finite stabilizers.
\begin{prop} \label{Euler_characteristic_vanishes}
The Euler characteristic $\chi(\Gamma)$ vanishes. 
\end{prop}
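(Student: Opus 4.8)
The plan is to reduce the computation to a torsion-free finite-index subgroup and then to exploit that the symmetric space $\Hy$ is odd-dimensional. By Selberg's lemma, $\Gamma$ contains a torsion-free subgroup $\Gamma'$ of finite index. Being torsion-free, $\Gamma'$ acts freely and properly discontinuously on the contractible space $\Hy$, so the quotient $M := \Gamma' \backslash \Hy$ is a model for $K(\Gamma', 1)$. Hence $\Homol_i(\Gamma'; \rationals) \cong \Homol_i(M; \rationals)$ for all $i$, and $\chi(\Gamma') = \chi(M)$, the topological Euler characteristic of the $3$-manifold $M$.

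First I would record that $M$ is a complete hyperbolic $3$-manifold of finite volume. Since $\Gamma$ is non-cocompact (it has cusps), $M$ is non-compact, and the structure of its cusps exhibits $M$ as the interior of a compact $3$-manifold $\overline{M}$ whose boundary $\partial\overline{M}$ is a disjoint union of $2$-tori, one for each cusp. As $M$ is homotopy equivalent to $\overline{M}$, and Euler characteristic is a homotopy invariant, we get $\chi(M) = \chi(\overline{M})$.

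The core of the argument is then to show $\chi(\overline{M}) = 0$. Form the double $D\overline{M} := \overline{M} \cup_{\partial\overline{M}} \overline{M}$ along the boundary; this is a closed $3$-manifold, and since it is odd-dimensional, Poincar\'e duality forces $\chi(D\overline{M}) = 0$. On the other hand, inclusion--exclusion gives $\chi(D\overline{M}) = 2\thinspace\chi(\overline{M}) - \chi(\partial\overline{M})$, and $\partial\overline{M}$ is a disjoint union of tori, so $\chi(\partial\overline{M}) = 0$. Therefore $2\thinspace\chi(\overline{M}) = 0$, whence $\chi(\overline{M}) = 0$ and $\chi(\Gamma') = 0$. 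Finally, by the definition of the Euler characteristic of $\Gamma$, we obtain $\chi(\Gamma) = \chi(\Gamma')/[\Gamma : \Gamma'] = 0$.

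I expect the only delicate point to be the identification of $M$ with the interior of a compact manifold with toral boundary, i.e. the description of the cusp cross-sections; this is standard for finite-volume hyperbolic $3$-manifolds but is where a careful reference is needed. Alternatively, one can bypass all $3$-manifold topology by invoking Harder's Gauss--Bonnet theorem: $\chi(\Gamma)$ is a fixed multiple of the integral over $\Gamma\backslash\Hy$ of the Euler--Gauss--Bonnet form of the symmetric space, and this form vanishes identically because $\Hy$ has odd dimension, giving $\chi(\Gamma) = 0$ at once.
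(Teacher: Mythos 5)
Your argument is correct, but it takes a genuinely different route from the paper's primary proof. The paper deduces the vanishing arithmetically: by Harder's computation, $\chi(\SL_2(\mathcal{O}_K)) = \zeta_K(-1)$ for the Dedekind zeta function of $K = \rationals[\sqrt{-m}\thinspace]$; then $\chi(\Gamma) = 2\,\zeta_K(-1)$ since $\Gamma$ is the quotient of $\SL_2(\mathcal{O}_K)$ by a central subgroup of order two (citing Bass), and the functional equation together with the absence of real embeddings of an imaginary quadratic field forces $\zeta_K(-1) = 0$. You instead argue topologically: after passing via Selberg's lemma to a torsion-free $\Gamma'$ of finite index, you identify $\Gamma'\backslash\Hy$ with the interior of a compact $3$-manifold $\overline{M}$ with toral boundary (the thick--thin decomposition for finite-volume hyperbolic $3$-manifolds is the reference you correctly flag as needed; note your argument is in fact insensitive to whether the cusp cross-sections are tori or Klein bottles, since both have vanishing Euler characteristic), and then the doubling trick $\chi(D\overline{M}) = 2\,\chi(\overline{M}) - \chi(\partial\overline{M})$ together with the vanishing of $\chi$ for closed odd-dimensional manifolds gives $\chi(\overline{M}) = 0$. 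Your approach is more elementary and more general --- it works verbatim for any non-cocompact lattice in $\PSL_2(\C)$ and requires no analytic number theory --- whereas the paper's zeta-function proof ties the result to the arithmetic of $K$ and generalizes in a different direction (to $\SL_n$ via $\prod_{j=2}^n \zeta_K(1-j)$). Your closing alternative, invoking Harder's Gauss--Bonnet theorem and the vanishing of the Euler--Poincar\'e form in odd dimension, is precisely the second proof the paper itself records in the remark following the proposition, where it is likewise noted to apply to any cofinite arithmetically defined subgroup of $\PSL(2,\C)$.
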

\begin{rem} \label{vanish}
Thus, the formula
$$
0  = \sum\limits_\sigma (-1)^{\mathrm{dim}\sigma} \frac{1}{\mathrm{card}(\Gamma_\sigma)},
$$
allows to check the joint data of the geometry of the fundamental domain, cell stabilizers and cell identifications.
\end{rem}
\begin{proof}[Proof of proposition \ref{Euler_characteristic_vanishes}]
Denote by $\zeta_K$ the Dedekind $\zeta$-function associated to the number field \\
\mbox{ $K := \rationals\left[\sqrt{-m}\thinspace\right]$.} 
Brown \cite{Brown}*{below (IX.8.7)} deduces the following from Harder's result \cite{Harder}*{p. 453}: 
$$ \chi(SL_n(\mathcal{O}_K)) = \prod\limits_{j=2}^n \zeta_K (1-j),$$
so especially we have $ \chi(SL_2(\mathcal{O}_K)) = \zeta_K (-1).$ 
As $\Gamma$ is a quotient of $SL_2(\mathcal{O}_K)$ by a group of order two, it follows \cite{Bass} that
 $$ \chi(\Gamma) = 2\cdot\chi(SL_2(\mathcal{O}_K)) = 2\cdot\zeta_K (-1).$$
Using the functional equation of $ \zeta_K $ \cite{Narkiewicz} and the fact that $K$ has no real embeddings because it is imaginary quadratic, 
we get  $ \zeta_K (-1) = 0.$
\end{proof}

\begin{rem}
One can prove the above proposition without using the Dedekind zeta function.
This alternative proof applies to \emph{any} cofinite arithmetically defined subgroup $\Gamma$ of $\PSL(2,\C)$. Let $\Gamma'$ denote a torsion-free subgroup of $\Gamma$ of finite index. It is the main theorem of Harder's article on the Gauss-Bonnet theorem \cite{Harder} that the Euler characteristic of $\Gamma'$ is its covolume with respect to the Euler-Poincar\'e form $\mu$ on $\Hy$, i.~e. $\chi(\Gamma')=\int_Y d\mu$, where $Y$ is a fundamental domain for the action of $\Gamma'$ on $\Hy$. This extends the classical Gauss-Bonnet theorem from the theory of the Euler-Poincar\'e form, see \cite{Serre}*{paragraph 3} (where the theorem is hidden as the existence assertion of the Euler-Poincar\'e measure) to non-cocompact but cofinite discrete subgroups. The measure $\mu$ is a fundamental datum associated to the symmetric space, without reference to any discrete group. In \cite{Serre}*{paragraph 3,2a} it is shown that $\mu=0$ on any odd-dimensional space. Since $\text{dim }\Hy=3$, 
 we have $\chi(\Gamma')= \chi(\Gamma) = 0$.
\end{rem}

\section{Computations of the integral homology of $\text{PSL}_2\bigl(\mathcal{O}_{\rationals\left[\sqrt{-m}\thinspace\right]}\bigr)$}
Throughout this section, the action on the homology coefficients is trivial because the stabilizers fix the cells pointwise. We mean $\Z$-coefficients wherever we do not mention the coefficients. 
Throughout, we label the singular point in the fundamental domain by $s$ and use the notation
$$ \otimes_\sigma  :=  \otimes_{\Z[\Gamma_\sigma]}.$$ 
We write $\mathcal{D}_2$ for the Klein four group, $\mathcal{S}_3$ for the permutation group on three objects and $\mathcal{A}_4$ for the alternating group on four objects. 
\\
We have \mbox{$\Gamma = \text{PSL}_2(\mathcal{O}_{\rationals\left[\sqrt{-m}\thinspace\right]})
= \text{PSL}_2(\Z[\omega])$}
 with $\omega := \sqrt{-m}$ in the cases $m=5, 6, 10,13$. The coordinates in hyperbolic space of the vertices of the fundamental domains have been computed by Bianchi \cite{Bianchi}. 
There, they are listed up to complex conjugation for $m=5,6,15$; and for $m=10,13$, the reader has to divide out the reflection called \emph{riflessione impropria} by Bianchi.

\subsection{$m=13$}
\begin{wrapfigure}{r}{25mm}
  \centering 
  \includegraphics[width=22mm]{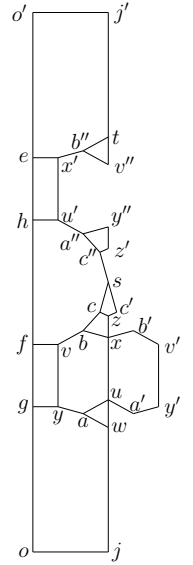} 
  \caption{The fundamental domain for $m=13$} \label{fundamental domain 13}
\end{wrapfigure}
We make the following definitions. \\
\scriptsize
$\begin{array}{lllllllll}
\\
A&:=& \pm
\begin{pmatrix} 9
& 
7 \omega
\\ 
\omega
& 
-10
\end{pmatrix},
&B&:= &\pm \begin{pmatrix} -2
-\omega
& 
2
-\omega
\\ 
4
 & 
2
+1 \omega\end{pmatrix},
&C&:=&\pm \begin{pmatrix} -1
- \omega
 & 
8
- \omega
 \\ 
3
 & 
1
+2 \omega\end{pmatrix},
\\ \\
D&:=& \pm \begin{pmatrix} 5
 & 
 2 \omega
 \\ 
  \omega
 & 
-5\end{pmatrix},
&E&:=& \pm \begin{pmatrix} - \omega
 & 
6
 \\ 
2
 & 
 \omega\end{pmatrix},
&J&:= &\pm \begin{pmatrix}  & 
1
 \\ 
-1
 & \end{pmatrix},
\\ \\
S&:= &\pm \begin{pmatrix}  & -1 \\ 1 & 1\end{pmatrix},
&K&:= &\pm \begin{pmatrix} 11+4\omega
 &
-17+7\omega 
 \\
-8+\omega 
 & 
-10-3\omega\end{pmatrix},
&M&:= &\pm \begin{pmatrix} 4
-2 \omega
 & 
12
+ \omega
 \\ 
4
+ \omega
 & 
-4
+2 \omega\end{pmatrix},
\\ \\
U&:= &\pm \begin{pmatrix}  1 & \omega \\ & 1 \end{pmatrix},
&V&:=&
\pm \begin{pmatrix}
- \omega
 & 
6
- \omega
 \\ 
2
 & 
2
+ \omega
\end{pmatrix},
&W&:=&
\pm \begin{pmatrix}
14
- \omega
 & 
13
+6 \omega
 \\ 
2 \omega
 & 
-12
+ \omega
\end{pmatrix},
\\ \\
P &:=&\thinspace V^{-1}D, 
&T&:=&\thinspace P^{-1}S^2,
&R&:=&\thinspace TU^{-1}S^2U. \\ \\
\end{array}$ \normalsize  \\
\label{identifications13}
Vertices with the same letter in the fundamental domain displayed in figure \ref{fundamental domain 13} are identified by the action of $\Gamma$, for instance, $y$ is identified with $y'$ and $y''$ and so on. This yields relations between the matrices in the same way as shown by \cite{Floege}. Amongst these relations, we will use \mbox{$ T = CKCA(CKC)^{-1}$,} $V^{-1} = CAC^{-1}M$ and $S^2 = BS^{-1}BS $ in our further calculations, in particular in the computation of the $d^2$-differential. Note that the 2-cell $(y'',a'',u',x',b'',v'')$ is identified with the 2-cell $(y,a,u,x,b,v)$, hence only one of them can be in the fundamental domain. 
The matrix $U$ acts as a vertical translation by $-\omega$. Furthermore, we will use the identifications $ C \cdot x' = x$, $ U \cdot j' = j$, $ C \cdot y' = y $ and
\mbox{$ K \cdot z = z' $.} \\
Amongst the edge identifications, we will use \mbox{$CAC^{-1} \cdot (c,z) = (c',z)$,} \quad \mbox{$V^{-1} \cdot (s,c)=(s,c'),$} \quad
\mbox{$ CAC^{-1} \cdot (b,x) = (b',x) $,} \quad \mbox{$V^{-1}\cdot (b,v) = (b',v') $,} \quad
\mbox{$P\cdot (y,v) = (y',v') $,} \quad \mbox{$S^{2}\cdot(a,y) =(a',y')$, and}
\mbox{$B\cdot (a,u) = (a',u) $. 

There are seventeen orbits of vertices, which have the following stabilizers.}
$$ \scriptsize
\begin{array}{llllll}
& \Gamma_{o} &=&  &              & \langle J | J^2 = 1 \rangle \cong {\mathbb{Z} /2},\\ 
& \Gamma_{a} &=&  &              & \langle S^{-1}BS | (S^{-1}BS)^2 = 1 \rangle \cong {\mathbb{Z} /2},\\ 
& \Gamma_{b} &=&  \Gamma_{c} &=& \langle M | M^2 = 1 \rangle \cong {\mathbb{Z} /2} ,\\
& \Gamma_{u} &=&  &              & \langle B | B^2 = 1 \rangle \cong {\mathbb{Z} /2},\\ 
& \Gamma_{v} &=&  &              & \langle D | D^2 = 1 \rangle \cong {\mathbb{Z} /2},\\ 
& \Gamma_{f} &=&  &              & \langle D, E | D^2 = E^2 = (DE)^2 = 1 \rangle \cong {{\mathcal{D}}_2},\\ 
& \Gamma_{h} &=&  &              & \langle E, AU^{-1}JU | E^2 = (AU^{-1}JU)^2 = (EAU^{-1}JU)^2 = 1 \rangle \cong {{\mathcal{D}}_2},\\ 
& \Gamma_{e} &=&  &              & \langle A, U^{-1}JU | A^3= (U^{-1}JU)^2 = (AU^{-1}JU)^2 = 1 \rangle \cong {{\mathcal{S}}_3},\\ 
& \Gamma_{g} &=&  &              & \langle J, T | J^2 = T^3 = (JT)^2 = 1 \rangle \cong {{\mathcal{S}}_3},\\ 
& \Gamma_{t} &=&  &              & \langle R, U^{-1}SU | R^2 = (U^{-1}SU)^3 = (RU^{-1}SU)^2 = 1 \rangle \cong {{\mathcal{S}}_3},\\ 
& \Gamma_{w} &=&  &              & \langle B, S | B^2 = S^3 = (BS)^2 = 1 \rangle \cong {{\mathcal{S}}_3},\\ 
& \Gamma_{j} &=&  &              & \langle S | S^3 = 1 \rangle \cong {\mathbb{Z} /3},\\ 
& \Gamma_{x} &=&  \Gamma_{z} &=& \langle CAC^{-1} | (CAC^{-1})^3 = 1 \rangle  \cong {\mathbb{Z} /3},\\ 
& \Gamma_{y} &=&  &              & \langle T | T^3 = 1 \rangle \cong {\mathbb{Z} /3},\\
& \Gamma_{s} &=&  &              & \langle  V,W | VW = WV \rangle  \cong {\mathbb{Z}^2}. 
\end{array} $$ \normalsize
There are twenty-eight orbits of edges. \\
The edge stabilizers isomorphic to $\Z/3$ are given on the chosen representatives as
\scriptsize $$
\begin{array}{llll}
&\Gamma_{(e,x')} &=& 
\langle A |A^3 = 1 \rangle
 \cong {\mathbb{Z} /3}, \\
&\Gamma_{(x,z)} &=& 
\langle CAC^{-1} | (CAC^{-1})^3 = 1 \rangle 
 \cong {\mathbb{Z} /3},\\
&\Gamma_{(g,y)} &=& 
\langle T | T^3 = 1 \rangle
 \cong {\mathbb{Z} /3},\\
&\Gamma_{(j,w)} &=& \langle S| S^3 = 1 \rangle
 \cong {\mathbb{Z} /3},\\
 &\Gamma_{(t,j')} &=& 
\langle U^{-1}SU| (U^{-1}SU)^3 = 1 \rangle
 \cong {\mathbb{Z} /3}, \\
&\Gamma_{(y',z')} &=& 
\langle KCA(KC)^{-1} | (KCA(KC)^{-1})^3 = 1 \rangle
 \cong {\mathbb{Z} /3},
 \end{array} 
$$ \normalsize
and the edge stabilizers isomorphic to $\Z/2$ are given on the chosen representatives as
 \scriptsize $$
\begin{array}{llll}
&\Gamma_{(f,v)} &=& 
\langle D | D^2 = 1 \rangle
 \cong {\mathbb{Z} /2},\\
&\Gamma_{(h,u')} &=& 
\langle EAU^{-1}JU | (EAU^{-1}JU)^2 = 1 \rangle
 \cong {\mathbb{Z} /2},\\
&\Gamma_{(t,b'')} &=& 
\langle R | R^2 = 1 \rangle
 \cong {\mathbb{Z} /2},\\
&\Gamma_{(w,a)} &=&
\langle S^{-1}BS | (S^{-1}BS)^2 = 1 \rangle
 \cong {\mathbb{Z} /2},\\
&\Gamma_{(b,c)} &=& 
\langle M | M^2 = 1 \rangle
 \cong {\mathbb{Z} /2},\\
&\Gamma_{(a'',c'')} &=& 
\langle C^{-1}S^{-1}BSC | (C^{-1}S^{-1}BSC)^2 = 1 \rangle 
 \cong {\mathbb{Z} /2},\\
&\Gamma_{(v',t)} &=& 
\langle RU^{-1}SU | (RU^{-1}SU)^2 = 1 \rangle
 \cong {\mathbb{Z} /2},\\
&\Gamma_{(w,u)} &=& 
\langle B | B^2 = 1 \rangle
 \cong {\mathbb{Z} /2},\\
&\Gamma_{(h,e)} &=&
\langle AU^{-1}JU | (AU^{-1}JU)^2 = 1 \rangle
 \cong {\mathbb{Z} /2},\\
&\Gamma_{(g,f)} &=& 
\langle DE | (DE)^2 = 1 \rangle
 \cong {\mathbb{Z} /2},\\
&\Gamma_{(f,h)} &=& 
\langle E | E^2 = 1 \rangle
 \cong {\mathbb{Z} /2},\\
&\Gamma_{(o,g)} &=& 
\langle J | J^2 = 1 \rangle 
 \cong {\mathbb{Z} /2},\\
&\Gamma_{(o',e)} &=& 
\langle U^{-1}JU | (U^{-1}JU)^2 = 1 \rangle
 \cong {\mathbb{Z} /2}.
\end{array} 
$$ \normalsize
We find nine edge orbits with the trivial stabilizer, thirteen edge orbit representatives with stabilizer type $\Z/2$, and six with stabilizer type $\Z/3$.
The singular vertex has stabilizer type $\Z^2$, and
there are six vertex orbit representatives with stabilizer type $\Z/2$, two with $\mathcal{D}_2$, four with $\mathcal{S}_3$ and four with $\Z/3$.
\mbox{Furthermore, there are twelve orbits of faces with trivial stabilizers.}
\mbox{The above data gives the $\Gamma$-equivariant Euler characteristic of $X$, in accordance with remark \ref{vanish}:} 
$$ \chi_\Gamma(X) = \frac{6}{2} + \frac{4}{3} +\frac{2}{4} +\frac{4}{6}
-9 -\frac{13}{2} -\frac{6}{3}
+ 12 = 0. $$ 
\subsubsection{The bottom row of the $E^1$-term}$\text{ }$\\
\begin{wrapfigure}{r}{72mm}
\caption{$ (d^1_{1,q})_{(2)}$}  \label{13-by-13} \tiny
$\left(  \begin{array}{*{16}{c}}
1 &  &  &  &  &  &  &  &  & 1 &  &  &  \\ 
 &  &  &  &  &  &  &  &  & \vdots &  &  &  \\ 
 &  &  &  &  &  &  &  &  & 1 & 1 &  &  \\ 
 & 1 &  &  &  &  &  &  & 1 &  &  &  &  \\ 
 & \vdots &  &  &  &  &  &  &  &  &  &  &  \\ 
 & 1 &  &  &  &  &  &  &  &  & 1 &  &  \\ 
 &  &  &  &  &  &  &  &  &  &  & 1 & 1 \\ 
 &  &  &  &  &  &  &  & 1 &  &  &  & 1 \\ 
 &  &  &  &  &  &  &  &  & 1 &  & 1 &  \\ 
 &  & 1 &  &  &  & 1 &  &  &  &  &  &  \\ 
 & 1 &  &  &  &  &  & 1 &  &  &  &  &  \\ 
1 &  &  &  &  &  & 1 &  &  &  &  &  &  \\ 
 &  &  & 1 &  &  &  & 1 &  &  &  &  &  \\ 
 &  &  & 1 &  & 1 &  &  &  &  &  &  &  \\ 
 &  & 1 &  & 1 &  &  &  &  &  &  &  &  \\ 
 &  &  &  & 1 & 1 &  &  &  &  &  &  & 
\end{array} \right)  $
\normalsize
\end{wrapfigure}
We obtain for the row $q = 0$ in the columns $p = 0, 1, 2$:

$
 \Z^{17} \xleftarrow{\ d^1_{1,0}\ } \Z^{28}\xleftarrow{\ d^1_{2,0}\ } \Z^{12}, 
$
\\
where the only occurring elementary divisor is 1, with multiplicity sixteen
for $d^1_{1,0}$, and with multiplicity ten \mbox{for $d^1_{2,0}$.}

\subsubsection{The odd rows of the $E^1$-term}$\text{ }$\\

We set the goal to determine the morphism \\
$
\bigoplus_{\sigma \in \Gamma\backslash X^0} \Homol_q (\Gamma_\sigma)
\xleftarrow{\ d^1_{1,q}\ }
\bigoplus_{\sigma \in \Gamma\backslash X^1} \Homol_q(\Gamma_\sigma)
$ for \mbox{odd $q\ge 1$.} By lemma \ref{finiteSubgroups}, the torsion only occurs at the primes 2 and~3. For each $m$, we are going to treat these primes separately. For each of them, lemma \ref{inducedMaps} specifies the effect on homology of the vertex inclusion, \\
\mbox{thus allowing us
to determine the matrix of $d^1_{1,q}$ in the basis determined by the edge respectively} \\
 \mbox{vertex
stabilizers  in a row-by-row fashion.
For $m=13$ and odd $q$, the map $d^1_{1,q}$ is on the 2-primary} \\
\mbox{part a homomorphism ${\mathbb{(Z} /2)^{q+13}}
\longleftarrow
(\Z/2)^{13}
$ given by the $q+$13-by-13 matrix in figure \ref{13-by-13},
where we} \\
\mbox{replace the dotted entries `` \scriptsize$\vdots$ \normalsize ''  by $\frac{q-1}{2}$ lines with a ``1''
in the column of the dots, and zeroes in the} \\
\mbox{rest of these lines. Therefore, we have to distinguish the case $q=1$, where $d^1_{1,q}$ has rank 12,} \\
 and the case $q \ge 3$, where it has \mbox{rank 13.}
\\
On the 3-primary part, $d^1_{1,q}$ is a homomorphism
$ \begin{cases}
{\mathbb{(Z} /3)^{4}}
\longleftarrow
(\Z/3)^{6}
&
\mathrm{for} \thinspace q \equiv 1 \mod 4,
\\
{\mathbb{(Z} /3)^{8}}
\longleftarrow
(\Z/3)^{6} 
&
\mathrm{for} \thinspace q \equiv 3 \mod 4.
\end{cases} $ 

\newpage
\begin{wrapfigure}{r}{72mm}
\caption{$ (d^1_{1,q})_{(3)}$}  \label{3-primary part} \scriptsize
$ \begin{array}{c|cccccc}
& (e,x') & (g,y) & (x,z) & (y',z') & (j,w) & (t,j')
\\
\hline
e & -\alpha & 0 & 0 & 0 & 0 & 0
\\
x &  1 & 0 & -1 & 0 & 0 & 0
\\
g & 0 &  -\alpha & 0 & 0 & 0 & 0
\\
y & 0 & 1 & 0 & - 1 & 0 & 0
\\
z & 0 & 0 & 1 &  1 & 0 & 0
\\
j & 0 & 0 & 0 & 0 & -1 &  1
\\
w & 0 & 0 & 0 & 0 & \alpha & 0
\\
t & 0 & 0 & 0 & 0 & 0 & -\alpha,
\end{array}$ \normalsize
\end{wrapfigure}

It is given by the matrix displayed in figure \ref{3-primary part},
where $\alpha = 1$ for $q \equiv 3 \mod 4$ and $\alpha = 0$ else. This matrix has full rank 6 (injectivity) for $q \equiv 3 \mod 4$, and rank 4 (surjectivity) for $q \equiv 1 \mod 4$.
For $q = 1$, there is an additional module $\Homol_1(\Gamma_{s}) \cong {\mathbb{Z}^2}$ on the target side, which can not be hit because the edge stabilizers are only torsion.
\begin{rem}
So, the 3-torsion in $\Homol_1(\Gamma)$ has already been killed by the $d^1$ differential. This is useful for showing that the map
\\
$ \Homol_1(\mathrm{PSL}_2(\Z)) \rightarrow \Homol_1(\Gamma)$
\\
is not injective. In fact, the matrix $S$ of order 3 defines a non-zero element in the abelianization of $\mathrm{PSL}_2(\Z)$ but becomes subject to the relation $S^2 = BS^{-1}BS$ in $\Gamma$ where $B$ is the matrix of order two defined above. Thus, the class of $S$ is zero in $\Gamma^{\text{ab}}$.
\end{rem}
\subsubsection{The even rows of the $E^1$-term}$\text{ }$\\
There is a zero map arriving at
$\bigoplus\limits_{\sigma \in \Gamma\backslash X^0} \Homol_q (\Gamma_\sigma) \cong ({\mathbb{Z} /2})^q
$
for $q$ bigger than 2, and respectively at 
$$\bigoplus\limits_{\sigma \in \Gamma\backslash X^0} \Homol_2 (\Gamma_\sigma) \cong {\mathbb{Z}} \oplus ({\mathbb{Z} /2})^2.
$$

\subsubsection{The $E^2$-term}$\text{ }$\\
In the rows with $q \ge 2$, $E^2_{p,q}$ is concentrated in the columns $p=0$ and $p=1$ given as follows:
\scriptsize
$$
\begin{array}{{ll|cccc}}
q = 4k +1, & q \ge 5 &  (\Z/2)^q & (\Z/3)^2 \\
q\ \text{even}, & q \ge 4 & (\Z/2)^q & 0    \\
q = 4k +3, & q \ge 3 & (\Z/3)^2 \oplus (\Z/2)^q & 0 \\
\dots & & \dots & \dots \\
q=2 & & \Z \oplus (\Z/2)^2 & 0
\end{array}
$$
\normalsize
In the rows $q=0$ and $q=1$,  $E^2_{p,q}$ is concentrated in the columns $p=0,1,2$:
\scriptsize
$$ \xymatrix{
q=1 & \Z^2 \oplus (\Z/2)^2 & ({\mathbb{Z} /3})^2 \oplus {\mathbb{Z} /2} & 0 \\
q=0 &{\mathbb{Z}} & {\mathbb{Z}}^2 & \ar[ull]_{d^2} {\mathbb{Z}^2} 
}$$
\normalsize

\subsubsection{The differential $d^2$}$\text{ }$\\
\label{d2differential}
The only nontrivial $d^2$-arrow is determined on the $E^0$-level by the following maps connecting 
$E^0_{2,0}$ with $E^0_{0,1}$:
\scriptsize
$$\xymatrix{
\bigoplus\limits_{\sigma\in\Gamma\backslash X^0} \Theta_1 \otimes_\sigma \Z 
 &
\ar[l]_{1 \otimes \delta } 
\bigoplus\limits_{\sigma\in\Gamma\backslash X^1} \Theta_1 \otimes_\sigma \Z   
\ar[d]^{d_\Theta \otimes 1} 
\\
&
\bigoplus\limits_{\sigma\in\Gamma\backslash X^1} \Theta_0 \otimes_\sigma \Z 
&
\bigoplus\limits_{\sigma\in\Gamma\backslash X^2} \Theta_0 \otimes_\sigma \Z  
\ar[l]_{1 \otimes \delta } 
}$$
\normalsize
where $d_\Theta$ is the differential of the bar resolution $\Theta_\bullet$ for $\Gamma$, 
and $\delta$ is the differential of Fl\"oge's cellular complex.
The  generators of the abelian group $E^2_{2,0} \cong {\mathbb{Z}}^2 $ are represented by the face
$(c,s,c',z)$ and the union of two faces $(b,x,b',v',y',a',u,a,y,v) =: F$, whose quotients by $\Gamma$ are homeomorphic to 2-spheres. \\
Using the identifications stated in \ref{identifications13}, we compute that the above $d^2$-arrow is induced by $$\delta\left((c,s,c',z)\right) = 
(CAC^{-1}-1) \cdot (c,z) +(V^{-1}-1) \cdot (s,c)$$ and
\scriptsize $$
\delta\left((b,x,b',v',y',a',u,a,y,v) \right)
= \thinspace\thinspace (CAC^{-1} -1) \cdot (x, b) 
 + (V^{-1}-1) \cdot (b,v) 
 +(P-1)\cdot(v,y)
+ (S^2 -1) \cdot (y, a)
+(B -1)\cdot(a,u).
$$ \normalsize
The lift $1 \otimes_F 1 $ 
in $E^0_{2,0}$ of the generator of $E^2_{2,0}$ represented by \\
\mbox{$F = (b,x,b',v',y',a',u,a,y,v)$} is mapped as follows:
\scriptsize
$$\xymatrix@C=2em{
{
\begin{array}{c}
 (1,CAC^{-1})  \otimes_b 1  
 -(1,CAC^{-1})  \otimes_x 1  \\
 + (1,V^{-1}) \otimes_v 1 
 - (1,V^{-1}) \otimes_b 1 \\
  +(1,P) \otimes_y 1  
  -(1,P) \otimes_v 1  \\
+ (1,S^2) \otimes_a 1 
- (1,S^2) \otimes_y 1 \\
 +(1,B)\otimes_u 1 
 -(1,B)\otimes_a 1  
\end{array}
}
& & &
\ar[lll]_{1 \otimes \delta } 
{
\begin{array}{c}
 (1,CAC^{-1})  \otimes_{(x, b)}  1  \\
 + (1,V^{-1}) \otimes_{(b,v)}  1 \\
  +(1,P) \otimes_{(v,y)}  1  \\
+ (1,S^2) \otimes_{(y, a)}  1 \\
 +(1,B)\otimes_{(a,u)}  1 
\end{array}
}
\ar[d]^{d_\Theta \otimes 1} 
&
\\
& & &
{
\begin{array}{c}
 (CAC^{-1} -1)  \otimes_{(x, b)}  1  \\
 + (V^{-1}-1) \otimes_{(b,v)}  1 \\
  +(P-1) \otimes_{(v,y)}  1  \\
+ (S^2 -1) \otimes_{(y, a)}  1 \\
 +(B -1)\otimes_{(a,u)}  1 
\end{array}
}
& & &
1 \otimes_F 1  
\ar[lll]_{1 \otimes \delta } 
}
$$
\normalsize
\subsubsection*{The passage to $E^1$}$\text{ }$\\
We attribute the symbols $t_\sigma$ to the part of this sum lying in 
$\Theta_1 \otimes_\sigma \Z $:
$$
\begin{array}{lll}
&t_x &:=  -(1,CAC^{-1})  \otimes_x 1,\\
&t_b &:=  (1,CAC^{-1})  \otimes_b 1   - (1,V^{-1}) \otimes_b 1,\\
&t_v &:=   (1,V^{-1}) \otimes_v 1  -(1,P) \otimes_v 1,\\
&t_y &:= (1,P) \otimes_y 1  - (1,S^2) \otimes_y 1,\\
&t_a &:=  (1,S^2) \otimes_a 1  -(1,B)\otimes_a 1,\\
&t_u &:= (1,B)\otimes_u 1.\\
\end{array}
$$
With the formula in our corollary \ref{Corollary}, we find the classes $\bar{t_\sigma}$ in 
$\Homol_1(\Theta_* \otimes_\sigma \Z)$ as follows: \\
As $ V^{-1}M = CAC^{-1} $ and 
$\Gamma_b = \langle M|  \quad M^2 = 1 \rangle $,
$$
t_b =  
[CAC^{-1}]  \otimes_b 1   - [V^{-1}] \otimes_b 1
=  [V^{-1}M]  \otimes_b 1   - [V^{-1}] \otimes_b 1
$$ 
gives the cycle $$ \overline{VV^{-1}M} -\overline{VV^{-1}}  = \overline{M} \in 
 \langle \overline{M} | \quad 2\overline{M} = 0 \rangle 
\cong  \Homol_1(\Gamma_b; \Z )  . $$
As $ V^{-1} = PD $ and 
$\Gamma_v = \langle D|  \quad D^2 = 1 \rangle $,
$$
t_v = [V^{-1}] \otimes_v 1  -[P] \otimes_v 1
= [PD] \otimes_v 1  -[P] \otimes_v 1
$$ 
gives the cycle $$ \overline{P^{-1}PD} -\overline{P^{-1}P}  = \overline{D} \in 
 \langle \overline{D} | \quad 2\overline{D} = 0 \rangle 
\cong  \Homol_1(\Gamma_v; \thinspace  \Z )  . $$
As $ S^2 = BS^{-1}BS $ and 
$\Gamma_a = \langle S^{-1}BS|  \quad (S^{-1}BS)^2 = 1 \rangle $,
$$
t_a   
= [S^2] \otimes_a 1  -[B]\otimes_a 1
= [BS^{-1}BS] \otimes_a 1  -[B]\otimes_a 1
$$ 
gives the cycle $$ \overline{B^{-1}BS^{-1}BS} -\overline{B^{-1}B}  = \overline{S^{-1}BS} 
\in  \langle \overline{S^{-1}BS} | \quad 2\overline{S^{-1}BS} = 0 \rangle 
\cong  \Homol_1(\Gamma_a; \thinspace  \Z )  . $$
Finally, $t_u = [B]\otimes_u 1$ gives the cycle $$ \overline{B} \in  \langle \overline{B} | \quad 2\overline{B} = 0 \rangle 
\cong  \Homol_1(\Gamma_u; \thinspace  \Z )  . $$
The term $E^2_{0,1}$ has no 3-torsion, so the 3-torsion part $\bar{t_x} +\bar{t_y}$ of the above sum makes no contribution to the image of $d^2$.\\
The 2-torsion part, $ \overline{t_b} + \overline{t_a} + \overline{t_v} + \overline{t_u} $,
 equals the image 
$$ d^1_{1,1}( \overline{t_{(b,c)}} + \overline{t_{(c'',a'')}} 
+ \overline{t_{(v,f)}} + \overline{t_{(f,h)}} + \overline{t_{(h,u')}} ),$$
where $ \overline{t_\sigma} $ stands for the generator of 
$ \Homol_1(\Gamma_\sigma; \thinspace  \Z ) \cong \Z/2. $ 
Thus it is a boundary and is quotiented to zero on the $E^2$-page.
Hence it makes no contribution either to the image of $d^2$, so we obtain that $ d^2(F) = 0. $\\
The lift $1 \otimes_{(c,s,c',z)}  1 $ 
of the generator $(c,s,c',z)$ 
is mapped as follows:
\scriptsize
$$\xymatrix@C=2em{
{
\begin{array}{c}
 (1,CAC^{-1})  \otimes_z 1  \\
 -(1,CAC^{-1})  \otimes_c 1  \\
 + (1,V^{-1}) \otimes_c 1 \\
 - (1,V^{-1}) \otimes_s 1
\end{array}
}
& &
\ar[ll]_{1 \otimes \delta } 
{
\begin{array}{c}
 (1,CAC^{-1})  \otimes_{(c, z)}  1  \\
 + (1,V^{-1}) \otimes_{(s,c)} 1
\end{array}
}
\ar[d]^{d_\Theta \otimes 1} 
&
\\
& &
{
\begin{array}{c}
 (CAC^{-1} -1)  \otimes_{(c, z)} 1  \\
 + (V^{-1}-1) \otimes_{(s,c)} 1  
\end{array}
}
& &
1 \otimes_{(c,s,c',z)}  1  
\ar[ll]_{1 \otimes \delta } 
}
$$
\normalsize
\subsubsection*{The passage to $E^1$}$\text{ }$\\
We attribute the symbols $t_\sigma$ to the part of this sum lying in 
$\Theta_1 \otimes_\sigma \Z  $:
$$
\begin{array}{lll}
&t_z &:=  (1,CAC^{-1})  \otimes_z 1  ,\\
&t_c &:=   (1,V^{-1}) \otimes_c 1
 -(1,CAC^{-1})  \otimes_c 1 ,\\
&t_s &:=    - (1,V^{-1}) \otimes_s 1.
\end{array}
$$
With the formula in our corollary \ref{Corollary}, we find the classes $\bar{t_\sigma}$ in 
$\Homol_1(\Theta_* \otimes_\sigma \Z )$ as follows:
\\
As $ V^{-1}M = CAC^{-1} $ and 
$\Gamma_c = \langle M|  \quad M^2 = 1 \rangle $,
$$
t_c =  
[V^{-1}] \otimes_c 1 
-[CAC^{-1}]  \otimes_c 1 
=  [V^{-1}] \otimes_c 1 
-[V^{-1}M]  \otimes_c 1 
$$ 
gives the cycle $$ \overline{VV^{-1}} -\overline{VV^{-1}M}  = -\overline{M} \in 
 \langle \overline{M} | \quad 2\overline{M} = 0 \rangle 
\cong  \Homol_1(\Gamma_c; \thinspace  \Z )  . $$
Finally,
$$ t_s 
= -[V^{-1}]\otimes_s 1 $$
gives the cycle
$$ \overline{V} \in  \langle \overline{V}, \overline{W}  \rangle 
\cong  \Homol_1(\Gamma_s; \thinspace  \Z )  \cong \Z^2. $$
The term $E^2_{0,1}$ has no 3-torsion, so the 3-torsion part $\overline{t_z}$ of the above sum makes no contribution to the image of $d^2$. \\
However the 2-torsion part, $ \overline{t_c} = \overline{M}$, passes to the $E^2$-page because no chain of edges can have the single point $c$ as its boundary.
Furthermore, $\overline{V}$ is one of the generators of the free part of $E^2_{0,1} \cong \Z^2 \oplus (\Z/2)^2$,  
so we obtain $ d^2\left((c,s,c',z)\right) = \overline{M}+\overline{V}$, which is of infinite order and has the following property: there is no element $\eta \in E^2_{0,1}$ with $k\eta = \overline{M}+\overline{V}$ for an integer $k>1$. 
As we have seen that $ d^2(F) =0,$ we obtain the quotient $$E^3_{0,1} \cong \Z \oplus (\Z/2)^2.$$
Hence we obtain for integral homology the following short exact sequences:
$$\small
\begin{cases}
 0 \rightarrow (\Z/2)^{q} \rightarrow \Homol_q(\Gamma; \thinspace \Z) \rightarrow (\Z/3)^2 \rightarrow 0
,\quad &  \thinspace q = 4k+2, 
\\
 0 \rightarrow (\Z/2)^{q}   \rightarrow \Homol_q(\Gamma; \thinspace \Z) \rightarrow  0
,\quad &  \thinspace q = 4k +1, 
\\
 0 \rightarrow (\Z/2)^{q} \rightarrow \Homol_q(\Gamma; \thinspace \Z) \rightarrow 0 
,\quad &  \thinspace q = 4k+4, 
\\
 0 \rightarrow (\Z/3)^2 \oplus (\Z/2)^{q}    \rightarrow \Homol_q(\Gamma; \thinspace \Z) \rightarrow  0
,\quad &  \thinspace q = 4k +3, 
\\
 0 \rightarrow \Z \oplus (\Z/2)^2 \rightarrow \Homol_2(\Gamma; \thinspace \Z) 
\rightarrow \Z \oplus (\Z/3)^2 \oplus \Z/2 \rightarrow 0,
\\
 0 \rightarrow \Z\oplus (\Z/2)^2 \rightarrow \Homol_1(\Gamma; \thinspace \Z) \rightarrow \Z^2 \rightarrow 0.
\end{cases}
$$\normalsize 
We will resolve the ambiguity in the torsion part of the group extension $ \Homol_2(\Gamma; \thinspace  \Z)$  by a reflection like the one 
on \cite{SchwermerVogtmann}*{page 587}, 
for which we have to recompute the spectral sequence with $\Z/2$--, $\Z/3$-- and $\Z/4$--coefficients.
The free part is unambiguous, as we can see from tensoring with $\rationals$. 
\subsubsection{The $E^1$-term with $\Z/2$-coefficients}$\text{ }$\\
We can apply the functor $- \otimes \Z/2 $ to the row $q = 0$ and obtain in the columns $p = 0, 1, 2$:
$$
 (\Z/2)^{17} \xleftarrow{\ d^1_{1,0}\ } (\Z/2)^{28}\xleftarrow{\ d^1_{2,0}\ } (\Z/2)^{12}. 
$$
The rest of this row are zeroes. The matrix $d^1_{1,0}$ has rank 16 and the matrix $d^1_{2,0}$ has rank 10. \\
In the rows with $q > 0$, the differential $d^1$ is given by a single arrow $d^1_{1,q}$ from
\\
$E^1_{1,q} \cong (\Homol_q(\Z/2; \thinspace \Z/2))^{13} \oplus (\Homol_q(\Z/3; \thinspace \Z/2))^6 \cong (\Z/2)^{13}$ to
$$
E^1_{0,q} \cong \Homol_q(\Z^2; \thinspace \Z/2) \oplus (\Homol_q(\Z/2; \thinspace \Z/2))^6
\oplus(\Homol_q(\mathcal{D}_2; \thinspace \Z/2))^2 \oplus (\Homol_q(\mathcal{S}_3; \thinspace \Z/2))^4,
$$
and the rest of these rows are zeroes. 
For $q=1$, we have $d^1_{1,1}$ of rank 12 arriving at $E^1_{0,1} \cong (\Z/2)^{16} $. 
For $q \ge 3$, we have $d^1_{1,q}$ of rank 13 arriving at $E^1_{0,q} \cong (\Z/2)^{12+2q} $.
For $q = 2$, we have $d^1_{1,2}$ of rank 13 arriving at $E^1_{0,2} \cong (\Z/2)^{17} $.
The only difficulty in seeing this is to compare the maps from
$\Homol_q(\Z/2; \thinspace \Z/2)$ to $\Homol_q(\mathcal{D}_2; \thinspace \Z/2)$
induced by the different inclusions $\Z/2 \to \mathcal{D}_2$; we use observation \ref{obs} for this purpose.

\subsubsection{The $E^2$-term with $\Z/2$-coefficients}$\text{ }$\\
We obtain in the rows with $q \ge 2$ the $E^2$-term concentrated in the column $p=0$,
\scriptsize
$$
\begin{array}{{l|c}}
q \ge 3 &  \quad({\mathbb{Z} /2)^{2q-1}}\\
q=2 & \quad ({\mathbb{Z} /2})^4, \\
\end{array}
$$
\normalsize
and in the rows $q=0$, $q=1$ it is concentrated in the columns $p=0,1,2$:
\scriptsize
$$ \xymatrix{
q=1 & ({\mathbb{Z} /2)^{4}} & {\mathbb{Z} /2} & 0 \\
q=0 &{\mathbb{Z}}/2 & (\Z/2)^2 & \ar[ull]_{d^2_{2,0} 
} (\Z/2)^2. 
}$$
\normalsize
\bigskip

\subsubsection*{The differential $d^2_{2,0}$ with $\Z/2$-coefficients}$\text{ }$\\
The basis $\{ (c,s,c',z), \thinspace F \}$ of $E^2_{2,0}$ with $\Z$-coefficients induces a
basis of $E^2_{2,0}$ with $\Z/2$-coefficients.
The Universal Coefficient Theorem yields an isomorphism from
$\Homol_1(\Gamma_\sigma; \thinspace  \Z) \otimes_\Z \Z/2$ to
$\Homol_1(\Gamma_\sigma; \thinspace  \Z/2) $,
which we will use to transfer the elements $\overline{t_\sigma} \in \Homol_1(\Gamma_\sigma; \thinspace  \Z)$
computed in subsection \ref{d2differential} to $\Homol_1(\Gamma_\sigma; \thinspace  \Z/2) $. \\
For $d^2_{2,0}((c,s,c',z))$ the computation is as follows. As $\overline{t_c}$ generates $ \Homol_1(\Gamma_c; \thinspace  \Z) \cong \Z/2,$ it is transferred to the generator of 
$ \Homol_1(\Gamma_c; \thinspace  \Z/2) \cong \Z/2.$ 
Since $\overline{t_s}$ can be completed with a second element to a
\mbox{$\Z$-basis} of \mbox{$ \Homol_1(\Gamma_s; \thinspace  \Z) \cong \Z^2,$} 
it is transferred to a nontrivial element of $\Homol_1(\Gamma_s; \thinspace  \Z/2) \cong (\Z/2)^2.$ 
The element $\overline{t_z}$ vanishes because $\Homol_1(\Gamma_z; \thinspace  \Z) \otimes \Z/2 \cong \Z/3 \otimes \Z/2 = 0.$ 
The sum $\overline{t_c} + \overline{t_s}$  is quotiented to a nontrivial element on the $E^2$-page 
because $ \Homol_1(\Gamma_s; \thinspace  \Z/2)$ is not hit by the $d^1$-differential. So $d^2_{2,0}(\langle (c,s,c',z) \rangle ) \cong \Z/2 $. \\
For $d^2_{2,0}(F)$, the computation is as follows. Since the 3-torsion vanishes when tensored with $\Z/2$, the 3-torsion part $\bar{t_x} +\bar{t_y}$ of the sum makes no contribution to the image of $d^2$. 
The 2-torsion part, $ \overline{t_b} + \overline{t_a} + \overline{t_v} + \overline{t_u} $,
 equals the image 
$$ d^1_{1,1}( \overline{t_{(b,c)}} + \overline{t_{(c'',a'')}} 
+ \overline{t_{(v,f)}} + \overline{t_{(f,h)}} + \overline{t_{(h,u')}} ), $$
where $ \overline{t_\sigma} $, $ \sigma \in \{b,a,v,u, (b,c),(c'',a''),(v,f),(f,h),(h,u') \}$ 
is the generator of 
$ \Homol_1(\Gamma_\sigma; \thinspace  {\mathbb{Z}}/2 ) \cong \Z/2. $
Hence it makes no contribution neither, and we obtain $ d^2(F) = 0.$ Thus $d^2_{2,0}$ has rank 1.
 \medskip \\
As $\Z/2$-modules are vector spaces over the field with two elements $\F_2$, the $E^3=E^\infty$-page yields immediately the results. We do an analogous computation with $\Z/3$-- and $\Z/4$--coefficients and obtain
\begin{center} $ \dim_{\F_2}\Homol_q(\Gamma; \thinspace  \Z/2) = \scriptsize
\begin{cases}
2q-1, & q \ge 3, \\
6, & q = 2, \\
5, &  q= 1,
\end{cases} $  \qquad \normalsize
$ \dim_{\F_3}\Homol_q(\Gamma; \thinspace  \Z/3) = \scriptsize
\begin{cases}
2, & q \equiv 0 \medspace \mathrm{or} \medspace 2 \mod 4, \medspace q > 2,\\
4, & q \equiv 3 \mod 4, \\
0, & q \equiv 1  \mod 4, \medspace q > 2;
\end{cases} $
\end{center}
\normalsize
and the exact sequence $ 1 \rightarrow (\Z/2)^5 \rightarrow \Homol_3(\Gamma; \thinspace  \Z/4) \rightarrow \Z/2 \rightarrow 1. $
The short exact sequence 
$$ 1 \rightarrow \Z \oplus (\Z/2)^2 \rightarrow \Homol_2(\Gamma; \thinspace \Z) 
\rightarrow \Z \oplus (\Z/3)^2 \oplus \Z/2 \rightarrow 1$$
tells us that $ \Homol_2(\Gamma; \thinspace \Z)$  is one of the group extensions \scriptsize 
$ \begin{cases} \Z^2 \oplus (\Z/3)^2 \oplus (\Z/2)^3, \\ \Z^2 \oplus (\Z/3)^2 \oplus (\Z/2)^2, \\
 \Z^2 \oplus \Z/3 \oplus (\Z/2)^3, \\ \Z^2 \oplus (\Z/3)^2 \oplus \Z/2 \oplus \Z/4, \\
 \Z^2 \oplus \Z/3 \oplus \Z/2 \oplus \Z/4. \end{cases}$ \normalsize
\\
Using the Universal Coefficient Theorem in the form
$$ 
\Homol_q(\Gamma; \thinspace  \Z/n) \cong 
\Homol_q(\Gamma; \thinspace \Z) \otimes (\Z/n) \oplus \mathrm{Tor}^\Z_1(\Homol_{q-1}(\Gamma; \thinspace \Z), \Z/n)$$
with $n = 2$, $3$ and $4$, we can now eliminate all the wrong answers and retain
$$ \Homol_q(\text{PSL}_2(\mathcal{O}_{-13}); \thinspace \Z) \cong
\begin{cases}
\Z^3 \oplus (\Z/2)^2, &  q= 1, \\
\Z^2 \oplus \Z/4 \oplus (\Z/3)^2 \oplus \Z/2, & q = 2, \\
(\Z/2)^{q} \oplus (\Z/3)^2, & q = 4k+3, \quad k \ge 0, \\
(\Z/2)^q, & q = 4k+4,  \quad k \ge 0, \\
(\Z/2)^{q}, & q = 4k+1,  \quad k \ge 1, \\
(\Z/2)^{q} \oplus (\Z/3)^2, & q = 4k+2,  \quad k \ge 1.
\end{cases} $$

\subsection{$m = 5$}
We will make use of the following matrices, which agree with those in \cite{Floege}:
\scriptsize \begin{alignat*}8
&A &:=& \pm { \begin{pmatrix}  & -1 \\ 1 & \end{pmatrix} },
& \qquad
&B &:=& \pm { \begin{pmatrix} -\omega & 2 \\ 2 & \omega \end{pmatrix} },\qquad
&M &:=& \pm {\begin{pmatrix} -\omega & 4 \\ 1 & \omega \end{pmatrix} },
& \qquad
&S &:=& \pm {\begin{pmatrix} & -1 \\ 1 & 1 \end{pmatrix} },
\\
&U &:=& \pm {\begin{pmatrix} 1 & \omega \\ & 1 \end{pmatrix} }, & \qquad
&V &:=& \pm { \begin{pmatrix} -\omega & 2-\omega \\ 2 & 2+\omega \end{pmatrix} },\qquad
&W &:=& \pm { \begin{pmatrix} 6-\omega & 5+2\omega \\ 2\omega & \omega-4 \end{pmatrix} }.
\end{alignat*} \normalsize
These are subject to the relations \mbox{$UMU^{-1} = A$,} $UWS(UW)^{-1} = S$, $WABW^{-1} = MB$ and \mbox{$S = ABV$.}
A fundamental domain is displayed in figure \ref{m5}. There are five orbits of vertices, with stabilizers 

\begin{wrapfigure}{r}{25mm}
\includegraphics[width=22mm]{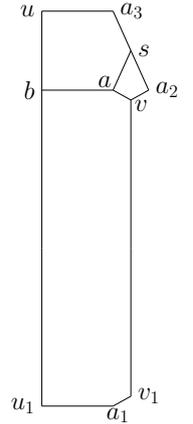}
\caption{The fundamental domain for $m=5$} \label{m5}
\end{wrapfigure}
\scriptsize
$$\begin{array}{*{5}{l}}
\Gamma_{b} &=&\langle A, B| A^2 = B^2 = 1 \rangle \cong {\mathcal{D}}_2,\\
\Gamma_{u} &=&\langle B, M | B^2 = M^2 = 1 \rangle \cong {\mathcal{D}}_2,\\
\Gamma_{a} &=&\langle AB | AB^2 = 1 \rangle \cong \Z/2, \\
\Gamma_{v} &=&\langle S |  S^3 = 1 \rangle \cong \Z/3, \\
\Gamma_{s} &=&\left\langle V,W  | VW = WV \right\rangle \cong \Z^2
\end{array}$$ \normalsize
As in the case $m=13$, vertices with the same letter in the fundamental domain are identified by the action of $\Gamma$. Amongst the identifications of the vertices, we will use the following.\label{identifications5}
 $UW \cdot a = a_1$, $V^{-1} \cdot a = a_2$, $S^2 \cdot a = a_2$, $U \cdot u = u_1$ and $UW \cdot v = v_1$.
There are seven orbits of edges, with stabilizers \scriptsize
$$\begin{array}{*{5}{l}}
\Gamma_{(b,a)} &= &  \langle AB | AB^2 = 1 \rangle \cong \Z/2, \\
\Gamma_{(v,v_1)}&=&  \langle S |  S^3=1 \rangle \cong \Z/3, \\
\Gamma_{(a_3,u)} &=&\langle MB |  MB^2 = 1 \rangle \cong \Z/2, \\
\Gamma_{(u,b)}  &=&\langle B |  B^2 = 1 \rangle \cong \Z/2, \\
\Gamma_{(u_1,b)} &=& \langle A | A^2 = 1\rangle \cong \Z/2;
\end{array}$$ \normalsize
$(a,v)$ and $(a,s)$ having the trivial stabilizer. There are three orbits of faces, with trivial stabilizers.
The above data gives the $\Gamma$-equivariant Euler characteristic of $X$: 
$$ \chi_\Gamma(X) =  \frac{1}{2} +\frac{1}{3} +\frac{2}{4} -2 -\frac{4}{2} -\frac{1}{3} +3 = 0, $$
in accordance with remark \ref{vanish}.

\subsubsection{The bottom row of the $E^1$-term}$\text{ }$\\
This row identifies with the cellular chain complex of the quotient complex $\Gamma\backslash X$. \\
We obtain for the row $q=0$ in the columns $p=0,1,2$:
$$
 {\mathbb{Z}}^5 
\xleftarrow{\ d^1_{1,0}\ } 
 {\mathbb{Z}}^7
\xleftarrow{\ d^1_{2,0}\ } 
{\mathbb{Z}}^3
$$
\mbox{where 1 is the only elementary divisor of the differential matrices, with multiplicity four for $d^1_{1,0}$,}
 and multiplicity two for $d^1_{2,0}$. 
The homology of $\Gamma\backslash X$ is generated in degree 1 by the loop represented by the edge $(v,v_1)$, and in degree 2 by the quotient of the face $(a_2,s,a,v)$, which is homeomorphic to a 2-sphere.

\subsubsection{The odd rows of the $E^1$-term}$\text{ }$\\
We start by investigating the morphism
$$
 \Z^2 \oplus \Z/3 \oplus (\Z/2)^5
\xleftarrow{\ \ d^1_{1,1}\ \ }
\Z/3 \oplus (\Z/2)^4
$$
and the morphism
$$
 \Z/3 \oplus (\Z/2)^{q+4}
\xleftarrow{\ \ d^1_{1,q}\ \ }
 \Z/3 \oplus (\Z/2)^4
$$
for $q \ge 3$.
On the 3-torsion, $d^1_{1,q}$ is zero. \\
\begin{wrapfigure}{r}{60mm}
\caption{ $(d^1_{1,q})_{(2)}$ }
\label{d11q}
 \scriptsize
$\begin{array}{l|cccc}
& (b,a) & (a_3,u)& (u,b) & (u_1,b)
\\
\hline
a  & 1 & -1 & 0  & 0
\\
b &  -1 & 0  & 0  & 1
\\
\vdots   & \vdots &\vdots &\vdots & \vdots
\\
b &  -1 & 0  & 1  & 0
\\
u &   0 & 1  & -1 & 0
\\
\vdots & \vdots     & \vdots & \vdots& \vdots
\\
u &   0 & 1  & 0  & -1,
\end{array}
$ \normalsize
\end{wrapfigure}
On the 2-torsion, $d^1_{1,q}$ is given by the matrix $(d^1_{1,q})_{(2)}$ of figure \ref{d11q},
where we replace the $\frac{q-1}{2}$ dotted entries between the two $1$'s with $1$'s, and the $\frac{q-1}{2}$ dotted entries between the $-1$'s with $-1$'s. The rest is filled with zeroes. Thus $(d^1_{1,1})_{(2)}$ has rank 3 and $(d^1_{1,q})_{(2)}$ has rank 4 for $q \ge 3$.

\subsubsection{The even rows of the $E^1$-term}$\text{ }$\\
There is a zero map arriving at \mbox{$E^1_{0,2} \cong \Z \oplus (\Z/2)^{2}$.} \\
For $q \ge 4$, there is a zero map arriving at $E^1_{0,q} \cong (\Z/2)^{q}$. \\
The rest of the $E^1$-page are zeroes.

\subsubsection{The $E^2$-term}$\text{ }$\\
In the rows with $q \ge 2$, the $E^2$-page is concentrated in the columns \\
$p=0$ and $p=1$:
\scriptsize
$$
\begin{array}{{l|cccc}}
q \ge 4 \text{ even } & (\Z/2)^q  & & 0\\
q \ge 3 \text{ odd } &  (\Z/2)^q \oplus \Z/3 & & \Z/3\\
q = 2 & \Z\oplus (\Z/2)^2 & & 0
\end{array}
$$\normalsize
Its lowest two rows are concentrated in the columns $p=0,1,2$:
$$\scriptsize
\xymatrix{
q=1&\Z^2\oplus (\Z/2)^2 \oplus\Z/3& \Z/2 \oplus \Z/3 & 0\\
q=0&\Z&\Z&\Z  \ar[ull]_{d^2} 
}
$$
\normalsize
\\
Let us compute the only nontrivial $d^2$-arrow.
The generator of $E^2_{2,0}$ comes from the 2-cell $(a_2,s,a,v)$. Using the identifications listed in \ref{identifications5}, we see that the lift $1 \otimes_{(a_2,s,a,v)}  1 $ of the generator of $E^2_{2,0}$  
is mapped as follows in the $E^0$-page:
\scriptsize
$$\xymatrix{
{
\begin{array}{cc}
-(1,V^{-1}) \otimes_s  1
 +(1,V^{-1}) \otimes_a   1\\
+(1, S^2) \otimes_v   1 
 -(1, S^2) \otimes_a   1
\end{array}
}
& &
\ar[ll]_{1 \otimes \delta} 
{
\begin{array}{cc}
-(1,V^{-1}) \otimes_{(a,s)}  1\\
 +(1, S^2) \otimes_{(a,v)}  1
\end{array}
}
\ar[d]^{d_\Theta \otimes 1} 
&
\\
& &
{
\begin{array}{cc}
1 \otimes_{(a,s)}   1 
-V^{-1} \otimes_{(a,s)}  1\\
 +S^2 \otimes_{(a,v)}   1
-1 \otimes_{(a,v)}   1 
\end{array}
}
& &
1 \otimes_{(a_2,s,a,v)}  1  
\ar[ll]_{1 \otimes \delta} 
}$$
\normalsize
As $S = ABV$, the part lying in $\Theta_1 \otimes_a \Z$ is 
$[V^{-1}] \otimes_a 1 -[S^2] \otimes_a 1 = [S^2 AB] \otimes_a 1 -[S^2] \otimes_a 1$; and goes to
$\overline{S^{-1}S^2 AB} -\overline{S^{-1}S^2} = \overline{AB}$, the generator of $\Homol_1(\Gamma_a; \thinspace  \Z)$.
So, our image in $E^0_{0,1}$ passes to \small
$$ (\overline{V}, 2\overline{S}, \overline{AB})  
\in \langle \overline{V},\overline{W}  \rangle
\oplus \langle \overline{S} \thinspace | \thinspace 3\overline{S} = 0 \rangle
\oplus ({\mathbb{Z}/2})^2 \cong  E^2_{0,1},$$ \normalsize
which is of infinite order and has the following property:
There is no element $\eta \in E^2_{0,1}$ with \\
\mbox{$k\eta = (\overline{V}, 2\overline{S}, \overline{AB})$} for an integer $k>1$. 
So, $$ E^3_{0,1} \cong {\mathbb{Z}} \oplus ({\mathbb{Z}/2})^2 \oplus {\mathbb{Z}/3}. $$
Thus the $E^\infty$-page yields the following short exact sequences:
$$ \scriptsize
\begin{cases}
0 \rightarrow (\Z/2)^q \rightarrow \Homol_q(\Gamma; \thinspace \Z) \rightarrow \Z/3 \rightarrow 0
\quad & \thinspace q \ge 4 \thinspace \mathrm{even},\\
0 \rightarrow \Z/3 \oplus (\Z/2)^q \rightarrow \Homol_q(\Gamma; \thinspace \Z) \rightarrow  0
\quad & \thinspace q \ge 3 \thinspace \thinspace \mathrm{odd},\\
 0 \rightarrow \Z \oplus (\Z/2)^2 \rightarrow \Homol_2(\Gamma; \thinspace \Z) 
\rightarrow \Z/3 \oplus \Z/2 \rightarrow 0,\\
0 \rightarrow {\mathbb{Z}} \oplus {\mathbb{Z} /3} \oplus ({\mathbb{Z} /2)^2} 
\rightarrow \Homol_1(\Gamma; \thinspace \Z) \rightarrow \Z \rightarrow 0.
\end{cases} \normalsize
$$
To resolve the ambiguity at the group extension $ \Homol_2(\Gamma; \thinspace  \Z)$, we compute
\begin{center}$ 
\dim_{\F_2}\Homol_q(\Gamma; \thinspace \Z/2) = $\scriptsize$
\begin{cases}
 4 \qquad  & q = 1,\\
 5 \qquad & q=2,\\ 
 {2q-1} \qquad&q\ge 3 
\end{cases}
$ 
\qquad
$ \Homol_q(\Gamma; \thinspace  \Z/3) \cong (\Z/3)^2$, $q > 2$,
\hfill
$\begin{cases} 1 \rightarrow (\Z/2)^5 \rightarrow \Homol_3(\Gamma ; \thinspace  \Z/4) \rightarrow \Z/2 \rightarrow 1,\\
\\
 1 \rightarrow (\Z/2)^4 \oplus \Z/4 \rightarrow \Homol_2(\Gamma ; \thinspace  \Z/4) \rightarrow \Z/2 \rightarrow 1; 
\end{cases}$
\end{center}
\normalsize where the last two sequences are exact; and get the result
$$
\Homol_q(\text{PSL}_2(\mathcal{O}_{-5}); \thinspace \Z) \cong
\begin{cases}
 \Z^2 \oplus \Z/3 \oplus (\Z/2)^2 \qquad  & q = 1,\\
 \Z \oplus \Z/4 \oplus \Z/3 \oplus \Z/2 \qquad & q=2,\\ 
 \Z/3 \oplus (\Z/2)^q \qquad & q\ge 3. 
\end{cases}
$$
\begin{rem}
For $m=5$, the check introduced in remark \ref{check} takes the following form.\\
The abelianization is $\Gamma^{\text{ab}}\cong \langle\overline{A},\overline{B},\overline{S},\overline{U},\overline{V}: 2\overline{A}=0,2\overline{B}=0,3\overline{S}=0\rangle$. The fundamental group of the quotient space is free, so only the parabolic elements $U$ and $V$ can define nontrivial loops in the quotient space. The element $U$ generates a nontrivial loop, while $V$ generates a trivial loop.
It follows that $E_{0,1}^\infty \cong \Z\oplus(\Z/2)^2\oplus \Z/3$, generated by $\overline{V},\overline{A},\overline{B}$ and $\overline{S}$. This is consistent with the computation above, involving the detailed analysis of the $d^2$-differential.
\end{rem}

\subsection{$m=10$} Let $\omega := \sqrt{-10}$.
We will use the following definitions: \scriptsize \\
$$\begin{array}{llllllllllll}
A&:=& \pm \begin{pmatrix} & -1 \\ 1 & \end{pmatrix}, 
&B&:=& \pm \begin{pmatrix} -\omega & 3 \\ 3 & \omega \end{pmatrix},
&C&:=& \pm \begin{pmatrix} -1 -\omega & 4-\omega \\ 2 & 1+\omega \end{pmatrix},
&D&:=& \pm \begin{pmatrix} \omega -1 & -4 \\ 3 & 1+\omega \end{pmatrix},
\\ \\
L&:=& \pm \begin{pmatrix} \omega & 3 \\ 3 & -\omega \end{pmatrix}, 
&R&:=& \pm \begin{pmatrix}   5+\omega & 2\omega -3 \\ \omega-3 & -4-\omega \end{pmatrix},
&S&:=& \pm \begin{pmatrix}  & -1 \\ 1 & 1 \end{pmatrix}, 
&U&:=& \pm \begin{pmatrix}  1 & \omega \\ & 1 \end{pmatrix},
\\ \\
V&:=& \pm \begin{pmatrix}  1-\omega & 5 \\ 2 & 1+\omega \end{pmatrix},
&W&:=& \pm \begin{pmatrix} 11 & 5\omega \\ 2\omega & -9 \end{pmatrix},
&Y&:=& \pm \begin{pmatrix} \omega -2 & -5 \\ 3 & 2+\omega \end{pmatrix}.
\end{array}$$ \normalsize
\label{identifications10}
Vertices with the same letter are identified by the action. The matrix $U$ acts as a vertical translation by $-\omega$ on the fundamental domain, which is shown in figure \ref{m10}. There are nine orbits of vertices, labelled $a,b,r,u,v,w,x,y,s$. We have the following identifications:
$ UWa = a_1 $, \quad  $ Wa = a_2 $, \quad $Va = a_3 $; \quad 
$S^{-1}v =v_1 $, \quad $ U^{-1}Dv = v_2 $; \quad 
$Dw = w_1 $,\quad  $ U^{-1}Dw = w_2 $; \quad 
$ Db = b_1 $, \quad $ Cb = b_2 $; \quad  $ Dr = r_1$; \quad $ UWx = x_1 $.
On the vertices of $(a,s,a_3,x)$ , we have the identifications $B \cdot a = a_3$ and $V \cdot a = a_3$,
where the matrix $B$ fixes $x$ and the matrix $V$ fixes $s$.
For $(v_1,b_2,r,b,v,w)$, we have the identifications of vertices $ Cb = b_2 $, $Cr = r $,  $S^{2}v =v_1 $ and $S^2w = w $; and we pay particular attention to the matrix $ CR = S^2 AB $ identifying the edge 
$(b,v) \cong (b_2,v_1) $. 
\\
The stabilizers of the vertex orbit representatives are 

$$ \scriptsize
\begin{array}{llllll}
&\Gamma_a &=& \thinspace\Gamma_b &=&\left\langle \left.
R \right| \thinspace
R^3 = 1
\right\rangle \cong \Z/3,
\\
&\Gamma_w &=&&& \left\langle \left.
S \thinspace \right| \thinspace
S^3 = 1
\right\rangle \cong \Z/3,\\
 &\Gamma_y &=&&& \left\langle \left.
A, L 
\right| \thinspace
A^2 = L^2  = (AL)^2 = 1
\right\rangle \cong \mathcal{D}_2,\\
&\Gamma_{u}&=&&& \left\langle \left.
 A, B \thinspace
\right| \thinspace
A^2 = B^2 = (AB)^2 = 1
\right\rangle \cong \mathcal{D}_2,\\
 &\Gamma_r &=&&& \left\langle \left. C
\right| \thinspace
C^2 = 1
\right\rangle \cong \Z/2,
\\  &\Gamma_v &=&&& \left\langle \left. AB
\right| \thinspace
(AB)^2 = 1
\right\rangle \cong \Z/2,
\\
 &\Gamma_{x} &=&&& \left\langle \left.
B \thinspace \right| \thinspace
B^2 = 1
\right\rangle \cong \Z/2,\\
 &\Gamma_s &=&&& \left\langle \left.
V, W
\right| \thinspace
VW = WV
\right\rangle \cong \Z^2.
\end{array}
$$ \normalsize
There are fifteen orbits of edges, labelled $(b,v), (r,w), (b,r), (v,w), (a_2,w_2),\linebreak (y,r_1),(x,a),(u,y),(a,b),(u,v),(a,s),(w,b_1),(r,v_2),(y,x_1),(x,u)$.
\begin{wrapfigure}{r}{25mm}
\includegraphics[width=22mm]{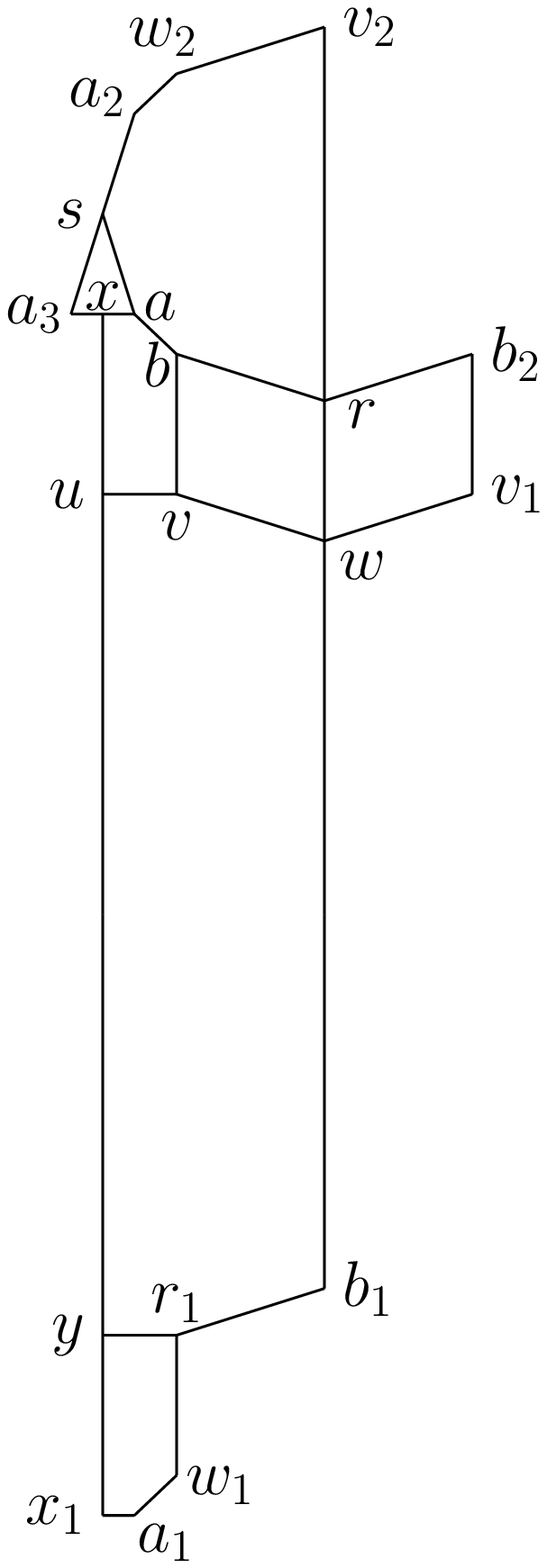} 
\caption{The fundamental domain for $m=10$} \label{m10}
\end{wrapfigure}
Amongst their stabilizers only
\\
$ \scriptsize
\begin{array}{llllllll}
&\Gamma_{(a_2,w_2)}&=&\Gamma_{a_2} &=&&& W^{-1}\Gamma_{a}W = \left\langle \left.
W^{-1}RW \thinspace \right| \thinspace
(W^{-1}RW )^3 = 1
\right\rangle \cong \Z/3,\\
&\Gamma_{(a,b)} &=& \Gamma_a &=& \Gamma_b &=&\left\langle \left.
R  
\right| \thinspace
R^3 = 1
\right\rangle \cong \Z/3,\\
&\Gamma_{(w,b_1)} &=&\Gamma_{b_1} &=&  \Gamma_w &=& \left\langle \left.
S \thinspace \right| \thinspace
S^3 = 1
\right\rangle \cong \Z/3,\\
&\Gamma_{(y,r_1)}&=&\Gamma_{r_1} &=&&& D\Gamma_{r}D^{-1} =\left\langle \left.
 AL=DCD^{-1} \thinspace \right| \thinspace
(DCD^{-1})^2 = 1
\right\rangle \cong \Z/2,\\
&\Gamma_{(u,v)}&=& \Gamma_{v} &=&&& \left\langle \left.
AB \thinspace \right| \thinspace
(AB)^2 = 1
\right\rangle \cong \Z/2,\\
&\Gamma_{(r,v_2)} &=&\Gamma_{v_2} &=& \Gamma_r &=& \left\langle \left. C
\right| \thinspace
C^2 = 1
\right\rangle \cong \Z/2,\\
&\Gamma_{(y,x_1)}&=&\Gamma_{x_1} &=&&& UW\Gamma_{x}(UW)^{-1} =\left\langle \left.
 L \thinspace \right| \thinspace
L^2 = 1
\right\rangle \cong \Z/2,\\
&\Gamma_{(x,u)} &=&\Gamma_{x} &=&&& \left\langle \left.
B \thinspace \right| \thinspace
B^2 = 1
\right\rangle \cong \Z/2,\\
&\Gamma_{(u,y)} &=&&&&&  \left\langle \left.
A \thinspace \right| \thinspace
A^2 = 1
\right\rangle \cong \Z/2
\end{array}
$ \normalsize \\
are nontrivial. Furthermore, there are seven orbits of faces, with trivial stabilizers. 
\\
With the above information on the isomorphism types of the cell stabilizers, we get the $\Gamma$-equivariant Euler characteristic of $X$: 
$$ \chi_\Gamma(X) = 
\frac{3}{3} +\frac{2}{4} +\frac{3}{2} 
-\frac{3}{3} -\frac{6}{2} -6
 +7 
= 0, $$
in accordance with remark \ref{vanish}. 
\subsubsection{The bottom row $q=0$ of the $E^1$-term} 
We obtain for the row $q = 0$ in the columns $p = 0, 1, 2$:
$$
 \Z^9 \xleftarrow{\ d^1_{1,0}\ } \Z^{15} 
\xleftarrow{\ d^1_{2,0}\ } \Z^7,
$$
where 1 is the only elementary divisor of the differential matrices, with multiplicity eight for $d^1_{1,0}$, and multiplicity five for $d^1_{2,0}$. The rest of this row is zero.
\subsubsection{The odd rows of the $E^1$-term}$\text{ }$\\
For odd $q$, the morphism
$$
\bigoplus_{\sigma \in \Gamma\backslash X^0} \Homol_q (\Gamma_\sigma)
\xleftarrow{\ d^1_{1,q}\ }
\bigoplus_{\sigma \in \Gamma\backslash X^1} \Homol_q(\Gamma_\sigma)
$$
is for $q \ge 3 $ of the form
$$
(\Z/3)^3 \oplus (\Z/2)^{q+6}
\xleftarrow{\ \ }
(\Z/3)^3 \oplus (\Z/2)^6.
$$
For $q = 1$, we have to add $\Homol_1(\Gamma_{s}) \cong {\mathbb{Z}^2}$ on the 
target side of the \mbox{morphism $d^1_{1,q}$,} but the incoming torsion must reach it trivially.
On the 3-primary part, $d^1_{1,q}$ is given by the matrix
$$ \scriptsize
(d^1_{1,q})_{(3)}=\begin{array}{c|ccc}
& (a,b) & (Db,w) & (Wa,U^{-1}Dw)
\\
\hline
a & -1 &  0 & -1
\\
w &  0 &  1 & 1
\\
b &  1 & -1 & 0.
\end{array} \normalsize
$$
This matrix has rank 2, so its image is isomorphic to
 $({\mathbb{Z} /3})^2$ 
and its kernel is ${\mathbb{Z} /3}$.
\\ On the 2-primary part, $d^1_{1,q}$ is for odd $q$ given by the matrix 
$$ (d^1_{1,q})_{(2)} = \quad \scriptsize
\begin{array}{c|cccccc}
& (y,r_1) & (u,v) & (r,v_2) & (y,x_1) & (x,u) & (u,y)
\\
\hline
u & 0 & -1 & 0 & 0 & 0 & -1
\\
\vdots  & \vdots & \vdots & \vdots & \vdots & \vdots & \vdots
\\
u & 0 & -1 & 0 & 0 & 1 & 0
\\
y & -1 & 0 & 0 & 0 & 0 & 1
\\
\vdots  & \vdots & \vdots & \vdots & \vdots & \vdots & \vdots
\\
y & -1 & 0 & 0 & -1 & 0 & 0
\\
x & 0 &  0 & 0 & 1 & -1 & 0
\\
r & 1 & 0 & -1 & 0 & 0 & 0
\\
v & 0 & 1 & 1 & 0 & 0 & 0,
\end{array}
$$ \normalsize
where we replace, as in the computation for $m = 13$, the $\frac{q-1}{2}$ dotted entries between the two $1$'s with $1$'s, and the $\frac{q-1}{2}$ dotted entries between the $-1$'s with $-1$'s. The rest is filled with zeroes. The resulting matrix $(d^1_{1,q})_{(2)}$ has rank 5 for $q=1$, and full rank 6 for $q \ge 3$.

\subsubsection{The even rows of the $E^1$-term}$\text{ }$\\
These rows are given by zero maps into $\bigoplus\limits_{\sigma \in \Gamma\backslash X^0} \Homol_q (\Gamma_\sigma) \cong ({\mathbb{Z} /2})^q $  for $q>2$, respectively into \linebreak $\bigoplus\limits_{\sigma \in \Gamma\backslash X^0} \Homol_2 (\Gamma_\sigma) \cong {\mathbb{Z}} \oplus (\Z/2)^2$ for $q=2$.

\subsubsection{The $E^2$-term}$\text{ }$\\
In the rows with $q \ge 2$, the $E^2$-page is concentrated in the columns $p=0$ and $p=1$:
\scriptsize
$$
\begin{array}{{l|cccc}}
q \ge 4 \text{ even } & (\Z/2)^q  & & 0\\
q \ge 3 \text{ odd } &  (\Z/2)^q \oplus \Z/3 & & \Z/3\\
q=2 & \Z\oplus (\Z/2)^2 & & 0
\end{array}
$$\normalsize
Its lowest two rows are concentrated in the columns $p=0,1,2$:
$$\scriptsize
\xymatrix{
q=1&\Z^2\oplus (\Z/2)^2 \oplus\Z/3& \Z/2 \oplus \Z/3 & 0\\
q=0&\Z&\Z^2 &\Z^2  \ar[ull]_{d^2} 
}
$$
\normalsize

\subsubsection{The differential $d^2$}$\text{ }$\\
The generators of the abelian group $E^2_{2,0} \cong {\mathbb{Z}}^2 $
are represented by the \mbox{2-cell} $(a,s,a_3,x)$ and the
union of two 2-cells $(v_1,b_2,r,b,v,w)$, whose quotients by $\Gamma$
are homeomorphic to 2-spheres. Using the identifications given in \ref{identifications10}, wee that the only nontrivial $d^2$-arrow is induced by
$$ \delta((a,s,a_3,x)) = (a,s) +V \cdot (s,a) +B \cdot (a, x) +(x,a) $$
and
$$ \delta((v_1,b_2,r,b,v,w))= (b,r)- C \cdot (b,r)+CR \cdot (b, v)+ S^2 \cdot (v, w)-(v,w) -(b,v). $$
The lift $1 \otimes_{(v_1,b_2,r,b,v,w)} 1$ of the generator obtained from $(v_1,b_2,r,b,v,w)$ is mapped as follows:\\
\scriptsize
$$\xymatrix@C=2em{
{
\begin{array}{cc}
(C,1) \otimes_{r} 1 
-(C,1) \otimes_{b} 1 \\
+(1,CR) \otimes_{v} 1  
-(1,CR) \otimes_{b} 1 \\
+(1,S^2) \otimes_{w} 1
-(1,S^2) \otimes_{v} 1
\end{array}
}
& &
\ar[ll]_{1 \otimes \delta } 
{
\begin{array}{cc}
 (C,1) \otimes_{(b,r)} 1 \\
 +(1,CR) \otimes_{(b,v)} 1  \\
 +(1,S^2) \otimes_{(v,w)} 1 
\end{array}
}
\ar[d]^{d_\Theta \otimes 1} 
&
\\
& &
{
\begin{array}{cc}
 1 \otimes_{(b,r)} 1
-C \otimes_{(b,r)} 1 \\
 +CR  \otimes_{( b,v)} 1
-1 \otimes_{(b,v)} 1  \\
 +S^2 \otimes_{(v,w)} 1
-1 \otimes_{(v,w)} 1  
\end{array}
}
& &
1 \otimes_{(v_1,b_2,r,b,v,w)} 1  
\ar[ll]_{1 \otimes \delta } 
}$$
\normalsize
We 
obtain $d^2_{2,0}(\langle (v_1,b_2,r,b,v,w) \rangle) \cong \Z/3$.
\\
 
 The lift $1 \otimes_{(a,s,a_3,x)} 1 $ of the generator obtained from $(a,s,a_3,x)$ is mapped\\
\scriptsize
$$\xymatrix{
{
\begin{array}{cc}
(V,1) \otimes_s  1 
 -(V,1) \otimes_a 1 \\  
+(1, B) \otimes_x  1 
 -(1, B) \otimes_a  1
\end{array}
}
& &
\ar[ll]_{1 \otimes \delta } 
{
\begin{array}{cc}
(V,1) \otimes_{(a,s)} 1 \\
+(1,B) \otimes_{(a,x)} 1
\end{array}
}
\ar[d]^{d_\Theta \otimes 1} 
&
\\
& &
{
\begin{array}{cc}
1 \otimes_{(a,s)}  1
-V \otimes_{(a,s)}  1 \\
+B  \otimes_{( a,x)} 1
-1 \otimes_{(a,x)}  1 
\end{array}
}
& &
1 \otimes_{(a,s,a_3,x)} 1  
\ar[ll]_{1 \otimes \delta } 
}
$$
\normalsize
We attribute the symbols $t_\sigma$ to the part of this sum lying in 
$\Theta_1 \otimes_\sigma \Z $,
$$ \scriptsize
\begin{array}{llll}
&t_s &:=& (V,1) \otimes_s  1,\\
&t_x &:=& (1, B) \otimes_x  1,\\
&t_a &:=& -(V,1) \otimes_a  1 -(1, B) \otimes_a 1.
\end{array} \normalsize $$
We find the class 
 $  \overline{t_s} =  -\overline{V} \in \langle \overline{V},\overline{W} \rangle
= \Gamma_s^{\text{ab}} \cong \Homol_1(\Gamma_s; \thinspace  \Z )  \cong {\mathbb{Z}}^2, $
which is a generator of the free part of $E^1_{0,1}$.
It can not be the image of a torsion element from $E^1_{1,1} = (\Z/3)^3 \oplus (\Z/2)^2$.
Therefore, it is preserved when passing from $ E^1_{0,1}$ to $ E^2_{0,1}$. 
The cycles $\overline{t_x}$ and $\overline{t_a}$ are torsion, 
so the fact that $\overline{t_s}$ is a generator of the free part determines that the image $d^2_{2,0}(\langle (a,s,a_3,x)\rangle)$  is of infinite order and has the following property:
There is no element $\eta \in E^2_{0,1} \cong \Z^2 \oplus \Z/3 \oplus (\Z/2)^2$ with $k\eta = d^2_{2,0}(\langle (a,s,a_3,x)\rangle)$  for an integer $k>1$. 
Together with the isomorphism $d^2_{2,0}(\langle (v_1,b_2,r,b,v,w) \rangle) \cong \Z/3$, we obtain
$$ E^3_{0,1} \cong \Z \oplus (\Z/2)^2. $$
Thus the $E^\infty$-page gives the following short exact sequences:
$$\scriptsize
\begin{cases}
 0 \rightarrow (\Z/2)^q \rightarrow \Homol_q(\Gamma; \thinspace \Z) \rightarrow \Z/3 \rightarrow 0
,\quad &\mathrm{for } \thinspace q \ge 4 \thinspace \mathrm{even},\\
0 \rightarrow \Z/3 \oplus (\Z/2)^{q} \rightarrow \Homol_q(\Gamma; \thinspace \Z) \rightarrow  0
,\quad &\mathrm{for } \thinspace q \ge 3 \thinspace \thinspace \mathrm{odd},\\
0 \rightarrow \Z \oplus (\Z/2)^2 \rightarrow \Homol_2(\Gamma; \thinspace \Z) \rightarrow \Z \oplus \Z/3
\oplus \Z/2 \rightarrow 0,\\
0 \rightarrow {\mathbb{Z}} \oplus ({\mathbb{Z} /2)^2} 
\rightarrow \Homol_1(\Gamma; \thinspace \Z) \rightarrow \Z^2 \rightarrow 0.
\end{cases}
$$ \normalsize

Therefore, there is ambiguity in the 3-torsion and the 2-torsion of the short exact sequence for $\Homol_2(\Gamma; \thinspace \Z)$.
To identify the correct group extension, we compute
$$
\dim_{\F_2}\Homol_q(\Gamma; \thinspace  \Z/2) \cong \begin{cases}
{2q-1}, & q \ge 3 \\
6, & q = 2, \\
5, & q = 1.
\end{cases}
$$  
Furthermore, we find $\Homol_q(\Gamma; \thinspace  \Z/3) \cong 
(\Z/3)^2
$ for all $q \geq 3$ and the exact sequence \\
$1 \rightarrow (\Z/2)^5 \rightarrow \Homol_3(\Gamma; \thinspace  \Z/4) \rightarrow \Z/2 \rightarrow 1.$

From here, we easily see the results, 
$$
\Homol_q(\text{PSL}_2(\mathcal{O}_{-10}); \thinspace \Z) \cong
\begin{cases}
\Z^3  \oplus (\Z/2)^2, & q=1,\\
\Z^2 \oplus \Z/4 \oplus \Z/3 \oplus \Z/2, & q=2,\\
\Z/3 \oplus (\Z/2)^{q}, & q \ge 3;
\end{cases}
$$

\begin{rem}
For $m=10$, the check introduced in remark \ref{check} takes the following form. The abelianization is the group
$\Gamma^{\text{ab}}\cong \langle\overline{A},\overline{B},\overline{D},\overline{U},\overline{W}:2\overline{A}=2\overline{B}=0\rangle.$
The elements of infinite order are $D$, $U$ and $W$. The elements $U$ and $U^{-1}D$ give the cycles generating $\Homol_1(\Gamma \backslash X)$, while $W$ generates a trivial loop.
So it follows that $E_{0,1}^\infty=\Z\oplus (\Z/2)^2$, generated by $\overline{W},\overline{A}$ and $\overline{B}$. This is consistent with the computation above.
\end{rem}

\subsection{$m=6$}
We obtain the fundamental domain  for $\Gamma = \text{PSL}_2(\Z[\sqrt{-6}\thinspace])$ displayed in figure \ref{m6}. The matrix $U := \pm \left( \begin{smallmatrix}1 & \omega \\ & 1
\end{smallmatrix}
\right)$ performs a vertical translation by $-\omega$ of the fundamental domain. The following matrices occur in the cell stabilizers.
\scriptsize $$\begin{array}{lllllllllllllllllllllllllllllll}
\\
A &:=& \pm { \begin{pmatrix}  & -1 \\ 1 & \end{pmatrix} },
&B &:=& \pm { \begin{pmatrix} -1-\omega & 2-\omega \\ 2 & 1+\omega \end{pmatrix} },
&R &:=& \pm {\begin{pmatrix} -\omega & 5-\omega \\ 1 & 1+\omega \end{pmatrix} },
\\ \\
S &:=& \pm {\begin{pmatrix} & -1 \\ 1 & 1 \end{pmatrix} },
&V &:=& \pm { \begin{pmatrix} 1-\omega & 3 \\ 3 & 1+\omega \end{pmatrix} },
&W &:=& \pm { \begin{pmatrix} 7 & 3\omega \\ 2\omega & -5 \end{pmatrix} }. \\
\end{array}$$ \normalsize 
There are five orbits of vertices, labelled $b,a,u,v,s$, with stabilizers\\ 
\begin{wrapfigure}{r}{25mm}
\includegraphics[width=22mm]{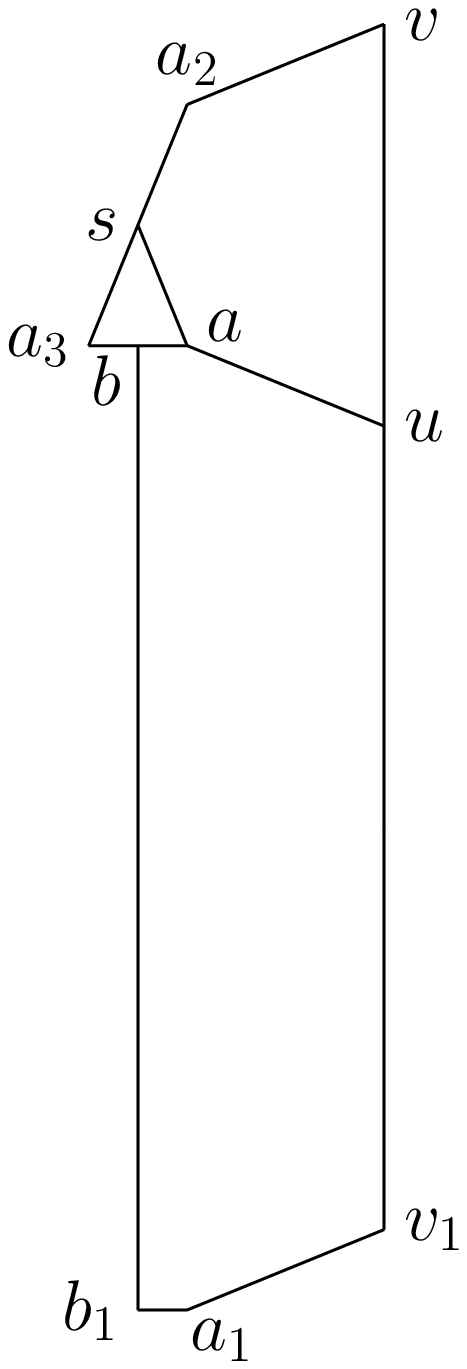}
\caption{The fundamental domain for $m=6$} \label{m6}
\end{wrapfigure}
$$ \scriptsize
\begin{array}{llll}
&\Gamma_u &=& \left\langle \left.
B,S \right| \thinspace
B^2 = S^3 = (BS)^3 = 1
\right\rangle \cong \mathcal{A}_4,\\
&\Gamma_v &=& \left\langle \left.
B, \quad
R
\right| \thinspace
B^2 = R^3 = (BR)^3 = 1
\right\rangle \cong \mathcal{A}_4,\\
&\Gamma_a &=& \left\langle \left.
SB \thinspace \right| \thinspace
(SB)^3 = 1
\right\rangle \cong \Z/3,\\
&\Gamma_b &=& \left\langle \left.
A
\right| \thinspace
A^2 = 1
\right\rangle \cong \Z/2,\\
&\Gamma_s &=& \left\langle \left.
V,W\right| \thinspace
VW = WV
\right\rangle \cong \Z^2,
\end{array}
$$ \normalsize
\label{identifications6}
and identifications $UW \cdot a = a_1$, $W \cdot a = a_2$, $V \cdot a = a_3$, $A \cdot a = a_3$, $UW \cdot b = b_1$ and \mbox{$U \cdot v = v_1$.}
There are seven orbits of edges, labelled $(b,a)$, $(a,s)$, $(a,u)$, $(u,v)$, $(a_2,v)$, $(b,b_1)$ and $(u,v_1)$, amongst whose stabilizers  only
$$ \scriptsize
\begin{array}{llll}
&\Gamma_{(a_2,v)}&=& \left\langle \left.
RB \thinspace \right| \thinspace
(RB)^3 = 1
\right\rangle = \Gamma_{a_2} \cong \Z/3,\\
&\Gamma_{(u,v_1)}&=& \left\langle \left.
S \thinspace \right| \thinspace
S^3 = 1
\right\rangle \cong \Z/3,\\
&\Gamma_{(a,u)} &=& \left\langle \left.
SB \thinspace \right| \thinspace
(SB)^3 = 1
\right\rangle = \Gamma_a \cong \Z/3,\\
&\Gamma_{(u,v)}&=& \left\langle \left.
B \thinspace \right| \thinspace
B^2 = 1
\right\rangle \cong \Z/2,\\
&\Gamma_{(b,b_1)}&=& \left\langle \left.
A \thinspace \right| \thinspace
A^2 = 1
\right\rangle = \Gamma_b = \Gamma_{b_1} \cong \Z/2
\end{array}
$$ \normalsize
are nontrivial; and three orbits of faces with trivial stabilizers.
The above data gives the $\Gamma$-equivariant Euler characteristic of $X$: 
$$ \chi_\Gamma(X) = \frac{2}{12} +\frac{1}{3} +\frac{1}{2} -2 -\frac{3}{3}
-\frac{2}{2} +3 = 0, $$
in accordance with remark \ref{vanish}. 

\subsubsection{The bottom row of the $E^1$-term}$\text{ }$\\
We obtain in the columns $p=0,1,2$:
$$
 {\mathbb{Z}}^5
\xleftarrow{\ d^1_{1,0}\ }
 {\mathbb{Z}}^7
\xleftarrow{\ d^1_{2,0}\ } 
{\mathbb{Z}}^3
$$
where 1 is the only occurring elementary divisor of the differential matrices, with multiplicity four for $d^1_{1,0}$, and multiplicity two for $d^1_{2,0}$. 
The homology of this sequence is generated by the cycle $(b,b_1)$ in degree one and by the face $(a,s,a_3,b)$ in degree two. 

\subsubsection{The odd rows of the $E^1$-term}$\text{ }$\\
The map 
$d^1_{1,q}$ is on the 2-primary part induced by the inclusion of  
\mbox{$\Gamma_{(u,v)} \cong \Z/2$}
into $\Gamma_v$ and $\Gamma_u$ which are isomorphic to $\mathcal{A}_4$.
By \cite{SchwermerVogtmann}*{lemma 4.5(2)}, every inclusion of $\Z/2$ into $\mathcal{A}_4$ induces injections on homology in degrees greater than 1, and is zero on $\Homol_1$.
So the morphism
$$
\Z^2 \oplus \Z/2 \oplus (\Z/3)^3 
\xleftarrow{\ d^1_{1,1} \ }
 (\Z/2)^2\oplus(\Z/3)^3 
$$
has $\Z/2$-rank 0 on the 2-primary part, and 
$$
\Z/3 \oplus \Z/2 \oplus (\Homol_q(\mathcal{A}_4))^2
\xleftarrow{\ d^1_{1,q}\ }
 (\Z/2)^2\oplus(\Z/3)^3 
$$
in the odd rows of degree $q\ge 3$ has $\Z/2$-rank 1 on the 2-primary part.
 \\
\paragraph{On the 3-primary part, $d^1_{1,q}$ is for all odd $q$ given by the following rank 2 matrix}
$$ 
(d^1_{1,q})_{(3)}= \quad \scriptsize
\begin{array}{l|ccc}
 & (a,u) & (a_2,v) & (u,v_1)\\
\hline
a  & -1 & -1 & 0
\\
u &  1 & 0  & -1
\\
v &  0 & 1  & 1.
\end{array} \normalsize
$$
 In order to determine its rank, we make use of the following facts.\\
 First, by \cite{SchwermerVogtmann}*{lemma 4.5}, each of the occurring group inclusions induces an injection in homology. So we have to determine the relative positions of the images coming from the edges in each direct summand over the points. In order to find out if cancellation occurs between terms with positive and negative signs, let us look at the following diagram. The symbol $\Delta W$ denotes the isomorphism given by conjugation with $W$, $\delta$ denotes an inner automorphism, $\iota$ denotes any canonical inclusion, and the arrows emanating from $\Z/3$ are labeled with the image of the canonical generator.
$$
 \xymatrix@C=3em{
&
&
{
\Gamma_{(a_2,v)}
}
\ar[lld]_{\mathrm{id}} \ar[rrd]^{\iota}
\\
 {
 \Gamma_{a_2}
 }
 \ar[d]_{\Delta W}
 &&
 \Z/3\ar[u]_{RB}\ar[ll]^{RB}\ar[rr]_{RB}\ar[dll]^{SB}\ar[drr]_>>>>>>>{SU\!BU^{-1}}\ar[ddll]^{SB}\ar[ddr]_{S}\ar[ddl]^{SB}\ar[ddrr]_{S}\ar[dd]^{S}
&&{
 \Gamma_v
 }
\ar[d]^{\Delta U}
\\
 {
 \Gamma_a
 }
&&
&&
{
\Gamma_{v_1}
 }
\\
 {
 \Gamma_{(a,u)}
 }
\ar[u]^{\mathrm{id}} 
\ar[r]_{\iota}
&
{
\Gamma_{u}
}
\ar[r]_\delta
 &
{
\Gamma_{u}
}
&
{
\Gamma_{(u,v_1)}
}
\ar[r]_{\iota}\ar[l]^{\iota}
&
\ar[u]_\delta
{\Gamma_{v_1}
}
}
$$
\normalsize
Applying homology $\Homol_q$ for odd $q$ and taking into account that the fact that inner automorphisms act trivially on homology, 
we get a similar slightly smaller commutative diagram. 
One can then unambiguously identify all occurring groups $\Homol_q(\Z/3)\cong\Z/3$ 
and its images in $\Homol_q(\mathcal{A}_4)$ with the ``abstract'' $\Homol_q(\Z/3)\cong \Z/3$ in the middle. 
This gives a basis for the 3-primary parts of the source and a subspace of the image. 
In this basis, the 3-primary map is given by the above matrix $(d^1_{1,q})_{(3)}$, followed by an injection which does not influence the homology.

\subsubsection{The even rows of the $E^1$-term}$\text{ }$\\
The even rows are the zero map
to  $E^1_{0,2} \cong \Z \oplus (\Z/2)^2,$ and to 
\mbox{$ E^1_{0,q}  \cong (\Homol_q(\mathcal{A}_4) )^2 $}
for degree $q\ge 4$.

\subsubsection{The $E^2$-term}$\text{ }$ \\ 
In the rows with $q \ge 2$, the $E^2$-page is concentrated in the columns $p=0$ and $p=1$:
\scriptsize
$$
\begin{array}{{l|cccc}}
q=6k+8 & (\Z/2)^{2k+4}& & 0\\
q=6k+7 & (\Z/2)^{2k+2} \oplus \Z/3 & & \Z/2 \oplus \Z/3 \\
q=6k+6 & (\Z/2)^{2k+2}& & 0\\
q=6k+5 & (\Z/2)^{2k+4} \oplus \Z/3 & & \Z/2 \oplus \Z/3 \\
q=6k+4 & (\Z/2)^{2k}& & 0\\
q=6k+3 & (\Z/2)^{2k+2}\oplus \Z/3 & & \Z/2 \oplus \Z/3 \\
q=2 & \Z\oplus (\Z/2)^2 & & 0
\end{array}
$$\normalsize
Its lowest two rows are concentrated in the columns $p=0,1,2$:
$$\scriptsize
\xymatrix{
q=1&\Z^2\oplus \Z/2\oplus\Z/3&(\Z/2)^2\oplus\Z/3&0\\
q=0&\Z&\Z&\Z  \ar[ull] 
}
$$
\normalsize

\subsubsection{The $E^3=E^\infty$-term}$\text{ }$\\
For the calculation of the $d^2$-differential, we have
\small \begin{align*}
\delta(a,s,a_3,b) &= (a_3,s)+(s,a)+(a,b)+(b,a_3)\\
&= (V\cdot a,s)+(s,a)+(a,b)+(b,A \cdot a)\\
&= V \cdot (a,s)-(a,s)-(b,a)+A \cdot (b,a),\\
\\
(1\otimes\delta)(1\otimes_{(a,s,a_3,b)} 1) 
&= 1\otimes_{V \cdot (a,s)} 1 -1\otimes_{(a,s)} 1 -1\otimes_{(b,a)} 1 +1\otimes_{A \cdot (b,a)} 1\\
&=(V-1)\otimes_{(a,s)} 1+(A-1)\otimes_{(b,a)} 1\\
&=(d_\Theta\otimes 1)\left((1,V)\otimes_{(a,s)} 1+(1,A)\otimes_{(b,a)} 1 \right) \\
&=(d_\Theta\otimes 1)\left([V]\otimes_{(a,s)} 1+[A]\otimes_{(b,a)} 1 \right).
\end{align*} \normalsize
We then get \small
$$ 
(1\otimes\delta)\left([V]\otimes_{(a,s)} 1+[A]\otimes_{(b,a)} 1 \right)
= [V]\otimes_s 1 -[V]\otimes_a 1 
+[A]\otimes_a 1 -[A]\otimes_b 1. 
$$  \normalsize
As $[V]\otimes_s 1$ and $[W]\otimes_s 1$ represent the generators of the torsion-free part of $E^2_{0,1}\cong\Z^2\oplus\Z/2\oplus\Z/3$, we see that the above computed element of $E^0_{0,1}$ represents an element $\nu \in E^2_{0,1}$ of infinite order with the following property:
there is no element $\eta \in E^2_{0,1}$ with $k\eta = \nu$ for an integer $k>1$. So,
$E^3_{0,1} \cong \Z\oplus \Z/3\oplus\Z/2$ and $E^3_{2,0} = 0$.

\subsubsection{The short exact sequences}$\text{ }$\\
We thus obtain for integral homology the following short exact sequences:
$$ \scriptsize
\begin{cases}
0 \rightarrow (\Z/2)^{2k+4} \rightarrow \Homol_q(\Gamma; \thinspace \Z) \rightarrow \Z/3 \oplus \Z/2\rightarrow 0 ,\quad & \thinspace q = 6k+8
\\
0 \rightarrow (\Z/2)^{2k+2} \oplus \Z/3   \rightarrow \Homol_q(\Gamma; \thinspace \Z) \rightarrow  0
,\quad & \thinspace q = 6k + 7 
\\
 0 \rightarrow (\Z/2)^{2k+2} \rightarrow \Homol_q(\Gamma; \thinspace \Z) \rightarrow \Z/3 \oplus \Z/2 \rightarrow 0
,\quad &  \thinspace q = 6k+6, 
\\
 0 \rightarrow (\Z/2)^{2k+4} \oplus \Z/3   \rightarrow \Homol_q(\Gamma; \thinspace \Z) \rightarrow  0
,\quad &  \thinspace q = 6k +5, 
\\
 0 \rightarrow (\Z/2)^{2k} \rightarrow \Homol_q(\Gamma; \thinspace \Z) \rightarrow \Z/3 \oplus \Z/2 \rightarrow 0
,\quad &  \thinspace q = 6k+4, 
\\
 0 \rightarrow (\Z/2)^{2k+2} \oplus \Z/3   \rightarrow \Homol_q(\Gamma; \thinspace \Z) \rightarrow  0
,\quad &  \thinspace q = 6k +3, 
\\
 0 \rightarrow \Z \oplus (\Z/2)^2 \rightarrow \Homol_2(\Gamma; \thinspace \Z) 
\rightarrow \Z/3 \oplus (\Z/2)^2\rightarrow 0,
\\
 0 \rightarrow \Z\oplus \Z/3\oplus\Z/2 \rightarrow \Homol_1(\Gamma; \thinspace \Z) \rightarrow \Z \rightarrow 0.
\end{cases} \normalsize
$$

Thus, there is ambiguity similar to the case $m=10$ in the 3-torsion of the short exact sequence for $\Homol_2(\Gamma; \thinspace \Z)$ and in the 2-torsion for all even degrees. To resolve it, we compute 
\begin{center}
$\dim_{\F_2}\Homol_q(\Gamma; \thinspace  \Z/2) \cong $\scriptsize$
\begin{cases}
3, & q=1,\\
5, & q=2,\\
{4k+5}, & q = 6k+3,\\
{4k+3}, & q = 6k+4,\\
{4k+5}, & q = 6k +5,\\
{4k+7}, & q = 6k+6,\\
{4k+5}, & q = 6k +7\\
{4k+7}, & q = 6k+ 8,
\end{cases} 
$\small \hfill
\begin{tabular}{l}
$\Homol_q(\Gamma; \thinspace  \Z/3) \cong 
(\Z/3)^2
$ for all $q \geq 3$, \\ \\ and the exact sequences \\ \\
$\begin{cases}
 1 \rightarrow (\Z/2)^4   \rightarrow \Homol_3(\Gamma; \thinspace \Z/4) \rightarrow (\Z/2)^2  \rightarrow 1, \\ \\
 1 \rightarrow \Z/4 \oplus (\Z/2)^3 \rightarrow \Homol_2(\Gamma; \thinspace \Z/4) \rightarrow (\Z/2)^2\rightarrow 1.
\end{cases}$
\end{tabular}
\end{center}
\normalsize

Summarizing, we have resolved the ambiguities and obtain:
$$
\Homol_q(\text{PSL}_2(\mathcal{O}_{-6}); \thinspace \Z) \cong
\begin{cases}
\Z^2 \oplus \Z/3 \oplus \Z/2, & q=1,\\
\Z \oplus \Z/4 \oplus \Z/3 \oplus (\Z/2)^2, & q=2,\\
\Z/3 \oplus (\Z/2)^{2k+2}, &  q = 6k +3,\\
\Z/3 \oplus (\Z/2)^{2k+1}, &   q = 6k+4,\\
\Z/3 \oplus (\Z/2)^{2k+4}, &  q = 6k +5,\\
\Z/3 \oplus (\Z/2)^{2k+3}, &   q = 6k+6,\\
\Z/3 \oplus (\Z/2)^{2k+2}, &  q = 6k + 7,\\
\Z/3 \oplus (\Z/2)^{2k+5}, &   q = 6k+8, \thinspace q \ge 8.
\end{cases}
$$

\begin{rem}
For $m=6$, the check introduced in remark \ref{check} takes the following form. The abelianization is $\Gamma^{\text{ab}}\cong \langle\overline{A},\overline{R},\overline{U},\overline{W}: 2\overline{A}=0,3\overline{R}=0\rangle$. The parabolic element $U$ gives the cycle generating $\Homol_1(\Gamma \backslash X)$, while the parabolic element $W$ generates a trivial loop in the quotient space.
So it follows that $E_{0,1}^\infty \cong \Z\oplus \Z/2\oplus \Z/3$, generated by $\overline{W},\overline{A}$ and $\overline{R}$. This is consistent with the computation above.
\end{rem}

\subsection{$m=15$}

We have $\mathcal{O}_{\rationals\left[\sqrt{-15}\thinspace\right]} = \Z[\omega]$ with 
$\omega := -\frac{1}{2} +\frac{1}{2}\sqrt{-15}$. 
Writing $\Gamma:=\PSL_2(\Z[\omega])$ and 
\scriptsize $$\begin{array}{lllllllllll}
A :=& \pm { \begin{pmatrix}  & -1 \\ 1 & \end{pmatrix} },
&C :=& \pm \begin{pmatrix} 4 & -1 -2\omega \\ 1 +2\omega & 4 \end{pmatrix},
&T :=& \pm \begin{pmatrix} -3 +\omega & -3 -2\omega \\ -1 -2\omega & 4 \end{pmatrix} ,
\\ \\
U :=& \pm  \begin{pmatrix}1 & 1+\omega \\ & 1 \end{pmatrix},
&V :=& \pm { \begin{pmatrix} -1 -2\omega & 3 -\omega \\ 4 & 3 +2\omega \end{pmatrix} },
&W :=& \pm { \begin{pmatrix} -1 -2\omega & 4 \\ 4 +\omega & -1 +2\omega \end{pmatrix} },
& S :=& \pm {\begin{pmatrix} & -1 \\ 1 & 1 \end{pmatrix} },
\end{array}$$ \normalsize
\begin{wrapfigure}{r}{25mm}
\includegraphics[width=22mm]{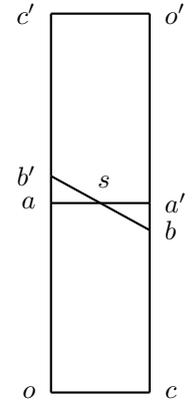}
\caption{The fundamental domain for $m=15$} \label{m15}
\end{wrapfigure}
\label{identifications15}
we have the identifications 
$U^{-1}A \cdot (o,c) = (o',c')$, \quad $T\cdot (a,b') = (a',b)$,
\mbox{$ W \cdot (s,b') = (s,b)$,} and $ V^{-1} \cdot (s,a) = (s,a') $ in the fundamental domain displayed in figure \ref{m15}.
There is no identification bet\-ween the edges $(b,c)$ and $(b',c')$, nor between the edges $(a,o)$ and $(a',o')$. Thus the quotient by the $\Gamma$-action is homeomorphic to the sum of a M\"obius band and a 2-sphere, with a disk amalgamated.
There are five orbits of vertices, labelled $o,a,b,c,s$, with stabilizers 
$$ \scriptsize
\begin{array}{llllllllll}
&\Gamma_o & = & \Gamma_a &=& \left\langle \left.
A \thinspace \right| \thinspace
A^2 = 1
\right\rangle & \cong & \Z/2,\\
&\Gamma_c & = &\Gamma_{b} &=& \left\langle \left.
S
\thinspace \right| \thinspace
S^3 = 1
\right\rangle & \cong & \Z/3,\\
& &  & \Gamma_s &=& \left\langle \left.
V,W \thinspace \right| \thinspace
VW = WV
\right\rangle & \cong & \Z^2.
\end{array}
$$ \normalsize
There are eight orbits of edges, labelled $(o,a)$, $(o',a')$,$(a,s)$, $(a,b')$, $(b,s)$, $(b,c)$, $(b',c')$ and $(o,c)$, amongst whose stabilizers  only
\\ \scriptsize
$\begin{array}{llllllllll}
&\Gamma_{(o,a)}&=& \left\langle \left.
A \thinspace \right| \thinspace
A^2 = 1
\right\rangle &=& \Gamma_o &=& \Gamma_a & \cong & \Z/2,\\
&\Gamma_{(o',a')}&=& \left\langle \left.
V^{-1}AV \thinspace \right| \thinspace
(V^{-1}AV)^2 = 1
\right\rangle &=& \Gamma_{o'} &=& \Gamma_{a'} & \cong & \Z/2,\\
&\Gamma_{(b,c)}&=& \left\langle \left.
S \thinspace \right| \thinspace
S^3 = 1
\right\rangle &=& \Gamma_b &=& \Gamma_c & \cong & \Z/3,\\
&\Gamma_{(b',c')} &=& \left\langle \left.
U^{-1}ASA^{-1}U \thinspace \right| \thinspace
(U^{-1}ASA^{-1}U )^3 = 1
\right\rangle & = & \Gamma_{b'}  & = & \Gamma_{c'} & \cong & \Z/3 \\
\end{array}
$ \normalsize \\
are nontrivial; and four orbits of faces with trivial stabilizers. 
The above data gives the $\Gamma$-equivariant Euler characteristic of $X$, in accordance with remark \ref{vanish}: 
$$ \chi_\Gamma(X) = \frac{2}{2} +\frac{2}{3} -4 -\frac{2}{2} -\frac{2}{3} +4 = 0. $$ 

\subsubsection{The bottom row of the $E^1$-term}$\text{ }$\\
We obtain in the columns $p=0,1,2$:
$$
 {\mathbb{Z}}^5
\xleftarrow{\ d^1_{1,0}\ }
 {\mathbb{Z}}^8
\xleftarrow{\ d^1_{2,0}\ } 
{\mathbb{Z}}^4
$$
where 1 is the only occurring elementary divisor of the differential matrices, with multiplicity four for $d^1_{1,0}$, and multiplicity three for $d^1_{2,0}$. 
The homology of this sequence is generated by the cycle $(o,a)+(a,b')+(b',c')+(c',o')$ in degree one and by the cycle $(a,s,b')-(a',s,b)$ in degree two. 

\subsubsection{The odd rows of the $E^1$-term}$\text{ }$\\
The maps 
$$ (\Z/2)^2 \oplus (\Z/3)^2 \xleftarrow{\ d^1_{1,q}\ } (\Z/2)^2 \oplus (\Z/3)^2 $$
for $ q \ge 3$, and 
$$\Z^2 \oplus (\Z/2)^2 \oplus (\Z/3)^2 \xleftarrow{\ d^1_{1,1}\ } (\Z/2)^2 \oplus (\Z/3)^2 $$ 
are on the 2-primary part induced by the identity maps 
\mbox{$\Gamma_{(o,a)}= \Gamma_o = \Gamma_a $} and
$\Gamma_{(o',a')}= \Gamma_{o'} = \Gamma_{a'}$. So, 
we obtain the following rank 1 matrix for the 2-primary part: $$ \scriptsize
(d^1_{1,q})_{(2)}=\begin{array}{c|cc}
& (o,a) & (o',a') 
\\
\hline
a & -1 &  -1
\\
o &  1 &   1 
\end{array} \normalsize .
$$
On the 3-primary part, they are induced by the identity maps 
\mbox{$\Gamma_{(b,c)}= \Gamma_b = \Gamma_c $} and
$\Gamma_{(b',c')}= \Gamma_{b'} = \Gamma_{c'}$. So, 
we obtain the following rank 1 matrix for the 3-primary part: $$ \scriptsize
(d^1_{1,q})_{(3)}=\begin{array}{c|cc}
& (b,c) & (b',c') 
\\
\hline
b & -1 &  -1
\\
c &  1 &   1 
\end{array} \normalsize .
$$

\subsubsection{The even rows of the $E^1$-term}$\text{ }$\\
The even rows are the zero map
to  $E^1_{0,2} \cong \Z $, and to 
\mbox{$ E^1_{0,q}  = 0 $}
for $q\ge 4$. 

\subsubsection{The $E^2$-term}$\text{ }$ \\ 
In the rows with $q \ge 2$, the $E^2$-page is concentrated in the columns $p=0$ and $p=1$:
\scriptsize
$$
\begin{array}{{l|cccc}}
q \ge 4 \quad \mathrm{ even} & 0 & & 0\\
q \ge 3 \quad \mathrm{ odd}  & \Z/2 \oplus \Z/3 & & \Z/2 \oplus \Z/3 \\
q = 2 & \Z & & 0 \\
\end{array}
$$\normalsize
Its lowest two rows are concentrated in the columns $p=0,1,2$:
$$\scriptsize
\xymatrix{
q=1&\Z^2\oplus \Z/2\oplus\Z/3& \Z/2 \oplus \Z/3 & 0\\
q=0 & \Z & \Z & \Z  \ar[ull]_{d^2_{2,0}} 
}
$$
\normalsize

\subsubsection{The $E^3=E^\infty$-term}$\text{ }$\\
For the calculation of the $d^2$-differential, we have
\scriptsize \begin{align*}
\delta\left((a,s,b')-(a',s,b)\right) &= (a,s)+(s,b')+(b',a)-(a',s)-(s,b)-(b,a')\\
 &= (a,s) +W^{-1} \cdot (s,b)+(b',a) -V^{-1} \cdot (a,s)-(s,b) -T \cdot (b', a),\\
\\
(1\otimes\delta)(1\otimes_{(a,s,b')-(a',s,b)} 1) 
&= -(V^{-1}-1)\otimes_{(a,s)}1 +(W^{-1} -1)\otimes_{(s,b)}1 -(T-1)\otimes_{(b',a)}1\\
&=(d_\Theta \otimes 1)\left(-(1,V^{-1})\otimes_{(a,s)}1 +(1,W^{-1})\otimes_{(s,b)}1 -(1,T)\otimes_{(b',a)}1 \right)\\
&=(d_\Theta \otimes 1)\left(-[V^{-1}]\otimes_{(a,s)}1 +[W^{-1}]\otimes_{(s,b)}1 -[T]\otimes_{(b',a)}1 \right).
\end{align*} \normalsize
We then get \scriptsize \begin{multline*}
1\otimes\delta\left(-[V^{-1}]\otimes_{(a,s)}1 +[W^{-1}]\otimes_{(s,b)}1 -[T]\otimes_{(b',a)}1 \right) 
= [V^{-1}]\otimes_{a}1 -[V^{-1}]\otimes_{s}1  +[W^{-1}]\otimes_{b}1 -[W^{-1}]\otimes_{s}1 +[T]\otimes_{b'}1 -[T]\otimes_{a}1. 
\end{multline*}  \normalsize
As the generators of the torsion-free part of \mbox{
$E^2_{0,1}\cong\Z^2\oplus\Z/2\oplus\Z/3$} are represented by $-[V^{-1}]\otimes_s 1$ and $-[W^{-1}]\otimes_s 1$,  we see that the above computed element of $E^0_{0,1}$ represents an element $\nu \in E^2_{0,1}$ of infinite order with the following property:
There is no element $\eta \in E^2_{0,1}$ with $k\eta = \nu$ for an integer $k>1$. So,
$E^3_{0,1} \cong \Z\oplus \Z/3\oplus\Z/2$ and $E^3_{2,0} = 0$.

\subsubsection{The short exact sequences}$\text{ }$\\
We thus obtain for integral homology the following short exact sequences:
$$ \scriptsize
\begin{cases}
 0 \rightarrow \Z/2 \oplus \Z/3 \rightarrow \Homol_q(\Gamma; \thinspace \Z) \rightarrow  0
,\quad &  \thinspace q \ge 3, 
\\
 0 \rightarrow \Z  \rightarrow \Homol_2(\Gamma; \thinspace \Z) \rightarrow \Z/2 \oplus \Z/3 \rightarrow 0,
\\
 0 \rightarrow \Z\oplus \Z/2 \oplus \Z/3 \rightarrow \Homol_1(\Gamma; \thinspace \Z) \rightarrow \Z \rightarrow 0.
\end{cases} \normalsize
$$

Thus, there is ambiguity in the 2- and 3-torsion in $\Homol_2(\Gamma; \thinspace \Z)$, similar to the cases $m=10$ and $m=6$. In order to resolve it, we only need to compute the homology with $\Z/2$- and $\Z/3$-coefficients,
$$
\Homol_q(\Gamma; \thinspace  \Z/2) \cong \scriptsize
\begin{cases}
(\Z/2)^3, & q \in\{1, 2\},\\
(\Z/2)^{2}, & q \ge 3.
\end{cases} \normalsize
\qquad 
\Homol_q(\Gamma; \thinspace  \Z/3) \cong \scriptsize
\begin{cases}
(\Z/3)^3, & q \in\{1, 2\},\\
(\Z/3)^{2}, & q \ge 3.
\end{cases} \normalsize
$$
and then use the Universal Coefficient Theorem to compare. This yields the result:
$$
\Homol_q(\text{PSL}_2(\mathcal{O}_{-15}); \thinspace \Z) \cong
\begin{cases}
\Z^2 \oplus \Z/3 \oplus \Z/2, & q=1,\\
\Z \oplus  \Z/3 \oplus \Z/2, & q=2,\\
\Z/3 \oplus \Z/2, &  q \ge 3.
\end{cases}
$$

\begin{rem}
For $m=15$, the check introduced in remark \ref{check} takes the following form. The abelianization is $\Gamma^{\text{ab}}\cong \langle\overline{AS},\overline{C},\overline{U}: 6\overline{AS}=0 \rangle$. The elements of infinite order $U$ and $C^{-1}$ give the same cycle, which generates $\Homol_1(\Gamma \backslash X)$. However, the element $U^{-1}C^{-1}$ has infinite order as well, and generates a trivial loop in the quotient space.
So it follows that $E_{0,1}^\infty \cong \Z\oplus \Z/2\oplus \Z/3$, generated by $\overline{U^{-1}C^{-1}}$ and $\overline{AS}$. This is consistent with the computation above.
\end{rem}

\bigskip
\section{Appendix: The equivariant retraction}
In this section, we give Fl\"oge's proof of the existence of a retraction $\rho$ from $\widehat{\Hy}$ to the cell complex $X^\bullet$.
We do not show the fact that $\rho$ is $\Gamma$-equivariant, which can be observed since the fibers of $\rho$ are geodesic arcs.\\
\begin{thm}[\cite{FloegePhD}*{theorem 6.6}] \label{equivariant_retraction}
$X$ is a retract of $\widehat{\Hy}$, 
i.\ e. there is a continuous map $\rho:\widehat{\Hy}\to X$ such that $\rho(p)=p$ for all $p\in X$.
\end{thm}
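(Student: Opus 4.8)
The plan is to build $\rho$ by vertical (geodesic) projection toward the cusp $\infty$, first on the distinguished piece $\widehat{B}$ and then spreading the construction over $\Gamma$, following Fl\"oge \cite{FloegePhD}. Working in the upper half-space model of $\widehat{\Hy}$, I first record a star-shapedness property of $B$: for fixed $z\in\C$, each hemisphere condition $|z-\tfrac{\lambda}{\mu}|^2+r^2\ge\tfrac{1}{|\mu|^2}$ is equivalent to $r\ge h_{\mu,\lambda}(z)$ for a suitable $h_{\mu,\lambda}(z)\ge 0$, so the fibre $B\cap(\{z\}\times\R_+^*)$ is the half-line $\{r\ge h(z)\}$ with $h(z):=\sup_{\mu,\lambda}h_{\mu,\lambda}(z)$. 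Hence $B$ is exactly the region lying above the graph $\partial B=\{(z,h(z))\}$ and below the cusp $\infty$, and the vertical ray from any point of $B$ meets $\partial B$ in the single point $(z,h(z))$. By the reduction theory of Swan \cite{Swan}, over every compact set only finitely many hemispheres contribute, so the supremum is locally a maximum, $h$ is continuous, and $\partial\widehat{B}$ is a piecewise-spherical surface; moreover $h(z)\to 0$ as $z$ approaches a singular point, where $\partial\widehat{B}$ meets the boundary plane $\C\times\{0\}$.

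This lets me define the local retraction $\rho_{\widehat{B}}:\widehat{B}\to\partial\widehat{B}$ by $(z,r)\mapsto(z,h(z))$ on $B$ and $s\mapsto s$ on the adjoined singular points. It is $\Gamma_\infty$-equivariant, since $\Gamma_\infty$ acts by translations together with rotations fixing $\infty$, which preserve the vertical fibres. Continuity on $\Hy$ is immediate from continuity of $h$, and the one delicate point is continuity at a singular point $s$. Here I argue as in the proof of Lemma \ref{Extended space contractible}: a basic neighborhood of $s$ in $\widehat{\Hy}$ is a horoball $\widehat{U}_\epsilon(s)$, and since $h(z)\to 0$ as $z\to s$, the image $(z,h(z))$ of any point of $\widehat{U}_\epsilon(s)\cap B$ again lies in a (possibly smaller) horoball at $s$; thus $\rho_{\widehat{B}}$ maps horoball neighborhoods of $s$ into horoball neighborhoods of $\rho_{\widehat{B}}(s)=s$, giving continuity in the topology of $\widehat{\Hy}$.

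To globalize, recall that $B$ collects exactly the points of maximal height in their $\Gamma$-orbit relative to $\infty$: the defining inequalities say precisely that no $\gamma=\left(\begin{smallmatrix}a&b\\c&d\end{smallmatrix}\right)$ with $c\ne 0$ raises the height $r/(|cz-d|^2+r^2|c|^2)$. Consequently the translates $\{\gamma\widehat{B}:\gamma\in\Gamma/\Gamma_\infty\}$ cover $\widehat{\Hy}$ with pairwise disjoint interiors, any two of them meeting only along a common piece of $\gamma_1\partial\widehat{B}\cap\gamma_2\partial\widehat{B}\subseteq X$. I then set $\rho|_{\gamma\widehat{B}}:=\gamma\circ\rho_{\widehat{B}}\circ\gamma^{-1}$, which retracts $\gamma\widehat{B}$ onto $\gamma\partial\widehat{B}$ and fixes it pointwise. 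On an overlap the two competing definitions are both the identity, hence agree; as the cover is closed and locally finite, the gluing lemma yields a continuous map $\rho:\widehat{\Hy}\to X$, and by construction $\rho|_X=\mathrm{id}_X$.

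The main obstacle is the behavior at the singular points, on two counts: establishing that the envelope surface $\partial\widehat{B}$ genuinely descends to each singular point (so that $s\in X$ and the vertical fibre through points near $s$ terminates at $s$), and verifying continuity of $\rho$ there against the non-metric horoball topology of $\widehat{\Hy}$ rather than the Euclidean one. Both are handled by the explicit neighborhoods $\widehat{U}_\epsilon(s)$ from the definition of $\widehat{\Hy}$, exactly as in Lemma \ref{Extended space contractible}; the remaining inputs---local finiteness of the relevant hemispheres, continuity of $h$, and the disjoint-interior tiling by the $\gamma\widehat{B}$---are the standard outputs of reduction theory.
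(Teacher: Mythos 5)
Your architecture is the same as Fl\"oge's (and the paper's): vertical projection onto the floor of the fundamental piece, extension over $\Gamma$, well-definedness on overlaps because the comparison elements are upper-triangular and hence preserve vertical half-lines (the paper cites Fl\"oge's lemma 3.4 for exactly this), and interior continuity from local finiteness (lemma \ref{6.5}). But there is a genuine gap at the singular points, and it is the one step the whole theorem turns on. From Euclidean continuity of the floor function $h$ with $h(z)\to 0$ as $z\to s$ you cannot conclude that $\rho$ maps horoball neighborhoods into horoball neighborhoods: the horoball $\widehat{U}_{\epsilon'}(s)$ of Euclidean diameter $\epsilon'$ is the set of $(z,r)$ with $(|z-s|^2+r^2)/r<\epsilon'$, so membership of the image point $(z,h(z))$ requires the \emph{quantitative} bound that $|z-s|^2/h(z)$ be small, not merely that $h(z)$ be small. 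A family of points with $h(z)=o(|z-s|^2)$ would converge to $(s,0)$ Euclideanly yet escape every fixed horoball at $s$, so ``the image lies in a possibly smaller horoball'' is not a consequence of what you proved. Ruling out this tangential behaviour is precisely the content of lemma \ref{6.4}: the low part of $\widehat{D}$ near a singular point $s'$ is contained in a truncated cone with apex $s'$ (so $|z-s'|$ is linearly bounded in terms of $r$), whence $\{(z,r)\in\widehat{D}: r<\varepsilon\}\subset \widehat{U}_{2\varepsilon}(s')$. Your appeal to the argument of lemma \ref{Extended space contractible} does not substitute for this: that proof concerns the upward flow $r\mapsto r+t(1-r)$ and openness of preimages, which is a different continuity question from controlling the images of the downward projection.

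A second, related defect is in the gluing step. The closed cover $\{\gamma\widehat{B}\}$ is \emph{not} locally finite at a singular point $s$: the stabilizer $\Gamma_s\cong\Z^2$ is infinite, so infinitely many translates $\gamma\widehat{B}$ meet every horoball $\widehat{U}_\epsilon(s)$, and the gluing lemma gives continuity of the assembled $\rho$ only at points of $\Hy$ (where $\Gamma$-normality of $D$, remark \ref{neighborhood}, and lemma \ref{6.5} provide the finiteness). At a singular $p$ one must argue directly, and this is where the paper additionally needs lemma \ref{6.3} --- small horoballs at singular points meet under $g\in\Gamma$ only if $g$ carries one singular point to the other --- together with the equivariance of horoballs, to conclude $\rho\bigl(\widehat{U}_{\epsilon/\left|cs-d\right|^2}(p)\bigr)\subset \widehat{U}_{2\epsilon}(p)$. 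You correctly name the singular points as the main obstacle, but the proposal does not actually overcome it; to repair it you must prove (or import) analogues of lemmas \ref{6.3} and \ref{6.4}, after which your construction goes through and coincides with the paper's.
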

The map $\rho$ is first defined as the orthogonal projection $\pi$ from $\widehat{B}$ to $\partial \widehat{B}$, 
and is then continued to the whole of $\widehat{\Hy}$ by $\Gamma$.
Bianchi \cite{Bianchi}
has shown that a nearly strict fundamental domain for the action of $\Gamma$ on $\Hy$ 
can be chosen in the form of a Euclidean vertical column $D$ inside $B$.
Define $$ \widehat{D} := \{ (z,r) \in \widehat{B} \quad | \quad 0\le \mathrm{Re}(z)\le 1,\quad
0\le \mathrm{Im}(z)\le \sqrt{m}\},$$
and denote by $S$ the set of singular points in $ \widehat{D}$. Finally, $ D := \widehat{D} - S $. \\
\begin{rem}[\cite{FloegePhD}, $D$ is $\Gamma$-normal] \label{neighborhood}
For every $p \in \Hy$, there exists a neighborhood $U$ of $p$ in $\Hy$ such that there are at most finitely many $g \in \Gamma$ with
$gD \cap U \neq \emptyset$.
\end{rem}
We will use the following lemmas to prove theorem \ref{equivariant_retraction}.
\begin{Lem}[\cite{FloegePhD}*{lemma 6.5}] \label{6.5}
For any subset $A \subset D$ which is closed in $\Hy$ and any $p \in \Hy$, 
there exists an open neighborhood $U_p$ of $p$ such that we have for all $g \in \Gamma$: \quad
$gA \cap U_p \neq \emptyset$ if and only if $p \in gA$.
\end{Lem}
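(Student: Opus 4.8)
The plan is to treat the two implications separately, since the forward one is essentially free and the content lies entirely in the reverse one. For any neighborhood $U_p$ of $p$ whatsoever, the implication $p \in gA \Rightarrow gA \cap U_p \neq \emptyset$ holds trivially, because then $p$ itself lies in $gA \cap U_p$. Hence it suffices to exhibit a single open neighborhood $U_p$ of $p$ for which the converse holds, namely for which $gA \cap U_p \neq \emptyset$ forces $p \in gA$, for every $g \in \Gamma$ simultaneously.

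To build such a $U_p$, I would first invoke the $\Gamma$-normality of $D$ (Remark \ref{neighborhood}) to choose an open neighborhood $U$ of $p$ meeting only finitely many translates of $D$; write $F := \{ g \in \Gamma : gD \cap U \neq \emptyset \}$ for this finite set. Since $A \subset D$ implies $gA \subset gD$ for all $g$, any translate $gA$ that meets $U$ satisfies $g \in F$, so only finitely many translates of $A$ are relevant near $p$. Because $A$ is closed in $\Hy$ and every $g$ acts on $\Hy$ as a homeomorphism, each $gA$ is closed in $\Hy$. I then split $F$ according to whether $p \in gA$ or not: for each of the finitely many $g \in F$ with $p \notin gA$, the set $\Hy \setminus gA$ is open and contains $p$, so I pick an open neighborhood $V_g$ of $p$ disjoint from $gA$, and set
$$
U_p \;:=\; U \,\cap\!\!\bigcap_{\substack{g \in F \\ p \notin gA}}\!\! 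V_g ,
$$
which is a finite intersection of open neighborhoods of $p$ and hence itself an open neighborhood of $p$.

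The verification is then routine. If $gA \cap U_p \neq \emptyset$ for some $g \in \Gamma$, then since $U_p \subset U$ and $gA \subset gD$ we get $gD \cap U \neq \emptyset$, so $g \in F$; and were $p \notin gA$, the inclusion $U_p \subset V_g$ would force $gA \cap U_p = \emptyset$, a contradiction. Thus $p \in gA$, completing the reverse implication for my chosen $U_p$.

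The step I expect to be the crux is the reduction to finitely many group elements. Without the finiteness of $F$ one could not form an open neighborhood by intersecting the separating sets $V_g$, as an arbitrary intersection of open sets need not be open; this finiteness is exactly what the $\Gamma$-normality of $D$ provides, while the inclusion $A \subset D$ is what transports it from translates of $D$ to translates of $A$, and the closedness hypothesis on $A$ is what makes each separation $p \notin gA \Rightarrow p \in \Hy \setminus gA$ by an open set legitimate.
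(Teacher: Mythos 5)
Your proposal is correct and takes essentially the same route as the paper: both invoke the $\Gamma$-normality of $D$ (remark \ref{neighborhood}) to reduce to finitely many relevant group elements, use $A \subset D$ to transfer this finiteness from translates of $D$ to translates of $A$, and then remove the finitely many closed sets $gA$ with $p \notin gA$. Your finite intersection $U \cap \bigcap_g V_g$ is, by De Morgan, just the paper's construction $U_p := U - \bigcup_{g \in \Gamma_o} gA$ in slightly different clothing.
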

\begin{proof}
By remark \ref{neighborhood}, there is a neighborhood $U$ of $p$ in $\Hy$ for which 
$\{ g \in \Gamma \thinspace | \thinspace gD \cap U \neq \emptyset \thinspace \}$ is finite.
In particular, its subset 
$$\Gamma_o := \{ g \in \Gamma \thinspace | \thinspace gA \cap U \neq \emptyset \quad \mathrm{and} \quad p \notin gA \thinspace \}$$ 
is finite. Therefore, $A$ being closed, $\bigcup\limits_{g \in \Gamma_o} gA$ is closed in $\Hy$.
Thus  $U_p := U-(\bigcup\limits_{g \in \Gamma_o} gA)$ is open in $\Hy$ and satisfies to the requested condition. 
\end{proof}
\begin{Lem}[\cite{FloegePhD}*{lemma 6.3}]  \label{6.3}
There is an $\varepsilon_0 > 0$ such that for all singular points $s,s' \in S $,
for all $\varepsilon \le \varepsilon_0 $ and $g \in \Gamma$ we have the following statement: 
\quad
$g \widehat{U}_\varepsilon(s) \cap \widehat{U}_\varepsilon(s') \neq \emptyset$ implies $gs = s'$. 
\end{Lem}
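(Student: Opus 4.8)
The plan is to translate the statement into the Euclidean geometry of horoballs in the upper half-space model of $\widehat{\Hy}$ and then to use the arithmetic of $\mathcal{O}_{-m}$ to obtain a separation bound that is \emph{uniform} in $g$. Recall from the remark following the definition of $\widehat{\Hy}$ that, apart from its tip $s$, the neighbourhood $\widehat{U}_\varepsilon(s)$ is the Euclidean ball tangent to the boundary at $s$; precisely, it is the image under $\sigma_s := \left(\begin{smallmatrix} s & 0 \\ -1 & s^{-1}\end{smallmatrix}\right)$ of the horoball $\{r>\varepsilon^{-1}\}$ based at $\infty$, and since $\sigma_s$ has lower-left entry $-1$ this ball has Euclidean diameter $\varepsilon$. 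Lifting $g$ to $\SL_2(\mathcal{O}_{-m})$ and writing $g = \left(\begin{smallmatrix} a & b \\ c & d\end{smallmatrix}\right)$, the set $g\widehat{U}_\varepsilon(s)=(g\sigma_s)\cdot\{r>\varepsilon^{-1}\}$ is again a horoball, based at $gs$, of Euclidean diameter $\varepsilon/|c'|^2$, where $c'=cs-d$ is the lower-left entry of $g\sigma_s$. I would first record the elementary fact that two open horoballs based at distinct boundary points $P\ne Q$, of diameters $d_P,d_Q$, are disjoint if and only if $d_Pd_Q\le|P-Q|^2$ (compare the Euclidean centres and radii).

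Next I would argue the contrapositive: assuming $gs\ne s'$, I must show $g\widehat{U}_\varepsilon(s)\cap\widehat{U}_\varepsilon(s')=\emptyset$ once $\varepsilon$ is small. I begin by observing that $gs\ne\infty$: the singular point $s$ is a non-principal cusp, so its $\Gamma$-orbit is disjoint from the orbit of $\infty$ (corresponding to the trivial ideal class), whence $c'\ne 0$ and $gs$ is finite. Both horoballs are then based at finite, distinct points, and since they are open away from their tips, empty interior-intersection gives empty intersection. With $d_1=\varepsilon/|c'|^2$ and $d_2=\varepsilon$, the disjointness criterion collapses to the single inequality $\varepsilon\le|c'|\,|gs-s'|$.

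The heart of the matter is to bound $|c'|\,|gs-s'|$ away from $0$ uniformly over $g$. Writing $s=s_1/s_2$ and $s'=s_1'/s_2'$ with $s_i,s_i'\in\mathcal{O}_{-m}$, a direct computation from $gs=(as-b)/(-cs+d)$ clears denominators to give $|c'|\,|gs-s'|=|N|/(|s_2|\,|s_2'|)$, where
$$ N=(as_1-bs_2)s_2'-(-cs_1+ds_2)s_1'=\det\!\left(\tilde g\tbinom{s_1}{s_2},\ \tbinom{s_1'}{s_2'}\right)\in\mathcal{O}_{-m},$$
with $\tilde g\tbinom{s_1}{s_2}=\tbinom{as_1-bs_2}{-cs_1+ds_2}$. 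The decisive point, and the step I expect to be the main obstacle, is the uniformity: as $g$ varies, $gs$ can approach $s'$ arbitrarily closely (cusps are dense in the boundary), so no naive continuity bound suffices. This near-coincidence is compensated exactly by the shrinking of the horoball diameter, and the compensation is controlled by the \emph{integrality} of $N$. Indeed $N$ is a polynomial in the $\mathcal{O}_{-m}$-entries, so $N\in\mathcal{O}_{-m}$, and $N\ne 0$ precisely because the homogeneous vectors $\tilde g\tbinom{s_1}{s_2}$ and $\tbinom{s_1'}{s_2'}$ are linearly independent, i.e. because $gs\ne s'$. Since $\mathcal{O}_{-m}$ is a lattice in $\C$, every nonzero element satisfies $|N|\ge\delta_0:=\min\{|x|:x\in\mathcal{O}_{-m},\,x\ne 0\}>0$, so that $|c'|\,|gs-s'|\ge\delta_0/(|s_2|\,|s_2'|)$ for all admissible $g$.

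Finally, because $S$ is finite, I fix once and for all a representation $s=s_1/s_2$ for each singular point in $S$ and set $\varepsilon_0:=\min_{s,s'\in S}\delta_0/(|s_2|\,|s_2'|)>0$. Then for every $\varepsilon\le\varepsilon_0$ and every $g\in\Gamma$ with $gs\ne s'$ the inequality $\varepsilon\le|c'|\,|gs-s'|$ holds, forcing $g\widehat{U}_\varepsilon(s)$ and $\widehat{U}_\varepsilon(s')$ to be disjoint. This establishes the contrapositive and hence the lemma.
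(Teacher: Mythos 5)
Your proof is correct, but there is nothing in the paper to compare it against line by line: the authors explicitly \emph{skip} the proof of this lemma, remarking that in their class-number-two cases it amounts only to discontinuity of the $\Gamma$-action on $\widehat{\Hy}$, and they defer to \cite{FloegePhD}*{lemma 6.3}. What you supply is the standard quantitative horoball-separation argument for cusps of arithmetic Kleinian groups (in the spirit of Shimizu's lemma), and its ingredients all check out. The disjointness criterion $d_P d_Q \le |P-Q|^2$ is elementary and correctly stated with $\le$, since at tangency the \emph{open} balls are still disjoint, matching the strict inequality $r>\varepsilon^{-1}$ in the definition of $\widehat{U}_\varepsilon$; your diameter-transformation rule $g\widehat{U}_\varepsilon(s)=\widehat{U}_{\varepsilon/|cs-d|^2}(gs)$ is exactly Fl\"oge's remark 5.5(a), which the paper quotes in its appendix, so this step is corroborated internally; the claim $gs\ne\infty$ follows correctly from the bijection between $\Gamma$-orbits of cusps and ideal classes, $\infty$ being principal while $s$ is singular; and the decisive uniformity in $g$ is obtained precisely where it must be, from the integrality and nonvanishing of $N=\det\bigl(\tilde g\binom{s_1}{s_2},\binom{s_1'}{s_2'}\bigr)\in\mathcal{O}_{-m}$ when $gs\ne s'$ --- indeed $N\overline{N}$ is the norm of a nonzero algebraic integer, a positive rational integer, so you may even take $\delta_0=1$ and obtain the explicit constant $\varepsilon_0=\min_{s,s'\in S}(|s_2|\,|s_2'|)^{-1}$. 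Two small points deserve a sentence each: that every singular point lies in $\rationals[\sqrt{-m}\thinspace]$ and hence admits a representation $s=s_1/s_2$ with $s_1,s_2\in\mathcal{O}_{-m}$, and that $S$, the set of singular points in the fundamental column $\widehat{D}$, is finite (finitely many singular points modulo translations) --- this finiteness is the only input behind your minimum and you use it without comment, though it is implicit throughout the paper's appendix. What your route buys over the paper's is a self-contained, effective proof valid for arbitrary class number; the paper's deferral costs it nothing, since in its computations the class number is two and the lemma is only needed as a discontinuity statement.
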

For class number two, as we obtain a fundamental domain for the action of $\Gamma$ on $\widehat{\Hy}$ 
(stricter than~$\widehat{D}$) containing just one singular point, 
this lemma states only that $\Gamma$ acts discontinuously on $\widehat{\Hy}$ 
(with respect to its topology which is finer than the subset topology of $\R^3$);
 and we skip Fl\"oge's proof which is useful for class number three or greater.
\begin{Lem}[\cite{FloegePhD}*{lemma 6.4}]  \label{6.4}
There exists an $\varepsilon_1 > 0$ with the following property: \\
If $\varepsilon \le \varepsilon_1$ and $(z,r) \in \widehat{D}$ with $r < \varepsilon$, 
then there is an $s' \in S$ such that $(z,r) \in \widehat{U}_{2\varepsilon}(s')$.
\end{Lem}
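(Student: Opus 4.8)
The plan is to translate the assertion into Euclidean geometry and then split it into a global compactness argument and a purely local estimate at each singular point. First I would record the Euclidean shape of the horoball: since $\left(\begin{smallmatrix} s & 0 \\ -1 & s^{-1}\end{smallmatrix}\right)$ has lower-left entry of modulus $1$ and carries the horoball $\{r>\varepsilon^{-1}\}$ based at $\infty$ to a horoball based at $s$, a direct computation with the height formula shows that $\widehat{U}_{2\varepsilon}(s)$ is the open Euclidean ball of diameter $2\varepsilon$ tangent to $\C$ at $s$, namely
$$\widehat{U}_{2\varepsilon}(s) = \{s\} \cup \{(z,r) \in \Hy : |z - s|^2 < r(2\varepsilon - r)\}.$$
Because $\widehat{D} \subseteq \widehat{B} = B \cup S$ and the only points of $\widehat B$ with $r=0$ are the singular points, the case $r=0$ is trivial; so it suffices to treat $(z,r) \in B\cap\widehat D$ with $0 < r < \varepsilon \le \varepsilon_1$ and to produce $s' \in S$ with $|z-s'|^2 < r(2\varepsilon - r)$. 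For this it is in fact enough to find $s'$ with $|z-s'| \le r$, since $r<\varepsilon$ forces $r^2 < r(2\varepsilon - r)$.

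For the global step I would use that $B$ is upward-closed in $r$, hence is the region above the lower-boundary function $\beta(z) := \inf\{r>0 : (z,r)\in B\}$, which by the definition of $B$ satisfies $\beta(z)^2 = \sup_{(c,d)}\bigl(|c|^{-2} - |z - d/c|^2\bigr)^+$ over coprime $c,d$ with $c\neq 0$. This $\beta$ is a supremum of continuous functions, hence lower semicontinuous, and $\beta(z)=0$ exactly when $|cz-d|\ge 1$ for all such $c,d$, i.e. precisely at the singular points. The $z$-projection of $\widehat D$ is the compact rectangle $[0,1]\times[0,\sqrt m]$, which contains only the finitely many points of $S$; deleting small open discs around them leaves a compact set on which $\beta$ is positive and lower semicontinuous, hence bounded below by some $\beta_0 > 0$. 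I then put $\varepsilon_1 := \beta_0$. For $(z,r)$ as above, $r \ge \beta(z)$ forces $\beta(z) < \beta_0$, so $z$ lies in one of the deleted discs, i.e. within a fixed small radius of some singular point $s'$.

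The local step near $s'$ is where the geometry is decisive. A point $(z,r)\in B$ lies above every hemisphere, and I restrict to the hemispheres through $(s',0)$, i.e. those with $|cs'-d|=1$, whose centres $P_i = d_i/c_i$ satisfy $|s'-P_i| = |c_i|^{-1}$. Being above $S_{c_i,d_i}$ reads $|z-P_i|^2 + r^2 \ge |s'-P_i|^2$, which expands exactly (no approximation) to $r^2 \ge -2\,\mathrm{Re}(\overline{v_i}\,\zeta) - |\zeta|^2$ with $v_i := s'-P_i$ and $\zeta := z-s'$. The input I would draw from reduction theory is that $s'$ is an isolated vertex of the floor of $B$, which forces the directions $v_i$ to positively span $\C$: there is $\kappa>0$ with $\max_i\bigl(-\mathrm{Re}(\overline{v_i}\,\zeta)\bigr) \ge \kappa\,|\zeta|$ for all $\zeta$. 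Taking the sharpest $i$ in the previous inequality then yields $r^2 \ge 2\kappa|\zeta| - |\zeta|^2 \ge |\zeta|^2$ as soon as $|\zeta|\le\kappa$, that is $|z-s'|\le r$; so after shrinking the discs of the global step to radius at most $\kappa$ the desired conclusion $(z,r)\in\widehat{U}_{2\varepsilon}(s')$ follows.

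The main obstacle is exactly this positive-spanning (isolated-vertex) property of the touching-hemisphere centres at a singular point: it is what geometrically distinguishes singular from regular cusps, and it is where Swan's reduction theory — checked explicitly in the class-number-two fundamental domains computed above — is needed. In particular one must rule out the degenerate case where the $v_i$ span only in the weak, non-strict sense, since it is precisely strict positive spanning that supplies the constant $\kappa>0$; isolation of the singular point gives the open version of the condition, and upgrading it to the closed version is the delicate point of the argument.
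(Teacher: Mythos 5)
Your proposal follows the same skeleton as the paper's (Fl\"oge's) sketch: you identify $\widehat{U}_{2\varepsilon}(s')$ as the Euclidean ball of diameter $2\varepsilon$ tangent at $s'$, reduce the assertion to the $45^\circ$-cone estimate $|z-s'|\le r$ (which is exactly the paper's containment of $\widehat{D}_{s'}$ in the truncated cone with apex $s'$), localize low points near some $s'\in S$ by a global step, and extract the cone estimate from the hemispheres through $s'$. Your implementation is more analytic than the paper's synthetic one: the lower-semicontinuous floor function $\beta$ plus compactness replaces the paper's choice of $\varepsilon_1$ as the minimal height of a non-singular vertex, and your spanning constant $\kappa$ replaces the paper's minimal cross-sectional radius $\zeta$. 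Note that the paper's single choice of $\varepsilon_1$ does double duty: it both isolates the components $\widehat{D}_{s'}$ and guarantees $\varepsilon_1\le\zeta$, because an actual edge of the fundamental polyhedron from $s'$ to a non-singular vertex lies on a semicircle of radius $\zeta$ with center at ground level, and the near branch of such an arc satisfies $\zeta-\sqrt{\zeta^2-r^2}\le r$, hence lies in the cone.

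The one genuine soft spot is the one you flag yourself, but you locate it in the wrong place. Once one knows --- and this is exactly what the paper takes from the computed fundamental polyhedron --- that below the lowest non-singular vertex the floor of $\widehat{D}$ near $s'$ consists of finitely many hemisphere pieces, all passing through $s'$, and that these cover a full punctured neighborhood of directions, your strict constant $\kappa>0$ is automatic and there is no delicate ``open-to-closed upgrade'': if the direction $u$ is covered by the cap of a touching hemisphere $i$, then coverage of $s'+tu$ for small $t>0$ reads $-t^2-2t\,\mathrm{Re}(\overline{v_i}u)>0$, forcing $\mathrm{Re}(\overline{v_i}u)<0$ strictly; so $u\mapsto\min_i \mathrm{Re}(\overline{v_i}u)$ is a finite minimum of continuous functions, negative everywhere on the compact circle of directions, hence bounded above by some $-\kappa<0$. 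The real content is therefore the local finiteness and full angular coverage by hemispheres through $s'$ --- i.e., ruling out that the floor descends to $s'$ in some direction via infinitely many small hemispheres none of which passes through $s'$. That is the reduction-theoretic input (Bianchi/Swan: the polyhedron has finitely many faces, and only faces incident to $s'$ bound $\widehat{D}_{s'}$ below height $\varepsilon_1$), which the paper's sketch likewise assumes from the explicit computations rather than proves; with it granted, your argument is complete and correct.
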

\begin{wrapfigure}{r}{55mm}
\begin{center}
\includegraphics[width=52mm]{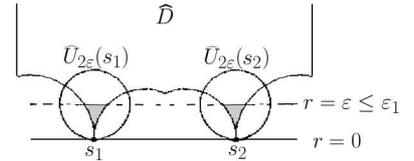} 
\caption{Fl\"oge's sketch} \label{sketch}
\end{center}
\end{wrapfigure}
Fl\"oge draws the sketch of the situation in a vertical half-plane, which we reproduce in figure \ref{sketch} with his kind permission.
He gives only some hints on the proof, which we want to make slightly more explicit here.
\begin{proof}[Sketch of proof]
We consider the Euclidean geometry of the upper-half space model for $\widehat{\Hy}$ and write coordinates in $\C \times \R^{\ge 0}$ .
Denote by $\varepsilon_1$ the ``height of the lowest non-singular vertex'', 
more precisely the minimum of the values $r>0$ occuring as the real coordinate of the non-singular vertices $(z,r) \in \Hy$ of the fundamental domain $\rho(\widehat{D})$ for $\Gamma$. 
Then $\{(z,r) \in \widehat{D} \thinspace | \thinspace r < \varepsilon_1 \}$ 
consists of one connected component for each singular point $s' \in S$. 
We will denote by $\widehat{D}_{s'}$ the connected component containing $s'$.
Now fix $s' \in S$. 
There are finitely many hemispheres limiting $\widehat{D}$ from below and touching $s'$.
We will consider the situation in a vertical half-plane containing $s'$.
The most critical  vertical half-planes for our assertion contain the intersection arc of two such hemispheres,
because the other vertical half-planes contain circle segments of $\partial \widehat{D}$ of greater radius.
The intersection of two non-identical Euclidean 2-spheres which have more than one point in common,
is a circle with center on the line segment connecting the two 2-sphere centers.
Thus the intersection of the two hemispheres mentioned above is a semicircle with center in the plane $r=0$ .
Denote by $\zeta$ the radius of this semicircle. 
Then $\varepsilon_1 \le \zeta $, because an edge of our fundamental domain, connecting $s'$ with a non-singular vertex, lies on this semicircle.
Now it is easy to see that $\widehat{D}_{s'}$  is a subset of the truncated cone 
obtained as the convex envelope of $s'$ and the horizontal disk with radius $\zeta$ and center $(s',\zeta)$.
We conclude that for all  $\varepsilon < \varepsilon_1$, $\varepsilon >0$, the set $\{(z,r) \in \widehat{D}_{s'} \thinspace | \thinspace r < \varepsilon \}$ 
is a subset of the horoball $\widehat{U}_{2\varepsilon}(s')$.
So we have seen that $\varepsilon_1$ has the property claimed in the lemma.
\end{proof}
\begin{proof}[Proof of theorem \ref{equivariant_retraction}]
For any $(z,r)\in \widehat{D}$ there is a unique $r_z\ge 0$ such that $(z,r_z)\in \widehat{D}\cap \partial \widehat{B} =: \widehat{G}$,
 in fact $r_z=\text{min }\{r':(z,r')\in \widehat{D} \}$. We can thus define the map $\pi: \widehat{D} \to \widehat{G}$ by $\pi(z,r):=(z,r_z)$. The map $\pi$ is continuous with respect to the subset topology of $\R^3$, and by \cite{FloegePhD}*{corollary 5.10} also with respect to the topology of $\widehat{\Hy}$. Furthermore, we have $\pi(p)=p$ for all $p\in \widehat{G}$.
We now extend $\pi$ to a map $\rho:\widehat{\Hy}\to X$ as follows. 
Because of \mbox{$\left\{\left(\begin{smallmatrix}1&b\\&1\end{smallmatrix}\right):b\in R\right\}\cdot \widehat{D}=\widehat{G}$,} 
we find for any $p\in\widehat{\Hy}$ a $\gamma\in\Gamma$ such that $\gamma(p)\in \widehat{D}$. 
We set \mbox{$\rho(p):=\gamma^{-1}\circ \pi\circ\gamma(p)$.}
 In order to show that this makes sense, we have to show that $p\in \gamma^{-1}\widehat{D}\cap \xi^{-1}\widehat{D}$ implies 
\mbox{$\gamma^{-1}\circ\pi\circ\gamma(p)=\xi^{-1}\circ\pi\circ\xi(p)$,}
 where \mbox{$\gamma,\xi\in\Gamma$.}
We have $\xi(p)\in \xi\gamma^{-1}\widehat{D}\cap \widehat{D}$, then 
\mbox{$\gamma\xi^{-1}(\xi(p))=\gamma(p)\in \widehat{D}\cap \gamma\xi^{-1}\widehat{D}$,}
 and either $\xi(p),\gamma(p)$ are both from $\widehat{G}$, or both from $\widehat{D}\cap B^\circ$. In the first case, it immediately follows that $\gamma^{-1}\circ\pi\circ\gamma(p)=\xi^{-1}\circ\pi\circ\xi(p)=p$, and $\xi^{-1}\circ \xi (p)=p$. In the second case, we have by \cite{FloegePhD}*{lemma 3.4} that if $\gamma\xi^{-1}=\left(\begin{smallmatrix}
a&b\\c&d
\end{smallmatrix}
\right)$, the entry $c$ must vanish. So $\gamma\xi^{-1}$ is the product $\left(\begin{smallmatrix}
a&0\\0&d
\end{smallmatrix}\right)
\left(\begin{smallmatrix}
1&db\\0&1.
\end{smallmatrix}\right)$. Both of the latter two matrices commute with $\pi$ since any such element $\zeta$ satisfies $\zeta(\partial \widehat{B})=\partial \widehat{B}$, and $\zeta$ maps vertical half-lines to vertical half-lines.\\
So we have $(\gamma\xi^{-1}\circ\pi\circ \xi\gamma^{-1})p'=\pi p'$ for all $p'\in \widehat{D}$ with $\xi\gamma^{-1}p'\in \widehat{D}$, and then it follows that 
$$\xi^{-1}\circ\pi\circ\xi(p)=\gamma\in\gamma(\xi^{-1}\circ\pi\circ\xi)\gamma^{-1}\gamma(p)=\gamma^{-1}\circ\pi\circ\gamma(p)=\gamma^{-1}\circ\pi\circ\gamma(p).$$
 Thus, $\rho$ is well-defined. Furthermore, $\pi(p)=p$ for all $p\in \widehat{G}$ implies $\rho(p)=p$ for all $p\in X.$
It remains to show that $\rho$ is continuous at any $p\in\widehat{\Hy}$.
\paragraph{1st case.} In the case $p\in\Hy$, by lemma \ref{6.5}, $p$ has an open neighborhood $U_p$ such that: for any $\gamma\in\Gamma$, we have $\gamma U_p\cap D\ne \emptyset\iff \gamma(p)\in D$. Furthermore, the set $\{\gamma\in\Gamma:\gamma(p)\in D\}$ is finite \cite{FloegePhD}*{remark 3.6}, say $\gamma_1,\dots,\gamma_n$.
Let now $V$ be an open neighborhood of $\rho(p)$. Because of the continuity of all $\gamma_i,\gamma_i^{-1}$ and the continuity of $\pi:\widehat{D}\to \widehat{G}$, there exist neighborhoods $U_i$ of $p$ such that $\gamma_i^{-1}\circ\pi\circ \gamma_i(U_i)\subset V$. Note that for all $\gamma_i$ we have $\gamma_i^{-1}\circ\pi\circ\gamma_i(p)=\rho(p)$.
Setting $U:=U_p\cap (\bigcap_{i=1}^n U_i)$, we have $\rho(U)\subset V$, i.~e. $\rho$ is continuous at the point $p$.
\paragraph{2nd case.}
In the case $p\in\widehat{\Hy}\cap\C$, let $\epsilon_0,\epsilon_1$ and $\epsilon_s$ for $s\in S$ be positive real numbers as 
in lemma \ref{6.3}, lemma \ref{6.4} and \cite{FloegePhD}*{lemma 5.9}; 
and let $\epsilon > 0$ be less than the minimum of $\frac{\epsilon_0}{2},\epsilon_1, \epsilon_s $ for $s\in S$. 
Because of 
\mbox{$\left\{\left(\begin{smallmatrix}1&b\\&1\end{smallmatrix}\right):b\in R\right\}\cdot \widehat{D}=\widehat{G}$,}
 there exist $s\in S$, 
$\xi=\left(\begin{smallmatrix}a&b \\c&d\end{smallmatrix} \right)$
 such that $\xi s=p$ and by \cite{FloegePhD}*{remark 5.5(a)},
 we have $\xi\widehat{U}_\epsilon(s)=\widehat{U}_{\frac{\epsilon}{\left|cs-d\right|^2}}(p)$. 
Let us now show that $\rho(\widehat{U}_{\frac{\epsilon}{\left|cs-d\right|^2}}(p))\subset \widehat{U}_{2\epsilon}(p)$.
Let $p'\in\widehat{U}_{\frac{\epsilon}{\left|cs-d\right|^2}}(p)$, and let $\gamma\in\Gamma$ with $\gamma p'\in \widehat{D}$. 
Then \mbox{$\rho(p')=\gamma^{-1}\circ\pi \circ \gamma(p')$.} 
By \cite{FloegePhD}*{remark 5.5(b)}, applied to $s$ and $\gamma\xi$ it follows that 
\mbox{$\gamma p'=\gamma\xi(\xi^{-1}p')\in\widehat{U}_\epsilon(\gamma\xi s)=\widehat{U}_\epsilon (\gamma p)$,}
 and by \cite{FloegePhD}*{remark 5.6} all conditions of lemma \ref{6.4} are satisfied.
 So there is an $s'\in S$ such that $\gamma p'\in\widehat{U}_{2\epsilon}(s')$. 
This means that $\gamma\xi(\widehat{U}_{2\epsilon}(s))\cap \widehat{U}_{2\epsilon}(s')\ne\emptyset$, 
and by lemma \ref{6.3} it follows that $s'=\gamma\xi s=\gamma p$.
Let us now consider $\gamma p'$ again. 
\\
 Since $\gamma p'\in\widehat{U}_{\epsilon}(\gamma p))=\widehat{U}_{\epsilon}(s')=U_\epsilon(s')$ and $\pi(U_{\epsilon}(s'))\subset U_{\epsilon}(s')$;
 and by \cite{FloegePhD}*{lemma 5.9} we have $U_\epsilon(s')\cap \widehat{B}\subset \widehat{U}_{2\epsilon}(s')$. 
So $\pi\circ\gamma p'\in\widehat{U}_{2\epsilon}(s').$ By  \cite{FloegePhD}*{remark 5.5(b)} it finally follows that
$$
\rho(p')=\gamma^{-1}\circ\pi\circ\gamma p'
\in\gamma^{-1}\widehat{U}_{2\epsilon}(s')\subset \widehat{U}_{2\epsilon}(\gamma^{-1}s')=\widehat{U}_{2\epsilon}(p),
$$
and we are done.
\end{proof}

\begin{bibdiv}
\begin{biblist}
\bib{Abels}{article}{
   author={Abels, Herbert},
   title={Generators and relations for groups of homeomorphisms},
   conference={
      title={Transformation groups (Proc. Conf., Univ. Newcastle upon Tyne,
	1976)},
   },
   book={
      publisher={Cambridge Univ. Press},
      place={Cambridge},
   },
   date={1977},
   pages={3--20. London Math. Soc. Lecture Note Series, No. \textbf{26}},
   review={\MR{\textbf{0474339} (\textbf{57} \#13985)}},
   review={Zbl 0344.57014} 
}
\bib{Pari}{book}{
  author =  {Allombert,Bill},
  author = {Belabas,Karim},
  author = {Cohen, Henri },
  author = { Roblot, Xavier},
  author = {Zakharevitch,Ilya},
  author = {the PARI group},
  title =   {PARI/GP},
  address = {specialized computer algebra system. Freely available at
             \url{http://pari.math.u-bordeaux.fr/} },
  year =    {2008}
}
\bib{Ash}{article}{
   author={Ash, Avner},
   title={Deformation retracts with lowest possible dimension of arithmetic
   quotients of self-adjoint homogeneous cones},
   journal={Math. Ann.},
   volume={225},
   date={1977},
   number={1},
   pages={69--76},
   issn={0025-5831},
   review={\MR{0427490 (55 \#522)}},
   review={Zbl 0343.20026}, 
}
\bib{Bass}{article}{
   author={Bass, Hyman},
   title={Euler characteristics and characters of discrete groups},
   journal={Invent. Math.},
   volume={35},
   date={1976},
   pages={155--196},
   issn={0020-9910},
   review={\MR{0432781 (55 \#5764)}},
}

\bib{Berkove}{article}{
   author={Berkove, Ethan},
   title={The integral cohomology of the Bianchi groups},
   journal={Trans. Amer. Math. Soc.},
   volume={358},
   date={2006},
   number={3},
   pages={1033--1049 (electronic)},
   ISSN={0002-9947},
   review={\MR{2187644 (2006h:20073)}},
   review={Zbl pre02237880 }
}
\bib{Bianchi}{article}{
   author={Bianchi, Luigi},
   title={Sui gruppi di sostituzioni lineari con coefficienti appartenenti a
   corpi quadratici immaginar\^{\i}},
   language={Italian},
   journal={Math. Ann.},
   volume={40},
   date={1892},
   number={3},
   pages={332--412},
   ISSN={0025-5831},
   review={\MR{\textbf{1510727}}},
   review={JFM 24.0188.02}
}
\bib{Brown}{book}{
   author={Brown, Kenneth S.},
   title={Cohomology of groups},
   series={Graduate Texts in Mathematics},
   volume={\textbf{87}},
   publisher={Springer-Verlag},
   place={New York},
   date={1982},
   pages={x+306},
   ISBN={0-387-90688-6},
   review={\MR{672956 (83k:20002)}},
   review={ Zbl 0584.20036 }
}
\bib{Elstrodt}{book}{
   author={Elstrodt, J\"urgen},
   author={Grunewald, Fritz},
   author={Mennicke, Jens},
   title={Groups acting on hyperbolic space},
   series={Springer Monographs in Mathematics},
   publisher={Springer-Verlag},
   place={Berlin},
   date={1998},
   pages={xvi+524},
   ISBN={3-540-62745-6},
   review={\MR{1483315 (98g:11058)}},
   review={Zbl 0888.11001 }
}
\bib{Fine}{book}{
   author={Fine, Benjamin},
   title={Algebraic theory of the Bianchi groups},
   series={Monographs and Textbooks in Pure and Applied Mathematics},
   volume={\textbf{129}},
   publisher={Marcel Dekker Inc.},
   place={New York},
   date={1989},
   pages={viii+249},
   ISBN={0-8247-8192-9},
   review={\MR{1010229 (90h:20002)}},
   review={Zbl 0760.20014 }
}
\bib{Floege}{article}{
   author={Fl{\"o}ge, Dieter},
   title={Zur Struktur der ${\rm PSL}\sb{2}$ \"uber einigen
   imagin\"ar-quadratischen Zahlringen},
   language={German},
   journal={Math. Z.},
   volume={183},
   date={1983},
   number={2},
   pages={255--279},
   ISSN={0025-5874},
   review={\MR{704107 (85c:11043)}},
   review={Zbl 0498.20036 }
}
\bib{FloegePhD}{book}{
   author={Fl{\"o}ge, Dieter},
   title={Dissertation: Zur Struktur der ${\rm PSL}\sb{2}$ \"uber einigen
   imagin\"ar-quadratischen Zahlringen},
   language={German},
   publisher={Johann-Wolfgang-Goethe-Universit\"at, Fachbereich Mathematik},
   place={Frankfurt am Main},
   date={1980},
   pages={x+92},
   review={ Zbl 0482.20032 }
}

\bib{Gunnells}{article}{
   author={Gunnells, Paul E.},
   title={Modular symbols for ${\bf Q}$-rank one groups and Vorono\u\i\
   reduction},
   journal={J. Number Theory},
   volume={75},
   date={1999},
   number={2},
   pages={198--219},
   issn={0022-314X},
   review={\MR{1681629 (2000c:11084)}},
   review={Zbl 0977.11023},
}
\bib{Harder}{article}{
    author={Harder, G\"unter},
    title={A Gauss-Bonnet formula for discrete arithmetically defined groups},
    journal={Ann. Sci. \'Ecole Norm. Sup. (4)},
    volume={4},
    date={1971},
    pages={409--455},
    ISSN={0012-9593},
    review={\MR{\textbf{0309145} (\textbf{46} \#8255)}},
    review={Zbl 0232.20088 }
}


\bib{Klein}{book}{
   author={Klein, Felix},
   title={Vorlesungen \"uber die Theorie der elliptischen Modulfunktionen.
   Band I: Grundlegung der Theorie},
   language={German},
   series={Ausgearbeitet und vervollst\"andigt von Robert Fricke. Nachdruck
   der ersten Auflage. Bibliotheca Mathematica Teubneriana, Band 10},
   publisher={Johnson Reprint Corp.},
   place={New York},
   date={1966},
   pages={Vol. I: xix+764},
   review={\MR{0247996 (40 \#1254a)}},
}
\bib{Macl}{book}{
   author={Maclachlan, Colin},
   author={Reid, Alan W.},
   title={The arithmetic of hyperbolic 3-manifolds},
   series={Graduate Texts in Mathematics},
   volume={\textbf{219}},
   publisher={Springer-Verlag},
   place={New York},
   date={2003},
   pages={xiv+463},
   ISBN={0-387-98386-4},
   review={\MR{1937957 (2004i:57021)}},
   review={Zbl 1025.57001} 
}
\bib{Mendoza}{book}{
   author={Mendoza, Eduardo R.},
   title={Cohomology of ${\rm PGL}\sb{2}$ over imaginary quadratic integers},
   series={Bonner Mathematische Schriften [Bonn Mathematical Publications],
   \textbf{128}},
   publisher={Dissertation, Rheinische Friedrich-Wilhelms-Universit\"at, Mathematisches Institut},
   place={Bonn},
   date={1979},
   pages={vi+83},
   review={\MR{611515 (82g:22012)}},
   review={Zbl 0464.12005 }
}
\bib{Narkiewicz}{book}{
author={Narkiewicz, W\l adys\l aw},
title={Elementary and analytic theory of algebraic numbers. 3rd ed.},
    SERIES = {Springer Monographs in Mathematics},
 PUBLISHER = {Springer-Verlag},
   ADDRESS = {Berlin},
      YEAR = {2004},
     PAGES = {xii+708},
      ISBN = {3-540-21902-1},
     review= {\MR{\textbf{2078267} (\textbf{2005c}:11131)}},
     review={Zbl 0717.11045 }
}

\bib{Poincare}{article}{
   author={Poincar{\'e}, H.},
   title={M\'emoire},
   language={French},
   note={Les groupes klein\'eens},
   journal={Acta Math.},
   volume={3},
   date={1966},
   number={1},
   pages={49--92},
   issn={0001-5962},
   review={\MR{1554613}},
}
\bib{RahmPhDthesis}{book}{
   author={Rahm, Alexander},
   title={PhD thesis},
   publisher={Institut Fourier, Grenoble},
   date={to appear 2010},
}
\bib{SchwermerVogtmann}{article}{
   author={Schwermer, Joachim},
   author={Vogtmann, Karen},
   title={The integral homology of ${\rm SL}\sb{2}$ and ${\rm PSL}\sb{2}$ of
   Euclidean imaginary quadratic integers},
   journal={Comment. Math. Helv.},
   volume={58},
   date={1983},
   number={4},
   pages={573--598},
   ISSN={0010-2571},
   review={\MR{728453 (86d:11046)}},
   review={Zbl 0545.20031 }
}
\bib{Serre}{article}{
   author={Serre, Jean-Pierre},
   title={Cohomologie des groupes discrets},
   language={French},
   conference={
      title={Prospects in mathematics (Proc. Sympos., Princeton Univ.,
      Princeton, N.J., 1970)},
   },
   book={
      publisher={Princeton Univ. Press},
      place={Princeton, N.J.},
   },
   date={1971},
   pages={77--169. Ann. of Math. Studies, No. \textbf{70}},
   review={\MR{\textbf{0385006} (\textbf{52} \#5876)}},
   review={Zbl 0235.22020 }
}
\bib{SerreA}{article} {
    AUTHOR = {Serre, Jean-Pierre},
     TITLE = {Le probl\`eme des groupes de congruence pour {SL}2},
   JOURNAL = {Ann. of Math. (2)},
    VOLUME = {92},
      YEAR = {1970},
     PAGES = {489--527},
      ISSN = {0003-486X},
    review = { \MR{ \textbf{0272790} (\textbf{42} \#7671)}},
    review = { Zbl 0239.20063}
}
\bib{Swan}{article}{
   author={Swan, Richard G.},
   title={Generators and relations for certain special linear groups},
   journal={Advances in Math.},
   volume={6},
   date={1971},
   pages={1--77 (1971)},
   ISSN={0001-8708},
   review={\MR{\textbf{0284516} (\textbf{44} \#1741)}},
   review={Zbl 0221.20060 }
}
\bib{Vogtmann}{article}{
   author={Vogtmann, Karen},
   title={Rational homology of Bianchi groups},
   journal={Math. Ann.},
   volume={272},
   date={1985},
   number={3},
   pages={399--419},
   ISSN={0025-5831},
   review={\MR{799670 (87a:22025)}},
   review={Zbl 0545.20031 }
}

\bib{Weibel}{book}{
   author={Weibel, Charles A.},
   title={An introduction to homological algebra},
   series={Cambridge Studies in Advanced Mathematics},
   volume={\textbf{38}},
   publisher={Cambridge University Press},
   place={Cambridge},
   date={1994},
   pages={xiv+450},
   ISBN={0-521-43500-5},
   ISBN={0-521-55987-1},
   review={\MR{1269324 (95f:18001)}},
   review={Zbl 0797.18001 }
}	
\bib{Yasaki}{article}{
   author={Yasaki, Dan},
   title={Hyperbolic tessellations associated to Bianchi groups},
   editor = {Guillaume Hanrot and {Fran\c cois} Morain and Emmanuel {Thom\'e}},
    booktitle = {Algorithmic Number Theory},
    note = {9th International Symposium, Nancy, France, ANTS-IX, July 19-23, 2010, Proceedings},
    series = {Lecture Notes in Comput. Sci.},
    volume = {6197},
    publisher = {Springer--Verlag},
}
\end{biblist}
\end{bibdiv}
\end{document}